\documentclass{elsarticle}
\usepackage[english]{babel}
\usepackage{amsmath}
\usepackage{amssymb}
\usepackage{amsfonts}
\usepackage{amsthm}
\usepackage[usenames]{color}
\usepackage{mathtools}
\usepackage{graphicx}
\usepackage{pgfplots}
\usepackage{lineno,hyperref}
\usepackage{enumerate}
\usepackage{colortbl}
\usepackage[title]{appendix}
\usepackage{tikz}
\usepackage[e]{esvect}
\usepackage{algorithm}
\usepackage{algpseudocodex}
\usepackage{etoolbox}
\usepackage{ragged2e}
\usepackage{booktabs}

\newcolumntype{^}{>{\currentrowstyle}}

\journal{arXiv}
\newtheorem{lemma}{Lemma}

\newtheorem{theorem}{Theorem}

\newtheorem{remark}{Remark}

\pgfplotsset{compat=1.18}

\allowdisplaybreaks

\begin{document}

\makeatletter
\patchcmd{\keyword}{\raggedright}{\justifying}{}{\typeout{keyword patch failed}}
\makeatother


\renewcommand{\abstractname}{Abstract}
\renewcommand{\refname}{References}
\renewcommand{\tablename}{Table}
\renewcommand{\arraystretch}{0.9}

\newcommand{\R}[0]{\mathbb{R}}
\newcommand{\N}[0]{\mathbb{N}}
\newcommand{\phantp}{\mathbin{\phantom{+}}}
\newcommand{\phante}{\mathbin{\phantom{=}}}
\newcommand{\prob}{\mathcal{P}}
\newcommand{\probb}{\mathcal{P}}
\newcommand{\elems}{\mathcal{E}}
\newcommand{\E}{\mathcal{E}}
\newcommand{\lowtol}{l'_\prob}
\newcommand{\lowtolb}{l'_{\prob'}}
\newcommand{\uptol}{u'_\prob}
\newcommand{\uptolb}{u'_{\prob'}}

\definecolor{ellcolor}{HTML}{E69F00}
\definecolor{illcolor}{HTML}{0072B2}
\definecolor{tllcolor}{HTML}{009E73}

\newcommand{\COMG}[1]{{\color{orange}COMMENT GEROLD: #1}}
\newcommand{\COMD}[1]{{\color{magenta}COMMENT DMITRII: #1}}

\setlength{\abovedisplayskip}{2pt} 
\setlength{\belowdisplayskip}{2pt}
\setlength{\abovedisplayshortskip}{2pt}
\setlength{\belowdisplayshortskip}{2pt}

\sloppy

\begin{frontmatter}
\title{Algorithms for Computing Set Tolerances: Theory, Computational Analysis and Applications}

\author[01]{Gerold J\"ager}
\ead{gerold.jager@umu.se}

\author[01]{Dmitrii Panasenko}
\ead{makare95@mail.ru}

\address[01] {Department of Mathematics and Mathematical Statistics,\\ Ume\aa~University, SE-90187 Ume\aa, Sweden}

\begin{abstract}
The regular set tolerance is an important term in sensitivity analysis. 
For combinatorial sum problems,
e.g., the Traveling Salesman Problem,
Shortest Path Problem and Minimum Spanning Tree Problem,
it determines how much the sum of the costs of the
elements of a set can be increased while ensuring that all current optimal
solutions remain optimal. 
The regular set lower tolerance determines how much the sum of the costs of the
elements of a set can be decreased while ensuring that the objective value of
the optimal solution is not changed. 
We investigate general methods for computing regular (upper and lower) set tolerances in combinatorial sum problems. 
For the  upper tolerance, we present
a linear programming approach,
and for the lower tolerances, three
linear programming approaches, 
where the last two are novel  and
lead to recursive procedures
for computation of the lower tolerances of all subsets
of the given ground set.
We give new upper bounds
for set lower tolerances.
For both upper and lower tolerances, 
we give an exact formula for sets of cardinality~$2$ and~$3$. 
Furthermore, we computationally
compare and analyze the three 
different lower tolerance LPs.
Finally, we consider the computation of tolerances for the Minimum Spanning Tree Problem, give
a formula for single tolerances,
a lower bound for regular set upper tolerances
and an exact formula for regular set lower tolerances.
\end{abstract}

\begin{keyword}
Combinatorial Optimization \sep
Sensitivity Analysis \sep
Single Tolerance \sep
Set Tolerance \sep
Minimum Spanning Tree Problem
\end{keyword}
\end{frontmatter}

\section{Introduction}\label{sec_intro}

The theory of single tolerances is a branch
of sensitivity analysis~\cite{Cac03} 
and investigates
the stability of elements/sets in
optimal solutions of 
combinatorial minimization problems (CMPs).
More concretely, Goldengorin, J\"ager
and Molitor~\cite{GJM06} described
the notion of \emph{single upper/lower tolerance} of an element inside/outside an optimal solution as the maximum
cost increase/decrease of 
an element such that
this optimal solution remains
optimal.
This theory was created for
three different  types of CMPs, namely
those, where the sum/product/bottleneck of costs 
of all elements of a 
solution is minimized. 
This work was later extended to sets.
In~\cite{JT18}, the extension of single to set tolerances was motivated,
in particular \emph{regular set upper/lower tolerances} were introduced,
where the tolerance value is the maximum sum
of cost increases/decreases of a set of elements inside/outside a given optimal
solution
such that this 
optimal solution remains
optimal.
In~\cite{JT24}, these
definitions were extended
to \emph{reverse set upper/lower tolerances},
where the tolerance value is the infimum sum
of cost increases/decreases of a set of elements inside/outside a given optimal
solution such that this 
optimal solution does not remain optimal.

The drawback of all those works
was that the definitions
do not apply to all
elements/subsets of the ground set and depend
on a given optimal solution. 
Thus, in a recent work,
J\"ager and Turkensteen~\cite{JT26} extended these definitions so that they no
longer have such drawbacks.
These new definitions 
have already been
applied to single tolerances of 
the bottleneck versions of the Linear Assignment Problem (LAP) and of the Shortest Path Problem (SPP)~\cite{JT25}.

The theory of tolerances has been
utilized to create and improve
heuristics and exact approaches,
for example, 
for the (Symmetric) Traveling Salesman Problem~\cite{DJRM09,Hel00,JDGMR14}
and the Asymmetric Traveling Salesman Problem~\cite{GGT12,GGGJ08}.

In this work, we investigate 
regular set upper/lower tolerances,
as defined in the recent work~\cite{JT26}
as follows. 
The \emph{(regular) set
upper tolerance} is 
defined for
each subset $E$ of the ground
set as the maximum sum of
cost increases of $E$
so that \emph{each} optimal
solution remains optimal.
The \emph{(regular)
set lower tolerance} is defined for
each subset $E$ of the ground
set as the maximum sum
of cost decreases of $E$
so that the objective value
remains unchanged.
Note that, as motivated 
in~\cite{JT26} in detail, these definitions
are asymmetric.
We restrict our attention to combinatorial sum problems (CSPs), 
which encompass many classical optimization problems.

We apply our results to the Minimum Spanning Tree Problem (MSTP) which is the problem
of finding 
a cycle-free connected subgraph
with smallest cost
in an undirected weighted 
graph~\cite{CCPS97,T82}.
For MSTP, several algorithms for computing
single tolerances have been proposed. In 1977, Chin and Houck~\cite{CH78}
introduced a recursive method for computing single upper tolerances in
$O(n^2)$ time, where $n$ is the number of vertices. In 2000, 
Helsgaun~\cite{Hel00} presented a method for computing single lower tolerances
with the same complexity.
For an overview of different algorithms for MSTP, we refer to~\cite[Section 5]{TMGP17}. 
While several researchers worked on computation of single tolerances for MSTP, to
our knowledge, set tolerances have not been investigated 
for this problem. 

This work can be summarized as follows. 
In Section~\ref{sec2}, we start with some notations, definitions and previous
results, both for tolerances
and for MSTP. This includes
an LP for computing set upper
tolerances and an LP for
computing set lower tolerances.
In Section~\ref{sec3}, we 
prove the correctness of the LP for set upper tolerances from Section~\ref{sec2} and present formulas 
for sets of
cardinality~$2$~and~$3$. 
In Section~\ref{sec4}, 
for  regular set  lower tolerances,
we present two further novel LPs and prove the correctness of all three LPs. 
Note that these novel LPs can be utilized to recursively compute regular set lower tolerances 
of all subsets of the ground set.
Furthermore, we present formulas for sets of cardinality~$2$~and~$3$ and three upper bounds for regular set lower tolerances.
In Section~\ref{sec_comput}, we computationally compare and analyze the performance
of these three LPs for computing regular set lower tolerances for all 
subsets of a given ground set for three CSPs,
namely MSTP, 
SPP and LAP.
We observe that for these
instances the two recursive LP approaches are much faster than the LP from~\cite{JT26}.
In Section~\ref{sec5}, we apply
our theoretical results
to MSTP. This includes
formulas for single upper/lower tolerances,
a lower bound for regular set 
upper tolerances and
an exact formula for regular set lower tolerances. 
Finally, in Section~\ref{sec6}, we conclude and give
suggestions for future work.

\section{Notations, Definitions and Previous Results}
\label{sec2} 

\subsection{Combinatorial Minimization Problems}\label{sec2_1}

A CMP $\prob$ is given by a tuple $(\elems, D, c,
f_c)$ where $\elems$ is a finite ground set of elements, $D \subseteq 2^\elems
\setminus \{\emptyset\}$ is the set of feasible solutions, $c : \elems
\rightarrow \R$ is the cost function, which assigns costs to each single element
of $\elems$, $f_c : D \rightarrow \R$ is the objective (cost) function, which
depends on the function $c$ and assigns costs to each feasible solution from $D$. In this paper we
consider CSPs, i.e., CMPs with the objective function $f_c$ such that $\forall\, S \in D : f_c(S) =
\sum_{e \in S} c(e)$ holds.

A subset $S^\ast \subseteq \elems$ is called an optimal solution of $\prob$ if
$S^\ast$ is a feasible solution and the cost $f_c(S^\ast)$ of $S^\ast$ is
minimum, i.e., $S^\ast \in D$ and $f_c(S^\ast) = \min \{f_c(S) \,|\, S \in
D\}$. We denote the cost of an optimal solution $S^\ast$ of $\prob$ by $c^\ast
= f_c(\prob)$. 
For $M \subseteq D$, we denote the cost of the best solution included in $M$ by
$f_c(M)$. 
The cost $f_c(M)$ for $M = \emptyset$ is defined as infinity.

We denote the set of feasible solutions such that each of them does not
contain $E \subseteq \elems$ by $D_{-}(E)$,
i.e., $D_{-}(E) = \{ S \in D
\,|\, E \cap S = \emptyset \}$. 
Analogously, we denote the set of feasible
solutions 
such that each of them contains $E \subseteq \elems$ by
$D_{+}(E)$ 
i.e., $D_{+}(E) = \{ S \in D \,|\, E \subseteq S\}$. 
We also denote
the set of feasible solutions 
such that each of them does not contain
$E \subseteq \elems$ and contains $E' \subseteq \elems$ by
$D_{\sim}(E; E')$. 
i.e., $D_{\sim}(E; E') = \{ S \in D \,|\, E \cap S =
\emptyset, E' \subseteq S \}$.

For $ k \in \N $ we define $[k] := \{1,2,\dots,k\} $.

Let $\prob = (\elems, D, c,f_c) $ be a CSP, 
$ E = \{e_1, e_2, \ldots, e_k\} \subseteq E$ and $\vv{\alpha} = (\alpha_1, \alpha_2, \ldots, \alpha_k) \in \R^k$. We obtain a
new CSP if for all 
$i \in [k]$ 
we add $\alpha_i \in \R$ to the cost of $e_i $. We denote the new problem by 
$\prob_{c_{\vv{\alpha}, E}} =
(\elems, D, c_{\vv{\alpha}, E}, f_{c_{\vv{\alpha}, E}})$. 

In the following, for $ E = \{e\} $ we use the notation $D_{-}(e), D_{+}(e),
\prob_{c_{\alpha, e}}$ and $f_{c_{\alpha, e}}$. 
Also for simplicity, we often
omit the set brackets, e.g., 
we write 
$ D_{-}(e_1, e_2, \ldots, e_k)$ 
instead of  $ D_{-}(\{e_1, e_2, \ldots, e_k\})$.

\subsection{Upper Tolerances}\label{sec2_2}

We use the generalized definition of the regular set upper tolerance from~\cite{JT26}.
As in this work
we consider only regular
set upper tolerances and not
reverse set upper
tolerances as in~\cite{JT24},
in the following
we omit the term
``regular''.

Let $\prob = (\elems, D, c,f_c)$ be a CSP and $e \in \elems$. Then the \emph{single upper tolerance}
$\uptol(e)$ is defined~as 
\begin{align*}
\uptol(e) := \max \big\{ \alpha \in \R_0^{+}  \,\big|\, \text{each optimal solution
} S^\ast\ \text{for } \prob \text{ is optimal for } \prob_{c_{{\alpha}, e}}
\big\}.
\end{align*}

The following remark describes the original
definition of single upper
tolerance.

\begin{remark}[{\cite[Section 2.2]{JT26}}] 
\label{upp_tol_old_def}
Let $\prob = (\elems, D, c,f_c) $ be a CSP, 
$S^\ast$ be an optimal solution of $\prob$ and $ e \in S^\ast$. 
Then it holds that
{
\setlength{\abovedisplayskip}{1pt} 
\setlength{\belowdisplayskip}{1pt}
\setlength{\abovedisplayshortskip}{1pt}
\setlength{\belowdisplayshortskip}{1pt}
\begin{align*}
\uptol(e) := \max \big\{ \alpha \in \R_0^{+}  \,\big|\, S^\ast \text{ is optimal
for } \prob_{c_{{\alpha}, e}} \big\}.
\end{align*}
}
\end{remark}

The following theorem
gives a formula for the single upper tolerance.

\begin{theorem}[{\cite[Proposition 1(a)-(c)]{JT26}}] 
\label{single_upp}
Let $\prob = (\elems, D, c, f_c)$ be a CSP and
$e \in \elems$. If $f_c(D_{+}(e)) = c^\ast$, then $\uptol(e) = f_c(D_{-}(e)) - c^\ast$ holds. Otherwise, $\uptol(e) = \infty$ holds.
\end{theorem}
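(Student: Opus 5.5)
We argue directly from the definition of $\uptol(e)$ and split into the two cases of the statement. The key observation is that increasing the cost of $e$ by $\alpha \ge 0$ changes the cost of a solution $S$ by exactly $\alpha$ if $e \in S$ and leaves it unchanged otherwise. Hence
\begin{align*}
f_{c_{\alpha,e}}(D_{+}(e)) = f_c(D_{+}(e)) + \alpha, \qquad f_{c_{\alpha,e}}(D_{-}(e)) = f_c(D_{-}(e)).
\end{align*}
Since $D = D_{+}(e) \cup D_{-}(e)$ is a disjoint union, the optimal value of $\prob_{c_{\alpha,e}}$ is the minimum of these two quantities, with the convention that a minimum over the empty set is $\infty$.

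\emph{Case $f_c(D_{+}(e)) > c^\ast$.} Then no optimal solution contains $e$. Every optimal solution $S^\ast$ therefore lies in $D_{-}(e)$, and its cost stays $c^\ast$ for every $\alpha$. Every other solution either keeps its cost, which is at least $c^\ast$, or has its cost increased by $\alpha \ge 0$. So $S^\ast$ stays optimal for all $\alpha \ge 0$, the set in the definition is all of $\R_0^{+}$, and $\uptol(e)=\infty$. Here the ``max'' is read as a supremum.

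\emph{Case $f_c(D_{+}(e)) = c^\ast$.} Put $\beta := f_c(D_{-}(e)) - c^\ast \ge 0$, which may be $\infty$ if $D_{-}(e)=\emptyset$. We must show that the set of admissible $\alpha$ is exactly $[0,\beta]$ (or $[0,\infty)$ when $\beta=\infty$).
\begin{itemize}
\item Take $\alpha \le \beta$. An optimal solution containing $e$ costs $c^\ast+\alpha$, which is the new minimum, because solutions in $D_{+}(e)$ cost at least $c^\ast+\alpha$ and solutions in $D_{-}(e)$ cost at least $c^\ast+\beta$.
\item Optimal solutions not containing $e$ exist only if $\beta=0$. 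In that case $\alpha\le\beta$ forces $\alpha=0$, which is trivially admissible.
\item Take $\alpha > \beta$, with $\beta$ finite. Any optimal $S^\ast \ni e$, which exists by the case hypothesis, now costs $c^\ast+\alpha > f_c(D_{-}(e))$, the cost of some feasible solution. So $S^\ast$ is no longer optimal.
\end{itemize}
Hence the maximum is attained at $\alpha=\beta$, which gives $\uptol(e)=f_c(D_{-}(e))-c^\ast$.

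The main subtlety is the bookkeeping in the second case. When $\beta=0$, optimal solutions exist both with and without $e$, and the argument must confirm that only $\alpha=0$ is admissible. When $D_{-}(e)=\emptyset$, the formula yields $\infty$ via the convention $f_c(\emptyset)=\infty$, and the maximum must then be understood as a supremum. Beyond these edge cases the argument is a direct check of the definition.
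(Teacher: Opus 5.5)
Your proof is correct and complete. You split $D$ into $D_{+}(e)$ and $D_{-}(e)$ and observe that only the first part is shifted by $\alpha$. You then check which $\alpha$ are admissible on either side of $\beta = f_c(D_{-}(e)) - c^\ast$. You also handle the degenerate cases properly: $\beta = 0$, where optimal solutions with and without $e$ coexist and only $\alpha = 0$ is admissible, and $D_{-}(e) = \emptyset$, where the supremum is $\infty$.

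The paper does not prove this statement itself; it imports it from~\cite{JT26}, so there is no in-paper argument to match. The closest route available within the paper is to specialize the EUL of Theorem~\ref{teo_upp} to $k=1$:
\begin{itemize}
\item If $e \in S^\ast$, the only nontrivial constraint is $\alpha = 0$ when $f_c(D_{-}(e)) = c^\ast$, and $c^\ast + \alpha \le f_c(D_{-}(e))$ otherwise.
\item If $f_c(D_{+}(e)) > c^\ast$, every constraint is trivially satisfied, so the LP is unbounded.
\end{itemize}
Your direct argument from the definition is more elementary. It avoids the LP machinery, the dependence of EUL on a fixed optimal solution $S^\ast$, and the somewhat informal reduction to the instance $\prob'$ used in the proof of Theorem~\ref{teo_upp}.
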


Let $\prob = (\elems, D, c, f_c)$ be a CSP and $E = \{e_1, e_2, \ldots, e_k\} \subseteq \elems$.
Define the \emph{set upper tolerance} $\uptol(E)$ of $E$ as
\begin{align*}
\uptol(E) = 
\max \left\{ \alpha \in \R \;\middle|\;
    \begin{aligned}
        &\text{for each optimal solution } S^\ast \text{ of } \prob \\ 
        & \exists\; \vv{\alpha} = (\alpha_1, \alpha_2, \ldots, \alpha_k), \alpha_1, \alpha_2, \ldots, \alpha_k \geq 0, \\
        &\alpha = \sum_{i=1}^{k} \alpha_i,\; 
        S^\ast \text{ is optimal for } \prob_{c_{\vv{\alpha}, E}}
    \end{aligned}
\right\}.
\end{align*}

The following theorem
gives a lower bound for the set upper tolerance.

\begin{theorem}[{\cite[Theorem 1(a)]{JT26}}] \label{bounds_upp}
Let $\prob = (\elems, D, c, f_c)$ be a CSP and $E =\{e_1,e_2,\dots,e_k\} \subseteq \elems$. Then it holds that 
$\displaystyle \max^k_{i=1} \big\{\uptol(e_i) \big\} \leq \uptol(E)$. 
\end{theorem}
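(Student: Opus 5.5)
The plan is to show that each single tolerance is a feasible value in the set defining $\uptol(E)$: put the entire increase on one element and zero on the others. Fix $i \in [k]$ and set $\alpha := \uptol(e_i)$.

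\emph{Finite case.} If $\uptol(e_i) < \infty$, take $\vv{\alpha} = (0,\dots,0,\alpha,0,\dots,0)$ with $\alpha$ in position $i$. Then $\prob_{c_{\vv{\alpha},E}} = \prob_{c_{\alpha,e_i}}$. By the definition of $\uptol(e_i)$ (a maximum, so attained), every optimal solution $S^\ast$ of $\prob$ is optimal for $\prob_{c_{\alpha,e_i}}$. Thus for every optimal $S^\ast$ the same vector $\vv{\alpha}$ witnesses the condition, with $\sum_j \alpha_j = \alpha$. Hence $\alpha$ lies in the set over which $\uptol(E)$ is the maximum, so $\uptol(e_i) \le \uptol(E)$.

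\emph{Infinite case.} If $\uptol(e_i) = \infty$, the same argument applies to any finite $\beta \ge 0$. The definition's set is downward closed in $\alpha$ for a single element, since optimality of $S^\ast$ is preserved for all smaller increases. Alternatively, by Theorem~\ref{single_upp}, $f_c(D_{+}(e_i)) > c^\ast$, so no optimal solution contains $e_i$, and increasing $c(e_i)$ by any $\beta$ keeps every optimal solution's cost unchanged while not decreasing any other solution's cost. So every $\beta \ge 0$ is feasible for $E$, and $\uptol(E) = \infty$.

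\emph{Conclusion.} Taking the maximum over $i$ gives the claim. The only subtle point is monotonicity, needed to justify that $\alpha = \uptol(e_i)$ itself, rather than just values below it, is admissible in the finite case. This is handled by noting that, for fixed $S^\ast$, the map $\alpha \mapsto f_{c_{\alpha,e_i}}(S^\ast) - f_{c_{\alpha,e_i}}(S)$ is either constant or increases at rate $1$ (when $e_i \in S^\ast \setminus S$); it is nonincreasing only when $e_i \in S\setminus S^\ast$, where optimality is trivially preserved. So the feasible set is a closed interval containing $0$.
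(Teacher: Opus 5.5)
The paper gives no proof of this statement; it is quoted from \cite{JT26}. Your argument is the natural direct proof and is correct. You put the whole increase $\uptol(e_i)$ on $e_i$ and $0$ on the other elements, so that $\prob_{c_{\vv{\alpha},E}}=\prob_{c_{\alpha,e_i}}$ and one vector serves every optimal $S^\ast$. Your observation that the feasible set for $\uptol(e_i)$ is a closed interval $[0,M]$ or $[0,\infty)$ justifies both that the maximum is attained and the unbounded case.

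One correction to your ``alternatively'' remark. $\uptol(e_i)=\infty$ does not force $f_c(D_{+}(e_i))>c^\ast$. By Theorem~\ref{single_upp} it also occurs when $f_c(D_{+}(e_i))=c^\ast$ and $D_{-}(e_i)=\emptyset$, i.e., when $e_i$ lies in every feasible solution. The claim still holds there, because every feasible cost rises by the same $\beta$, and your primary interval argument already covers this case.
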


Let $\prob = (\elems, D, c,f_c) $ be a CSP, $ E = \{e_1, e_2, \ldots, e_k\} \subseteq \elems$ and $S^\ast$ be an optimal solution of $\prob$. 
Furthermore, let $c^\ast$
be the optimal objective value of~$\prob$, 
$E' = E \cap S^\ast$ 
and $\alpha_{1}, \alpha_{2}, \ldots, \alpha_{k}$ be the increases in the
costs of elements $e_1, e_2, \ldots, e_k$, respectively. 

Algorithm 1 from~\cite{JT26} computes 
the set upper tolerance of a 
set $E$. 
In fact, this algorithm consists of the following 
linear program  (LP)
with a maximization objective, which we call \emph{``Exact Upper LP'' (EUL)}:
\begin{align}
\nonumber
\text{maximize} \quad & \sum_{i=1}^k \alpha_i \\
\text{subject to} \quad & 
\sum_{e_i \in E'} \alpha_i = \sum_{e_i \in F} \alpha_i
\quad \forall\, F \subseteq E \text{ with } f_c(D_{\sim}(E \setminus F; F)) = c^\ast
\label{up_lp_eq} \\
& \begin{aligned}
c^\ast + \sum_{e_i \in E'} \alpha_i \quad
& \leq \quad f_c(D_{\sim}(E \setminus F; F)) + \sum_{e_i \in F} \alpha_i \\
& \phantp \quad  \forall\, F \subseteq E \text{ with } f_c(D_{\sim}(E \setminus F; F)) > c^\ast 
\end{aligned}\label{up_lp_ineq} \\
\nonumber & \alpha_{i} \quad \geq \quad 0 
\quad \quad \quad \quad \forall\, i \in [k]
\end{align}

\begin{remark}\label{LP_rem_upp}
Let $\prob = (\elems, D, c,f_c) $ be a CSP and $F \subseteq E \subseteq \elems$.
\begin{itemize}
\item[\bf (a)] 
The values of $f_c(D_{\sim}(E\setminus F; F))$ can be found by setting the costs of all
elements in $F$ to a sufficiently small number and setting the costs of all
elements in $E\setminus F$ to a sufficiently large number and solving this new instance.
Finally, for computing the corresponding objective value, the values have to be
set back to their original value. This includes also 
$f_c(D_{+}(F)) = f_c(D_{\sim}(\emptyset; F))$ 
and $f_c(D_{-}(F)) = f_c(D_{\sim}(F; \emptyset))$. 
\item[\bf (b)] 
The terms $f_c(D_{\sim}(E \setminus F; F))$ can become infinite,
in which case the corresponding constraints become obsolete.
\item[\bf (c)] 
The equality constraints~\eqref{up_lp_eq} can be written as
\begin{align*}
\displaystyle c^\ast + \sum_{e_i \in E'} \alpha_i = c^\ast + \sum_{e_i \in F} \alpha_i = f_c(D_{\sim}(E \setminus F; F)) + \sum_{e_i \in F} \alpha_i
\end{align*}
$\forall\, F \subseteq E \text{ with } f_c(D_{\sim}(E \setminus F; F))
= c^\ast$.
\end{itemize}
\end{remark}

\subsection{Lower Tolerances}\label{sec2_3}

We use the generalized definition of the regular set lower tolerance from~\cite{JT26}.
As in this work
we consider only regular
set tolerances and not
reverse set lower
tolerances as in~\cite{JT24},
in the following
we omit the term
``regular''.

Let $\prob = (\elems, D, c,f_c)$ be a CSP and $e \in \elems$. Then the \emph{single lower
tolerance} $\lowtol(e)$ is defined~as 
\begin{align*}
\lowtol(e) := \max \big\{ \alpha \in \R_0^{+}  \,\big|\, f_{c_{-\alpha, e}}(\prob_{c_{{-\alpha}, e}}) = c^\ast \big\}.
\end{align*}

The following remark describes the original
definition of single lower tolerance.

\begin{remark}[{\cite[Section 2.3]{JT26}}] 
\label{low_tol_old_def}
Let $\prob = (\elems, D, c,f_c) $ be a CSP, 
$S^\ast$ be an optimal solution of $\prob$ and $ e \not\in S^\ast$. 
Then it holds that
{
\setlength{\abovedisplayskip}{1pt} 
\setlength{\belowdisplayskip}{1pt}
\setlength{\abovedisplayshortskip}{1pt}
\setlength{\belowdisplayshortskip}{1pt}
\begin{align*}
\lowtol(e) := \max \big\{ \alpha \in \R_0^{+}  \,\big|\, S^\ast \text{ is optimal for } \prob_{c_{{-\alpha}, e}} \big\}.
\end{align*}
}
\end{remark}

The following theorem
gives a formula for the single lower tolerance.

\begin{theorem}[{\cite[Theorem 4(b)]{JT26}}]\label{single_low}
Let $\prob = (\elems, D, c,f_c) $ be a CSP 
and $e \in \elems$. Then it holds that $\lowtol(e) = f_c(D_{+}(e)) - c^\ast$.
\end{theorem}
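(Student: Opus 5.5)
We prove the formula $\lowtol(e) = f_c(D_{+}(e)) - c^\ast$ by establishing the two inequalities $\lowtol(e) \ge f_c(D_{+}(e)) - c^\ast$ and $\lowtol(e) \le f_c(D_{+}(e)) - c^\ast$ directly from the definition. The key observation is how decreasing the cost of $e$ by $\alpha \ge 0$ affects feasible solutions. Every $S \in D_{-}(e)$ keeps its cost, while every $S \in D_{+}(e)$ has its cost lowered by exactly $\alpha$. Since $D = D_{+}(e) \cup D_{-}(e)$ is a disjoint union, we get
\begin{align*}
f_{c_{-\alpha,e}}(\prob_{c_{-\alpha,e}}) = \min\big\{ f_c(D_{-}(e)),\; f_c(D_{+}(e)) - \alpha \big\},
\end{align*}
with the convention that the minimum over an empty set is $\infty$. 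Note also that $c^\ast = \min\{f_c(D_{-}(e)), f_c(D_{+}(e))\}$.

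For the lower bound, set $\alpha_0 := f_c(D_{+}(e)) - c^\ast \ge 0$ and take any $\alpha \in [0,\alpha_0]$. Then $f_c(D_{+}(e)) - \alpha \ge c^\ast$. The first term $f_c(D_{-}(e))$ is also at least $c^\ast$, so the new optimum is at least $c^\ast$. It cannot exceed $c^\ast$, because lowering costs never increases the optimum value. More precisely, one of $f_c(D_{-}(e))$ and $f_c(D_{+}(e))$ equals $c^\ast$. If it is the first, the minimum is $c^\ast$. If it is the second, then $\alpha_0 = 0$, so $\alpha = 0$ and the minimum is again $c^\ast$. Hence every $\alpha \in [0,\alpha_0]$ is admissible.

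For the upper bound, take $\alpha > \alpha_0$. Then $f_c(D_{+}(e)) - \alpha < c^\ast$, so the new optimum value drops strictly below $c^\ast$, and $\alpha$ is not admissible. The admissible set is therefore exactly the closed interval $[0,\alpha_0]$, its maximum is attained, and $\lowtol(e) = \alpha_0$.

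The only delicate point is the degenerate case $D_{+}(e) = \emptyset$. There every decrease leaves the optimum unchanged, so the admissible set is $\R_0^{+}$ and both sides equal $\infty$. This matches the convention $f_c(\emptyset) = \infty$ once the maximum is read as a supremum. I expect this bookkeeping, together with checking that the maximum is attained, to be the main obstacle; the core computation above is routine.
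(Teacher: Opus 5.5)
Your argument is correct. The disjoint split $D = D_{+}(e) \cup D_{-}(e)$ gives $f_{c_{-\alpha,e}}(\prob_{c_{-\alpha,e}}) = \min\{f_c(D_{-}(e)),\, f_c(D_{+}(e)) - \alpha\}$, which settles both inequalities. Your treatment of $D_{+}(e)=\emptyset$, reading the maximum as a supremum so that it matches $f_c(\emptyset)=\infty$, is the right convention.

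The paper gives no proof of its own here; it imports the result from \cite{JT26}. Your reasoning is exactly the $k=1$ case of the paper's proof of Theorem~\ref{teo_ell}. There ELL has only the vacuous constraint for $F=\emptyset$ and the constraint $\alpha \le f_c(D_{+}(e)) - c^\ast$ for $F=\{e\}$.
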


Let $\prob = (\elems, D, c,f_c)$ be a CSP and $E = \{e_1, e_2, \ldots, e_k\} \subseteq
\elems$. Define the \emph{set lower tolerance} $\lowtol(E)$ of $E$ as
\begin{align*}
\lowtol(E) = 
    \max \left\{ \alpha \in \R \;\middle|\;
    \begin{aligned}
        &\exists\, \vv{\alpha} = (\alpha_1, \alpha_2, \ldots, \alpha_k),\; \alpha_1, \alpha_2, \ldots, \alpha_k \geq 0, \\
        &\alpha = \sum_{i=1}^{k} \alpha_i,\; 
        f_{c_{-\vv{\alpha}, E}}(\prob_{c_{-\vv{\alpha}, E}}) = c^\ast
    \end{aligned}
    \right\}.
\end{align*}

The following theorem
gives a lower and an upper bound for the set lower tolerance.

\begin{theorem}[{\cite[Theorem 5(a)]{JT26}}]\label{bounds_low}
Let $\prob = (\elems, D, c,f_c) $ be a CSP 
and $E = \{e_1, e_2, \ldots, e_k\} \subseteq \elems$. Then it holds that $\displaystyle \max^k_{i
=1} \big\{\lowtol(e_i) \big\} \leq \lowtol(E) \leq \sum^k_{i
=1} \lowtol(e_i)$.
\end{theorem}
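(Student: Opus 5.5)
The lower bound follows from a restriction argument, and the upper bound from showing that any admissible decrease vector has each coordinate bounded by the corresponding single lower tolerance.

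\emph{Lower bound.} Fix $j \in [k]$ and set $\vv{\alpha}$ with $\alpha_j = \lowtol(e_j)$ and $\alpha_i = 0$ for $i \neq j$. Then $\prob_{c_{-\vv{\alpha}, E}} = \prob_{c_{-\alpha_j, e_j}}$. By the definition of the single lower tolerance, its optimal value is $c^\ast$. So $\vv{\alpha}$ is admissible in the definition of $\lowtol(E)$, which gives $\lowtol(E) \geq \lowtol(e_j)$. Taking the maximum over $j$ yields the lower bound. If $\lowtol(e_j) = \infty$, i.e.\ $D_{+}(e_j) = \emptyset$ by Theorem~\ref{single_low}, the same argument applies to every finite $\alpha_j$, and $\lowtol(E) = \infty$.

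\emph{Upper bound.} Let $\vv{\alpha} \geq 0$ be admissible, so $f_{c_{-\vv{\alpha},E}}(\prob_{c_{-\vv{\alpha},E}}) = c^\ast$. The key claim is that $\alpha_i \leq \lowtol(e_i)$ for each $i$; summing then gives $\sum_i \alpha_i \leq \sum_i \lowtol(e_i)$, and taking the maximum over admissible $\vv{\alpha}$ finishes the proof.

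To prove the claim, fix $i$. If $D_{+}(e_i) = \emptyset$, then $\lowtol(e_i) = \infty$ and there is nothing to show. Otherwise take $S \in D_{+}(e_i)$ with $f_c(S) = f_c(D_{+}(e_i))$. Under the modified costs,
\[
c^\ast \leq f_{c_{-\vv{\alpha},E}}(S) = f_c(S) - \sum_{e_l \in S \cap E} \alpha_l \leq f_c(D_{+}(e_i)) - \alpha_i .
\]
The last inequality uses $e_i \in S$ and $\alpha_l \geq 0$. Hence $\alpha_i \leq f_c(D_{+}(e_i)) - c^\ast = \lowtol(e_i)$ by Theorem~\ref{single_low}.

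The main obstacle is the degenerate cases. First, $\lowtol$ may be infinite; this is handled by the convention $f_c(\emptyset) = \infty$. Second, the maximum in the definition of $\lowtol(E)$ must be attained. This holds because the admissible set is closed: it is an intersection of finitely many linear inequalities $f_c(S) - \sum_{e_l \in S \cap E}\alpha_l \geq c^\ast$, together with the fact that $\vv{\alpha} = 0$ keeps the value at exactly $c^\ast$. Beyond these two points, the argument is elementary.
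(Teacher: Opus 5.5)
Your proof is correct. The paper gives no proof of this statement; it imports it from \cite{JT26}. The closest in-paper argument is Theorem~\ref{part_thm_low}, whose singleton-partition case is exactly your upper bound.

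\textbf{How the two upper-bound arguments differ.} The paper's argument is structural and by contradiction. If an admissible $\vv{\alpha}$ had some block sum exceeding $\lowtol(E_t)$, the decrease restricted to $E_t$ would be inadmissible for $E_t$. It would therefore push the optimum below $c^\ast$, and decreasing the remaining elements of $E$ can only lower it further.

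You instead bound each coordinate by an explicit witness: a cheapest solution in $D_{+}(e_i)$, with the other decreases discarded via $\alpha_l \geq 0$. This is the argument of Theorem~\ref{mincost_thm_low} applied coordinatewise, combined with the closed form of Theorem~\ref{single_low}.

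\textbf{What each route buys.} Your route is more concrete, but it depends on that closed form. For a block $E_t$, a single witness only gives $\sum_{e_l \in E_t} \alpha_l \leq f_c(D_{+}(E_t)) - c^\ast$. This can be strictly larger than $\lowtol(E_t)$ (compare Theorem~\ref{two_low}), so your argument does not extend to general partitions. The paper's monotonicity argument does extend, verbatim.

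Your zero-padding argument for the lower bound, including the infinite case, is the natural one and is fine.

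\textbf{A minor correction to your closing remark.} The admissible set coincides with the polyhedron given by $\vv{\alpha} \geq 0$ and $f_c(S) - \sum_{e_l \in S \cap E}\alpha_l \geq c^\ast$ for all $S \in D$. The reason is not that $\vv{\alpha} = 0$ is admissible, which only shows nonemptiness. It is that for every $\vv{\alpha} \geq 0$ an optimal solution of $\prob$ has modified cost at most $c^\ast$, so the equality condition reduces to these inequalities. In any case, you do not need attainment of the maximum, since both of your bounds hold for the supremum directly.
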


Let $\prob = (\elems, D, c,f_c) $ be a CSP and $ E = \{e_1, e_2, \ldots, e_k\} \subseteq \elems$. 
Furthermore, let $c^\ast$
be the optimal objective value of $\prob$ 
and $\alpha_{1}, \alpha_{2}, \ldots, \alpha_{k}$ be the decreases in the
costs of elements $e_1, e_2, \ldots, e_k$, respectively. 

Algorithm 2 from~\cite{JT26} computes 
the set lower tolerance of a 
set $E$. 
In fact, this algorithm consists of the following
linear program 
with a maximization objective, which we call \emph{``Exact Lower LP'' (ELL)}:
\begin{align}
\nonumber
\text{maximize} \quad \sum_{i = 1}^{k} \alpha_{i} \quad & & & \\
\text{subject to} \quad \sum_{e_i \in F} \alpha_i \quad 
& \leq \quad f_c(D_{\sim}(E \setminus F; F)) - c^\ast 
& \quad 
& \forall\, F \subseteq E \label{ELLc} \\
\nonumber
\alpha_{i} \quad 
& \geq \quad 0 
& \quad 
& \forall\, i \in [k]
\end{align}

Note that Remark~\ref{LP_rem_upp}(a),(b) can still be applied to ELL.

\subsection{Minimum Spanning Tree Problem}\label{sec2_4}

Consider a connected undirected graph
$\Gamma=(V(\Gamma),E(\Gamma))$ with weight function $c :
E(\Gamma) \rightarrow \R$. 
\emph{The Minimum Spanning Tree Problem (MSTP)} is the problem of finding a
connected cycle-free subgraph of $\Gamma$, called  \emph{spanning
tree}, of minimum weight.

MSTP is a combinatorial sum problem
$\prob$ given by a tuple $(\elems, D, c, f_c)$, where $\elems=E(\Gamma)$, $D$ is the set of all
spanning trees in $\Gamma$, $c$ is the mentioned cost function and $f_c : D \rightarrow \R$ is the objective
function of type $\sum$ defined by $f_c(T) = \sum_{e \in E(T)} c(e) \; \forall
\; T \in D$.

Let $T \in D$ and $g \not\in E(T)$. Denote by $P_T(g)$ the path in $T$ joining the
ends of~$g$. Note that since $T$ is a spanning tree, this path is unique.

We will need the following results.

\begin{theorem}[{\cite[Lemma 1]{T82}}]
\label{MST_opt} 
Let $\prob=(\elems, D, c, f_c)$ be an MSTP and let $T \in D$.
Then $T$ is an MST of $\prob$ if and 
only if for each non-tree edge $g$, the cost of $g$ is at 
least as large as the cost of any edge on $P_T(g)$, i.e., $\forall \; e \in P_T(g) : c(e) \leq c(g)$.
\end{theorem}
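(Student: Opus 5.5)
We prove the two directions of the statement (the cycle optimality criterion for MSTs) separately. Throughout we use only elementary properties of spanning trees. For an edge $e \in E(T)$, removing $e$ splits $T$ into two components, which defines a cut $\delta_T(e)$ of $\Gamma$. A non-tree edge $g$ crosses this cut if and only if $e \in P_T(g)$. Also, for $g \notin E(T)$ and $e \in P_T(g)$, the edge set $E(T) \setminus \{e\} \cup \{g\}$ is again a spanning tree.

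\textbf{Necessity.} Suppose $T$ is an MST, and suppose some non-tree edge $g$ and some $e \in P_T(g)$ satisfy $c(e) > c(g)$. We exchange $e$ for $g$ and obtain the spanning tree $T' = T - e + g$. Its cost is $f_c(T') = f_c(T) - c(e) + c(g) < f_c(T)$, which contradicts the optimality of $T$. This direction is routine.

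\textbf{Sufficiency.} Assume the path condition holds. Let $T^\ast$ be an MST chosen to maximise $|E(T) \cap E(T^\ast)|$; we show $f_c(T) = f_c(T^\ast)$. If $T \neq T^\ast$, pick $e \in E(T) \setminus E(T^\ast)$. Removing $e$ from $T$ gives the cut $\delta_T(e)$. The path in $T^\ast$ between the endpoints of $e$ must cross this cut, so it contains an edge $g$ with $g \notin E(T)$ and $g \in \delta_T(e)$. Since $g$ crosses the cut, $e \in P_T(g)$, and the hypothesis gives $c(e) \le c(g)$. Because $g$ lies on the $T^\ast$-path of $e$, the edge set $T^\ast - g + e$ is a spanning tree. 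Its cost is at most $f_c(T^\ast)$, so it is again an MST, and it shares one more edge with $T$. This contradicts the maximality in the choice of $T^\ast$. Hence $T = T^\ast$, and $T$ is optimal.

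The main obstacle is the exchange step in the sufficiency direction. We must find a single edge $g$ that satisfies both conditions at once: $e$ lies on $P_T(g)$, so that the hypothesis applies, and $g$ lies on the $T^\ast$-cycle of $e$, so that $T^\ast - g + e$ is a tree. Choosing $g$ from the $T^\ast$-path between the endpoints of $e$ and requiring it to cross the $T$-cut $\delta_T(e)$ secures both properties simultaneously, and we will argue this point carefully.
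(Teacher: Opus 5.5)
The paper does not prove this statement. It quotes it from Tarjan~\cite{T82} and uses it as a black box, so there is no in-paper argument to compare with. Your proposal is a correct, self-contained proof.

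Your necessity direction is the same single exchange $T-e+g$ that the paper itself performs in the second case of Theorem~\ref{MST_single1}(a).

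For sufficiency you use the standard extremal exchange argument. Choosing $T^\ast$ among all MSTs (a finite nonempty family, since $\Gamma$ is connected) to maximise $|E(T)\cap E(T^\ast)|$ is exactly what handles ties. With non-distinct costs, a single exchange from an arbitrary MST need not reach $T$; the overlap count is the potential that forces the argument to close.

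The only step you assert without justification is $g\notin E(T)$. Add that $e$ is the unique edge of $T$ joining the two components of $T-e$, and that $g\neq e$ because $g\in E(T^\ast)$ while $e\notin E(T^\ast)$.

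The remaining steps are right as stated:
\begin{itemize}
\item $e\in P_T(g)$, because $P_T(g)$ must cross $\delta_T(e)$.
\item $T^\ast-g+e$ is a spanning tree, because $g$ lies on the $T^\ast$-path joining the ends of $e$.
\item The overlap with $T$ grows by exactly one, because the added edge lies in $T$ and the removed one does not.
\end{itemize}

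Another common route shows that Kruskal's algorithm, with a suitable tie-breaking order, outputs $T$. Compared with that, your argument is purely structural and needs no algorithm analysis. It also makes both directions, each of which the paper uses in Section~\ref{sec5}, independent of the external reference.
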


\begin{theorem}[{\cite[Corollary 1]{T82}}]
\label{MST_lemma} 
Let $\prob=(\elems, D, c, f_c)$ be an MSTP
and $T^\ast$ be an MST of $\prob$. Then the following statements hold:
\begin{itemize}
\item[\bf (a)] Let $e \in E(T^\ast)$. Then $T^\ast$ remains minimal until 
$c(e)$ is increased by more than $c(g) - c(e)$, where $g$ is a non-tree
edge of minimum cost such that $e$ lies on $P_{T^\ast}(g)$.

\item[\bf (b)] Let $g \not\in E(T^\ast)$. Then $T^\ast$ remains minimal until
$c(g)$ is decreased by more than $c(g) - c(e)$, where $e$ is an edge of
maximum cost on $P_{T^\ast}(g)$.
\end{itemize}
\end{theorem}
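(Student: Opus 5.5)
Both parts follow directly from the optimality criterion of Theorem~\ref{MST_opt}. The tree $T^\ast$ stays fixed, so the family of non-tree edges and the paths $P_{T^\ast}(g)$ do not change when costs are changed. Optimality of $T^\ast$ is therefore exactly the system of inequalities $c(e') \leq c(g)$ for all non-tree $g$ and all $e' \in P_{T^\ast}(g)$. The plan is to change one cost by a parameter $\alpha \geq 0$ and determine which inequalities of this system involve that cost. We then solve them for $\alpha$. The ``if'' direction of Theorem~\ref{MST_opt} shows $T^\ast$ is still minimal up to the threshold, and the ``only if'' direction shows it is not minimal beyond it.

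For (a), let $e \in E(T^\ast)$ and replace $c(e)$ by $c(e)+\alpha$. The only inequalities containing $c(e)$ are those for non-tree edges $g$ with $e \in P_{T^\ast}(g)$; they read $c(e)+\alpha \leq c(g)$. All other inequalities involve only unchanged costs, and they held because $T^\ast$ was optimal. Hence $T^\ast$ remains an MST if and only if $\alpha \leq c(g)-c(e)$ for every such $g$. This is equivalent to $\alpha \leq c(g_{\min}) - c(e)$, where $g_{\min}$ is a minimum-cost non-tree edge whose tree path contains $e$. If $\alpha$ exceeds this value, the inequality for $g_{\min}$ fails, so $T^\ast$ is no longer minimal. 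If no such $g$ exists, $e$ is a bridge and $T^\ast$ remains minimal for every $\alpha$, which is consistent with reading the minimum over the empty set as $\infty$.

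For (b), let $g \notin E(T^\ast)$ and replace $c(g)$ by $c(g)-\alpha$. The cost $c(g)$ appears only in the inequalities attached to $g$ itself, namely $c(e') \leq c(g)-\alpha$ for all $e' \in P_{T^\ast}(g)$. This is because $g$ never lies on the tree path of another non-tree edge. These inequalities hold if and only if $\alpha \leq c(g) - \max_{e' \in P_{T^\ast}(g)} c(e') = c(g)-c(e)$ for a maximum-cost edge $e$ on the path. Every other inequality is unchanged. Thus $T^\ast$ stays minimal exactly up to this threshold and fails to be minimal once $\alpha$ exceeds it.

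The only point needing care is the localization step: checking that a modified cost enters no constraint other than those listed. This holds because tree edges occur only on the left-hand sides and non-tree edges only on the right-hand sides, each in its own constraint group. I expect no real obstacle beyond stating this bookkeeping cleanly.
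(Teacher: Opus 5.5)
Your proof is correct. The paper gives no proof of its own and simply cites this as Corollary~1 of Tarjan, where it follows, as in your argument, by isolating the constraints of the cycle-optimality criterion (Theorem~\ref{MST_opt}, Tarjan's Lemma~1) that contain the one modified cost; your localization step, the boundary case where $\alpha$ equals the threshold, and the bridge case are all handled soundly.
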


\section{Computation of Set Upper Tolerances}\label{sec3}

\subsection{General Results}

The following theorem gives a more formal proof of
the correctness of EUL for computing set upper tolerance, than the proof given
in~\cite{JT26} for the correctness of Algorithm 1.

\begin{theorem}\label{teo_upp}
Let $\prob = (\elems, D, c,f_c) $ be a CSP
and $E =\{e_1,e_2,\dots,e_k\} \subseteq \elems$. 
Then the optimal objective value of EUL is equal to the set upper
tolerance $\uptol(E)$. 
\end{theorem}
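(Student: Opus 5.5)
The plan is to reduce both sides to one characterization of the admissible cost-increase vectors $\vv{\alpha}\geq 0$ and then compare the two maximizations. The basic tool is to partition $D$ by the trace on $E$. For $F\subseteq E$, every $S\in D_{\sim}(E\setminus F;F)$ satisfies $S\cap E=F$. Hence
\begin{align*}
f_{c_{\vv{\alpha},E}}(S)=f_c(S)+\sum_{e_i\in F}\alpha_i ,
\end{align*}
so within this class the perturbed cost is the original cost plus a constant. The minimum over the class is therefore $f_c(D_{\sim}(E\setminus F;F))+\sum_{e_i\in F}\alpha_i$. Since the classes for $F\subseteq E$ partition $D$, the perturbed optimum is
\begin{align*}
f_{c_{\vv{\alpha},E}}(\prob_{c_{\vv{\alpha},E}})=\min_{F\subseteq E}\Big(f_c(D_{\sim}(E\setminus F;F))+\sum_{e_i\in F}\alpha_i\Big),
\end{align*}
where empty classes contribute $\infty$ (Remark~\ref{LP_rem_upp}(b)). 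The fixed optimal solution $S^\ast$ lies in the class $F=E'$ and has perturbed cost $c^\ast+\sum_{e_i\in E'}\alpha_i$.

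\textbf{Step 1: single optimal solution.} Show that $S^\ast$ stays optimal for $\prob_{c_{\vv{\alpha},E}}$ if and only if
\begin{align*}
c^\ast+\sum_{e_i\in E'}\alpha_i\leq f_c(D_{\sim}(E\setminus F;F))+\sum_{e_i\in F}\alpha_i \quad \text{for all } F\subseteq E .
\end{align*}
This is immediate from the displayed formula for the optimum. Splitting the $F$'s according to whether $f_c(D_{\sim}(E\setminus F;F))>c^\ast$ or $=c^\ast$ (it cannot be smaller, since $c^\ast$ is the optimum) gives exactly~\eqref{up_lp_ineq}, together with the inequalities $\sum_{E'}\alpha_i\le\sum_F\alpha_i$ for the tight classes.

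\textbf{Step 2: all optimal solutions.} A class $F$ is tight, i.e.\ $f_c(D_{\sim}(E\setminus F;F))=c^\ast$, exactly when some optimal solution $S'$ of $\prob$ satisfies $S'\cap E=F$. Applying Step~1 to $S'$ instead of $S^\ast$ gives the reverse inequality $\sum_F\alpha_i\le\sum_{E'}\alpha_i$. So a vector $\vv{\alpha}\ge 0$ keeps every optimal solution optimal if and only if it satisfies~\eqref{up_lp_eq} and~\eqref{up_lp_ineq}, i.e.\ it is EUL-feasible. This is the form of Remark~\ref{LP_rem_upp}(c). A side consequence is that the feasible region does not depend on which optimal $S^\ast$ was used to define $E'$.

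\textbf{Step 3: comparing the maximizations.} Given Step~2, the feasible set of EUL is precisely the set of vectors admissible in the definition of $\uptol(E)$, and the objective $\sum\alpha_i$ is the same on both sides. The two values therefore coincide. If EUL is unbounded, then arbitrarily large admissible sums exist, matching $\uptol(E)=\infty$ as in Theorem~\ref{single_upp}. The feasible region is a nonempty polyhedron (it contains $\vv{\alpha}=0$), so if the LP is bounded its maximum is attained and the "max" in the definition is justified.

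\textbf{Main obstacle.} I expect the hardest part to be the quantifier structure of the definition, which reads "for each optimal solution $S^\ast$ there exists $\vv{\alpha}$". I will take it, consistent with the single-element definition, to require one vector $\vv{\alpha}$ that keeps all optimal solutions optimal. Step~2 is exactly where this matters: it is what produces the equality constraints~\eqref{up_lp_eq} rather than mere inequalities. I would also argue explicitly that the feasible region of EUL is independent of the choice of $S^\ast$, since any two optimal solutions give tight classes that the equalities identify. This shows that the per-$S^\ast$ formulation and the uniform formulation yield the same value.
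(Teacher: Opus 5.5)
Your argument is correct and is essentially the paper's proof made explicit. The paper prunes each class $D_{\sim}(E\setminus F;F)$ to its cheapest members, which is your observation that the perturbation shifts a whole class by $\sum_{e_i\in F}\alpha_i$. It then splits ``all optimal solutions stay optimal'' into equal values on the tight classes, giving \eqref{up_lp_eq}, and no undercutting by the remaining classes, giving \eqref{up_lp_ineq}, which is your Steps~1--2.

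Do drop the closing claim that the per-$S^\ast$ reading of the quantifiers gives the same value, because it does not. In the setting of Theorem~\ref{two_upp}(b), choosing $\vv{\alpha}$ separately for each optimal solution lets you put an arbitrarily large increase on an element of $E$ outside that solution. That gives $\infty$ rather than $2\cdot\big(f_c(D_{-}(E))-c^\ast\big)$. So the single-vector reading you adopt, which is also the one the paper's proof uses, is required for the theorem to hold. The $S^\ast$-independence of EUL only shows that $E'$ may be taken from any optimal solution.
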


\begin{proof}
Let $ \vv{\alpha} = (\alpha_1, \alpha_2, \ldots, \alpha_k), \alpha_i \ge 0$ $\forall\, i \in [k] $.
We consider the instance~$ \prob'$, where for each $ F \subseteq E $
we only keep as feasible solutions the solutions from
$ D_{\sim}(E \setminus F; F)$ 
with smallest cost, if such solutions exist.
Similarly, we create the instance 
$\prob'_{c_{\vv{\alpha}, E}} $, and this instance
has the same set of 
feasible solutions as~$ \prob'$.

As all removed solutions 
are not relevant for the computation of the upper tolerance of $E$,
$\uptol(E) = \uptolb(E)$ follows. 
Furthermore, $ \uptolb(E)$ is equal to
the solution of the following maximization problem:
\begin{align}
\nonumber
\text{maximize} \quad & \sum_{i = 1}^{k} \alpha_{i} \\
\nonumber
\text{subject to} \quad {\text{all optimal solutions of $ \prob'$} } & {\text{ are also optimal solutions of $\prob'_{c_{\vv{\alpha}, E}}$}}  \\
\nonumber
\alpha_{i} \quad & \geq \quad 0 \quad \forall\, i \in [k] 
\end{align}

\medskip

Condition ``all optimal solutions of $ \prob'$ are also optimal solutions of $\prob'_{c_{\vv{\alpha}, E}}$'' is equivalent to
the following two conditions:
\begin{itemize}
\item[$\bullet$] All optimal solutions of $ \prob'$
have the same objective value as an optimal solution $S^\ast$ 
of $\prob'_{c_{\vv{\alpha}, E}}$. 

\item[$\bullet$] All non-optimal solutions of $ \prob'$
have the same
or greater
objective value
as an optimal solution $S^\ast$ of
$\prob'_{c_{\vv{\alpha}, E}}$.
\end{itemize}

By adding $c^\ast$ on both sides, it holds that the first condition is
equivalent to
the constraints~\eqref{up_lp_eq}.
As $ f_{c_{\vv{\alpha}, E}}(D_{\sim}(E \setminus F; F)) = \infty$ for 
$ D_{\sim}(E \setminus F; F)) = \emptyset$, 
the second condition is equivalent to
the constraints~\eqref{up_lp_ineq}.
The assertion follows. \qedhere
\end{proof}

\subsection{Formulas for Cardinality 2 and 3}

\begin{theorem}
\label{two_upp}
Let $\prob = (\elems, D, c,f_c) $ be a CSP 
and $E = \{e_1, e_2\} \subseteq \elems$. The following statements hold:
\begin{itemize}
\item[\bf (a)] Let $f_c(D_{+}(E)) = c^\ast$. Then it holds that
\begin{align*}
\uptol(E)\! & =\! \min \big\{f_c(D_{\sim}(e_1; e_2))-c^\ast+f_c(D_{\sim}(e_2; e_1))-c^\ast,
f_c(D_{-}(E))-c^\ast\! \big\} \\
& =\! \min \big\{\uptol(e_1) + \uptol(e_2), f_c(D_{-}(E))-c^\ast\! \big\}.
\end{align*}

\item[\bf (b)] Let the assumptions of \textup{\textbf{(a)}} do not hold and let 
$$f_c(D_{\sim}(e_1; e_2)) = f_c(D_{\sim}(e_2; e_1)) = c^\ast.$$ Then it holds that
$\uptol(E) = 2 \cdot \big( f_c(D_{-}(E)) - c^\ast \big)$.

\item[\bf (c)] Let the assumptions of \textup{\textbf{(a)}} and \textup{\textbf{(b)}} do
not hold. Then it holds that $\uptol(E) \!=\! \infty$.
\end{itemize}
\end{theorem}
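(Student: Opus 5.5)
The plan is to apply Theorem~\ref{teo_upp} with $k=2$ and solve EUL explicitly. The constraints are indexed by the four subsets $F\in\{\emptyset,\{e_1\},\{e_2\},E\}$. Abbreviate
$A=f_c(D_{-}(E))$, $B_1=f_c(D_{\sim}(e_2;e_1))$, $B_2=f_c(D_{\sim}(e_1;e_2))$ and $C=f_c(D_{+}(E))$. These four classes partition $D$, so $c^\ast=\min\{A,B_1,B_2,C\}$, and each value may be $\infty$; by Remark~\ref{LP_rem_upp}(b) the corresponding constraints are then void. Theorem~\ref{teo_upp} holds for any fixed optimal $S^\ast$, so in each case I choose $S^\ast$ conveniently. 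That choice fixes $E'=E\cap S^\ast$. I then write down the (at most four) constraints and maximize $\alpha_1+\alpha_2$ by hand.

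\textbf{Case (a)}, $C=c^\ast$. Take $S^\ast\in D_{+}(E)$, so $E'=E$. The constraint for $F=E$ is trivially satisfied. The constraint for $F=\emptyset$ reads $\alpha_1+\alpha_2\le A-c^\ast$. For $F=\{e_1\}$ one obtains $\alpha_2\le B_1-c^\ast$; when $B_1=c^\ast$ this is the equality $\alpha_2=0$, which has the same effect. Symmetrically $\alpha_1\le B_2-c^\ast$. The maximum of $\alpha_1+\alpha_2$ under these constraints is clearly $\min\{B_1+B_2-2c^\ast,\,A-c^\ast\}$, which is the first formula.

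For the second formula, Theorem~\ref{single_upp} applies because $f_c(D_+(e_i))=c^\ast$. Since $D_-(e_1)=D_{\sim}(e_1;e_2)\cup D_-(E)$, it gives $\uptol(e_1)=\min\{B_2,A\}-c^\ast$, and analogously for $e_2$. If both minima are attained by $B_i$, the two formulas agree. Otherwise one summand equals $A-c^\ast$ and the other is nonnegative, so both expressions equal $A-c^\ast$.

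\textbf{Case (b)}, $C>c^\ast$ and $B_1=B_2=c^\ast$. Take $S^\ast\in D_{\sim}(e_2;e_1)$, so $E'=\{e_1\}$. The equality constraint for $F=\{e_2\}$ forces $\alpha_1=\alpha_2$. The constraint for $F=\{e_1\}$ is trivial, and the constraint for $F=E$, namely $c^\ast+\alpha_1\le C+\alpha_1+\alpha_2$, always holds. The constraint for $F=\emptyset$ gives $\alpha_1\le A-c^\ast$; if $A=c^\ast$ it is the equality $\alpha_1=0$, which is consistent with the claimed value. Hence the optimum is $2(A-c^\ast)$.

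\textbf{Case (c)}, $C>c^\ast$ and not both $B_i=c^\ast$. The aim is to exhibit an unbounded feasible ray in EUL. I distinguish by where an optimal solution lies.
\begin{itemize}
\item If $B_1=c^\ast$, then $B_2>c^\ast$. Take $S^\ast\in D_{\sim}(e_2;e_1)$, set $\alpha_1=0$ and let $\alpha_2\to\infty$. The constraints are then $c^\ast\le A$, $\alpha_1=\alpha_1$, $c^\ast\le B_2+\alpha_2$ and $c^\ast\le C+\alpha_2$, all of which hold; if $A=c^\ast$ the constraint for $F=\emptyset$ is the equality $\alpha_1=0$, which is satisfied.
\item The case $B_2=c^\ast$ is symmetric.
\item If neither $B_i$ equals $c^\ast$, then $A=c^\ast$. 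Take $S^\ast\in D_-(E)$, so $E'=\emptyset$, and every constraint has the form $0\le (\text{value}-c^\ast)+\sum_{e_i\in F}\alpha_i$ with nonnegative value gap. Hence $\alpha_1,\alpha_2$ can be made arbitrarily large.
\end{itemize}
In every subcase $\uptol(E)=\infty$.

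The main obstacle is bookkeeping rather than depth. One must check that each constraint switches correctly between equality and inequality when some value equals $c^\ast$ or is infinite, and that the choice of $S^\ast$ in each case is legitimate, i.e.\ that $S^\ast$ really is optimal. The latter is guaranteed because $c^\ast$ is the minimum of the four class values.
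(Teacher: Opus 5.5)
Your proposal is correct and follows the paper's proof: in each case you fix a convenient optimal $S^\ast$, write out EUL for the four subsets $F$, and solve it by hand, arriving at the same reduced LPs as the paper. The only deviations are minor. For the second formula in (a) you use a case split, where the paper invokes the identity $\min\{a+b,c\}=\min\{\min\{a,c\}+\min\{b,c\},c\}$. In (c) you exhibit explicit unbounded feasible rays of EUL, whereas the paper argues directly from the definition: some $e_i$ lies in no optimal solution, so its cost can be raised arbitrarily.
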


\begin{proof}
In the following let $S^\ast$ be an optimal solution for $\prob$.

\begin{itemize}
\item[\bf (a)] 
W.l.o.g., assume that $e_1, e_2 \in S^\ast$. 
In this case EUL looks like:
\begin{align}
\nonumber \text{maximize} \quad \alpha_1 + \alpha_2 \quad & \\
\text{subject to} \quad \alpha_1 + \alpha_2 \quad & = \quad \alpha_1 + \alpha_2 \\
c^\ast + \alpha_1 + \alpha_2 \quad & \leq \quad f_c(D_{\sim}(e_2; e_1)) + \alpha_1 
\label{upp2a1} \\
c^\ast + \alpha_1 + \alpha_2 \quad & \leq \quad f_c(D_{\sim}(e_1; e_2)) + \alpha_2 
\label{upp2a2} \\
c^\ast + \alpha_1 + \alpha_2 \quad & \leq \quad f_c(D_{\sim}(E; \emptyset)) 
\label{upp2a3} \\
\nonumber \alpha_1, \phantom{+} \alpha_2 \quad &\geq\quad 0
\end{align}

Note that the inequalities~\eqref{upp2a1}--\eqref{upp2a3}
can originate from the equality constraints~\eqref{up_lp_eq}
or from the inequality constraints~\eqref{up_lp_ineq}.
In the first case, the inequalities~\eqref{upp2a1}--\eqref{upp2a3} hold since
$ c^\ast = f_c(D_{\sim}(E \setminus F; F))$ and the corresponding non-negativity constraints hold trivially. 

For example, assume that $c^\ast = f_c(D_{\sim}(e_2; e_1))$, i.e., there is an
optimal solution containing $e_1$ and not containing $e_2$. Applying
equality~\eqref{up_lp_eq} to this case gives $\alpha_1 + \alpha_2 = \alpha_2$,
which simplifies to $\alpha_1 = 0$. Because of the non-negativity constraints, we can
write $\alpha_1 \leq 0$, or equivalently $c^\ast + \alpha_1 + \alpha_2 \leq
f_c(D_{\sim}(e_2; e_1)) + \alpha_2$, thus obtaining inequality~\eqref{upp2a1}.

This LP can be rewritten as
\begin{align}
\nonumber \text{maximize} \quad \alpha_1 + \alpha_2 \quad & \\
\text{subject to} \quad \phantom{\alpha_1} \phantp  \alpha_2 \quad & \leq \quad f_c(D_{\sim}(e_2; e_1)) - c^\ast \label{upp2a1_} \\
\alpha_1 \phantp \phantom{\alpha_2} \quad & \leq \quad f_c(D_{\sim}(e_1; e_2)) - c^\ast \label{upp2a2_} \\
\alpha_1 + \alpha_2 \quad & \leq \quad f_c(D_{-}(E)) - c^\ast \label{upp2a3_} \\
\nonumber \alpha_1, \phantom{+} \alpha_2 \quad &\geq\quad 0
\end{align}

Therefore, the optimal objective value of this LP is 
\begin{align*}
\min \big\{f_c(D_{\sim}(e_1; e_2)) - c^\ast + f_c(D_{\sim}(e_2; e_1)) - c^\ast,
f_c(D_{-}(E))-c^\ast \big\}.    
\end{align*}

It follows:
\begin{align*}
\uptol(E) & \!=\! \min \big\{ f_c(D_{\sim}(e_1; e_2)) - c^\ast + f_c(D_{\sim}(e_2; e_1)) - c^\ast, 
f_c(D_{-}(E))-c^\ast \big\} \\
& 
\text{[as $ \min\{a+b,c\} 
= \min \{\min\{a, c\} + \min\{b, c\}, c\}$ for $ a,b,c \in \R $]}
\\
& \!=\! \min \left\{
\begin{aligned}
    &\min \big\{ f_c(D_{-}(E)) - c^\ast,\; f_c(D_{\sim}(e_1; e_2)) - c^\ast \big\} \\
    & + \min \big\{ f_c(D_{-}(E)) - c^\ast,\; f_c(D_{\sim}(e_2; e_1)) - c^\ast \big\}, \\
    &f_c(D_{-}(E)) - c^\ast
\end{aligned}
\right\} \\
& \!=\! \min \big\{\uptol(e_1) + \uptol(e_2), f_c(D_{-}(E))-c^\ast \big\}. 
\end{align*} 

\item[\bf (b)] W.l.o.g., assume that $e_1 \in S^\ast$, $e_2 \not\in S^\ast$.
In this case EUL looks like:
\begin{align}
\nonumber \text{maximize} \quad \alpha_1 + \alpha_2 \quad & \\
\text{subject to} \phantom{\alpha_1} \phantp \quad \alpha_1 \quad & = \quad \alpha_1 \\
\alpha_1 \quad & = \quad \alpha_2 \\
c^\ast + \alpha_1 \quad & \leq \quad f_c(D_{\sim}(\emptyset; E)) + \alpha_1 + \alpha_2 \label{upp2b1} \\
c^\ast + \alpha_1 \quad & \leq \quad f_c(D_{\sim}(E; \emptyset)) \label{upp2b2} \\
\nonumber \alpha_1, \phantom{+} \alpha_2 \quad &\geq\quad 0
\end{align}

Note that since \textbf{(a)} does not hold, the inequality~\eqref{upp2b1}
originates from the inequality constraints~\eqref{up_lp_ineq}. The
inequality~\eqref{upp2b2} can originate from the equality
constraints~\eqref{up_lp_eq}
or from the inequality constraints~\eqref{up_lp_ineq}.

This LP can be rewritten as
\begin{align}
\nonumber
\text{maximize} \quad 2 \alpha \quad & \\
\text{subject to} \quad \phantom{2} \alpha \quad & \geq \quad c^\ast - f_c(D_{+}(E)) \label{upp2b1_} \\
\alpha \quad & \leq \quad f_c(D_{-}(E)) - c^\ast \label{upp2b2_} \\ 
\nonumber \alpha \quad & \geq \quad 0
\end{align}

Since $f_c(D_{+}(E)) > c^\ast$ holds, we have $c^\ast - f_c(D_{+}(E)) < 0$ and
thus the  inequality~\eqref{upp2b1_} 
is redundant. 
Therefore, the optimal objective value of this LP is 
$2 \cdot \big(f_c(D_{-}(E)) - c^\ast \big)$. 

\item[\bf (c)] In this case, there exists $i \in [2]$ so that $e_i$ does not
belong to any optimal solution.
The cost of this element $e_i$ 
can be increased with any $ \alpha > 0 $ without any optimal solution becoming non-optimal, so $\uptol(E) = \infty$. \qedhere
\end{itemize}
\end{proof}

\begin{theorem}
\label{three_upp}
Let $\prob = (\elems, D, c,f_c) $ be a CSP and $E = \{e_1, e_2, e_3\} \subseteq \elems$. The following statements hold:
\begin{itemize}
\item[\bf (a)] Let $f_c(D_{+}(E)) = c^\ast$. Then
it holds that
\begin{align*}
\uptol(E) = \min \left\{
\begin{aligned}
    & f_c(D_{-}(E)) - c^\ast, \uptol(e_1) + \uptol(e_2, e_3), \\
    & \uptol(e_2) + \uptol(e_1, e_3), \uptol(e_3) + \uptol(e_1, e_2), \\
    & \tfrac{1}{2} \cdot (\uptol(e_1, e_2) + \uptol(e_1, e_3) + \uptol(e_2, e_3))
\end{aligned}
\right\}.
\end{align*}

\item[\bf (b)] Let the assumptions of \textup{\textbf{(a)}} do not hold and let $$f_c(D_{\sim}(e_1; e_2, e_3)) = f_c(D_{\sim}(e_2; e_1, e_3)) = f_c(D_{\sim}(e_3; e_1, e_2)) = c^\ast.$$ Then it holds that
\begin{align*}
\uptol(E) \!=\! \min \left\{
\begin{aligned}
    &3\cdot(f_c(D_{\sim}(e_2, e_3; e_1)) - c^\ast), 3\cdot(f_c(D_{\sim}(e_1, e_3; e_2)) - c^\ast), \\
    &3\cdot(f_c(D_{\sim}(e_1, e_2; e_3)) - c^\ast), \tfrac{3}{2}\cdot(f_c(D_{-}(E)) - c^\ast)
\end{aligned}
\right\}.
\end{align*}

\item[\bf (c)] Let the assumptions of \textup{\textbf{(a)}} and
\textup{\textbf{(b)}} do not hold and let
$$\exists\; \{i_1, i_2, i_3\} = [3] \text{ with } 
f_c(D_{\sim}(e_{i_3}; e_{i_1}, e_{i_2})) = f_c(D_{\sim}(e_{i_2}; e_{i_1}, e_{i_3})) = c^\ast.$$  
W.l.o.g., let $f_c(D_{\sim}(e_3; e_1, e_2)) = f_c(D_{\sim}(e_2; e_1, e_3)) = c^\ast$. 
Then it holds that
\begin{align*}
\uptol(E) \!=\! \min \left\{
\begin{aligned}
    &f_c(D_{\sim}(e_1; e_2, e_3)) \!-\! c^\ast \!+\! 3 \!\cdot\! (f_c(D_{\sim}(e_2, e_3; e_1)) \!-\! c^\ast), \\ 
    &f_c(D_{\sim}(e_1, e_3; e_2)) \!-\! c^\ast \!+\! 2 \!\cdot\! (f_c(D_{\sim}(e_2, e_3; e_1)) \!-\! c^\ast), \\
    &f_c(D_{\sim}(e_1, e_2; e_3)) \!-\! c^\ast \!+\! 2 \!\cdot\! (f_c(D_{\sim}(e_2, e_3; e_1)) \!-\! c^\ast), \\
    &f_c(D_{\sim}(e_2, e_3; e_1)) \!-\! c^\ast \!+\! f_c(D_{-}(E)) \!-\! c^\ast, \\
    &2 \!\cdot\! (f_c(D_{-}(E)) - c^\ast)
\end{aligned}
\right\}.
\end{align*}

\item[\bf (d)] Let the assumptions of \textup{\textbf{(a)}}, \textup{\textbf{(b)}} and
\textup{\textbf{(c)}} do
not hold and let
$$\exists\; \{i_1, i_2, i_3\} = [3] \text{ with } f_c(D_{\sim}(e_{i_3}; e_{i_1}, e_{i_2})) = f_c(D_{\sim}(e_{i_1}, e_{i_2}; e_{i_3})) = c^\ast.$$ 
W.l.o.g., let $f_c(D_{\sim}(e_3; e_1, e_2)) = f_c(D_{\sim}(e_1, e_2; e_3)) = c^\ast$. 
Then it holds that
\begin{align*}
\uptol(E) \!=\! \min\! \left\{
\begin{aligned}
    & 2\cdot(f_c(D_{\sim}(e_2, e_3; e_1)) - c^\ast + f_c(D_{\sim}(e_1, e_3; e_2)) - c^\ast), \\
    & 2\cdot(f_c(D_{-}(E)) - c^\ast)
\end{aligned}
\right\}.
\end{align*}

\item[\bf (e)] Let the assumptions of \textup{\textbf{(a)}}, \textup{\textbf{(b)}}, \textup{\textbf{(c)}} and
\textup{\textbf{(d)}} do not hold and let
$$f_c(D_{\sim}(e_1, e_2; e_3)) =
f_c(D_{\sim}(e_1, e_3; e_2)) = f_c(D_{\sim}(e_2, e_3; e_1)) = c^\ast.$$ Then it holds that
\begin{align*} 
\uptol(E) =  3 \cdot \big(f_c(D_{-}(E)) - c^\ast \big).
\end{align*}

\item[\bf (f)] Let the assumptions of \textup{\textbf{(a)}}, \textup{\textbf{(b)}},
\textup{\textbf{(c)}}, \textup{\textbf{(d)}} and \textup{\textbf{(e)}} do not hold. Then it holds that
$\uptol(E) = \infty$.
\end{itemize}
\end{theorem}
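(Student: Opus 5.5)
The plan mirrors the proof of Theorem~\ref{two_upp}: by Theorem~\ref{teo_upp}, $\uptol(E)$ equals the optimal value of EUL. In each case we fix an optimal solution $S^\ast$ with a convenient $E' = E\cap S^\ast$, write down EUL explicitly for $k=3$, and simplify it. As in the two-element case, every equality constraint~\eqref{up_lp_eq} for a set $F$ with $f_c(D_{\sim}(E\setminus F;F))=c^\ast$ reduces, via Remark~\ref{LP_rem_upp}(c), to $\sum_{e_i\in E'}\alpha_i=\sum_{e_i\in F}\alpha_i$. Where it only forces some $\alpha_i$ to vanish, it can be rewritten as an inequality of the same shape as~\eqref{up_lp_ineq}. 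The cases (a)--(f) are organised by which ``patterns'' $F$ admit an optimal solution: patterns with $|F|=3$, then $|F|=2$ with all three such $F$, then exactly two, then the mixed pattern $|F|=2$ together with $|F|=1$, then all three $|F|=1$, and finally the case where some element lies in no optimal solution.

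\textbf{Case (a).} Take $E'=E$. The constraints become $\sum_{i\notin F}\alpha_i\le f_c(D_{\sim}(E\setminus F;F))-c^\ast$ for the seven proper subsets $F$. This is a packing LP in three variables with bounds $a_i$ for singletons, $b_{ij}$ for pairs and $d=f_c(D_-(E))-c^\ast$ for the whole set. By LP duality, or by enumerating vertices of the dual, its value is
\[
\min\bigl\{d,\ a_1+a_2+a_3,\ a_i+b_{jk},\ \tfrac12(b_{12}+b_{13}+b_{23})\bigr\}.
\]
Next, I would invoke Theorem~\ref{two_upp}(a) on each pair, using the optimal solution containing $E$, to get $\uptol(e_i,e_j)=\min\{a_i+a_j,b_{ij}\}$. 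Theorem~\ref{single_upp} gives $\uptol(e_i)=\min\{a_i,d\}$. The stated formula then follows by min-manipulations like the identity used in Theorem~\ref{two_upp}(a). For example, $\uptol(e_1)+\uptol(e_2,e_3)$ expands to terms that are each either already in the list or dominated by $d$ or by $a_1+a_2+a_3$. The half-sum term similarly expands to averages that dominate listed terms. This bookkeeping is the main obstacle: checking that no spurious term enters and that $a_1+a_2+a_3$ is recovered, which comes from, e.g., $a_1+\min(a_2+a_3,b_{23})$.

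\textbf{Case (b).} Take $E'$ to be a single element, say $e_1$. The equalities then force $\alpha_1=\alpha_2=\alpha_3=:\alpha$. The remaining inequalities are $c^\ast+\alpha\le f_c(D_\sim(\cdot))+|F|\alpha$. For $|F|=1$ they give $\alpha\le f_c(D_{\sim}(E\setminus\{e_i\};e_i))-c^\ast$, which produces the coefficient $3$ after multiplying by the objective $3\alpha$. For $F=\emptyset$ the constraint gives $\alpha\le d$. It is $F=\emptyset$ and the constraints with $|F|\ge2$ that require care, since the stated $\tfrac32 d$ suggests the relevant objective is written over two of the variables. I would redo this carefully, using for each $F$ whichever optimal solution makes the constraint tight. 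The $|F|\ge2$ constraints are redundant because $f_c(D_+(E))>c^\ast$, exactly as for~\eqref{upp2b1_}.

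\textbf{Cases (c)--(e).} These are handled identically. Choose $S^\ast$ realising one of the optimal patterns, impose the equalities (which identify some variables or set differences to zero), and write the remaining inequalities. The result is an LP in one or two free variables whose vertices are enumerated to obtain the stated minima. Equality patterns that are absent by the negated earlier hypotheses give strict inequalities, which make the constraints with larger $|F|$ redundant.

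\textbf{Case (f).} If none of the patterns occurs, some $e_i$ belongs to no optimal solution. Its cost can then be increased arbitrarily, as in Theorem~\ref{two_upp}(c), so $\uptol(E)=\infty$.
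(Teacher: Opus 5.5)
Your route for (a) differs from the paper's. The paper never evaluates LP1 in closed form. It shows that LP1 has the same value as the LP with right-hand sides $\uptol(e_i)$, $\uptol(e_i,e_j)$ and $f_c(D_{-}(E))-c^\ast$, and then reads $V_{1,2,3}$ off that LP by a case analysis of tight constraints. Your route can work, but both of its ingredients are false as you state them, so the proof of (a) has a genuine gap.

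First, the covering dual of LP1 also has vertices that put weight $1$ on two pair constraints, with value $b_{ij}+b_{ik}$. These are not dominated by anything in your list. Take $c(e_1)=c(e_2)=c(e_3)=0$, $D=\{\{e_1,e_2,e_3\},\{e_3,x\},\{e_2,y\},\{e_1,z\}\}$, $c(x)=c(y)=1$ and $c(z)=10$. Then $a_i=d=\infty$, $b_{12}=b_{13}=1$, $b_{23}=10$, and $\uptol(E)=2$, attained at $(\alpha_1,\alpha_2,\alpha_3)=(0,1,1)$. Your formula gives $6$.

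Second, your translation of the tolerances drops the $b$'s. $D_{-}(e_i)$ contains every pattern avoiding $e_i$, so $\uptol(e_i)=\min\{a_i,b_{ij},b_{ik},d\}$. In Theorem~\ref{two_upp}(a) applied to $\{e_i,e_j\}$, the third element is free. Hence $f_c(D_{\sim}(e_i;e_j))-c^\ast=\min\{a_i,b_{ik}\}$ and $f_c(D_{-}(e_i,e_j))-c^\ast=\min\{b_{ij},d\}$, so $\uptol(e_i,e_j)=\min\{\min\{a_i,b_{ik}\}+\min\{a_j,b_{jk}\},\,b_{ij},\,d\}$.

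The two errors cancel exactly in your bookkeeping: your version of $V_{1,2,3}$ equals your LP value. In the example both are $6$, while the true common value is $2$. So the planned verification would succeed and still prove nothing.

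With both corrected, your route is actually shorter than the paper's. Every feasible point of LP1 satisfies $\alpha_i\le\uptol(e_i)$ and $\alpha_i+\alpha_j\le\uptol(e_i,e_j)$, which gives value $\le V_{1,2,3}$. Conversely, each dual vertex value dominates a term of $V_{1,2,3}$, for example $b_{12}+b_{13}\ge\uptol(e_2)+\uptol(e_1,e_3)$ and $a_1+a_2+a_3\ge\uptol(e_1)+\uptol(e_2,e_3)$. That gives $\ge$. The infinite case still needs the separate treatment the paper gives.

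In (b) you may not take $|E'|=1$, because the hypothesis only provides optimal solutions meeting $E$ in exactly two elements. Even if a one-element pattern were optimal, the three pair equalities would force $\alpha_1=\alpha_2=\alpha_3=0$ rather than leave a common free $\alpha$. The bounds you write also do not follow from $|E'|=1$, which is why the factor $\tfrac32$ would not come out.

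Take $E'=\{e_1,e_2\}$, as the paper does. The equalities give $\alpha_1=\alpha_2=\alpha_3=\alpha$, and each remaining constraint reads $c^\ast+2\alpha\le f_c(D_{\sim}(E\setminus F;F))+|F|\alpha$. So $|F|=1$ gives $\alpha\le f_c(D_{\sim}(E\setminus\{e_i\};e_i))-c^\ast$, and $F=\emptyset$ gives $2\alpha\le f_c(D_{-}(E))-c^\ast$. Only $F=E$ is redundant; the two-element $F$ are the equalities, not redundant inequalities.

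Likewise, your claim for (c)--(e) that non-optimal patterns give only redundant constraints fails in (c). With $E'=\{e_1,e_2\}$ and $\alpha_3=\alpha_2$, the pattern $F=\{e_2,e_3\}$ gives $\alpha_1-\alpha_2\le f_c(D_{\sim}(e_1;e_2,e_3))-c^\ast$. This constraint is binding and produces the first term of the minimum. The paper then maximizes $\alpha_1+2\alpha_2$ by first fixing $\alpha_2=\min\{f_c(D_{\sim}(e_2,e_3;e_1))-c^\ast,\,f_c(D_{-}(E))-c^\ast\}$. Your sketches of (d), (e) and (f) match the paper.
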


\begin{proof}
In the following let $S^\ast$ be an optimal solution for $\prob$.

\begin{itemize}
\item[\bf (a)] W.l.o.g., assume that $e_1, e_2, e_3 \in S^\ast$. In this case EUL looks like:
\begin{align}
\nonumber \text{maximize} \quad \alpha_1 + \alpha_2 + \alpha_3 \quad & \\
\text{subject to} \quad \alpha_1 + \alpha_2 + \alpha_3 \quad & = \quad \alpha_1 + \alpha_2 + \alpha_3 \\
\label{upp3a1}
c^\ast + \alpha_1 + \alpha_2 + \alpha_3 \quad & \leq \quad f_c(D_{\sim}(e_3; e_1, e_2)) + \alpha_1 + \alpha_2 \\
c^\ast + \alpha_1 + \alpha_2 + \alpha_3 \quad & \leq \quad f_c(D_{\sim}(e_2; e_1, e_3)) + \alpha_1 + \alpha_3 \\
c^\ast + \alpha_1 + \alpha_2 + \alpha_3 \quad & \leq \quad f_c(D_{\sim}(e_1; e_2, e_3)) + \alpha_2 + \alpha_3 \\
c^\ast + \alpha_1 + \alpha_2 + \alpha_3 \quad & \leq \quad f_c(D_{\sim}(e_2, e_3; e_1)) + \alpha_1 \\
c^\ast + \alpha_1 + \alpha_2 + \alpha_3 \quad & \leq \quad f_c(D_{\sim}(e_1, e_3; e_2)) + \alpha_2 \\
c^\ast + \alpha_1 + \alpha_2 + \alpha_3 \quad & \leq \quad f_c(D_{\sim}(e_1, e_2; e_3)) + \alpha_3 \\
\label{upp3a7}
c^\ast + \alpha_1 + \alpha_2 + \alpha_3 \quad & \leq \quad f_c(D_{\sim}(E; \emptyset)) \\
\nonumber \alpha_1, \phantom{+} \alpha_2, \phantom{+} \alpha_3 \quad &\geq\quad 0
\end{align}

Note that the inequalities~\eqref{upp3a1}--\eqref{upp3a7} can originate from the equality constraints~\eqref{up_lp_eq}
or from the inequality constraints~\eqref{up_lp_ineq}.

This LP can be rewritten as
\begin{align}
\nonumber \text{maximize} \quad \alpha_1 + \alpha_2 + \alpha_3 \quad & \\
\text{subject to} \quad \alpha_1 + \alpha_2 + \alpha_3 \quad & \leq \quad f_c(D_{-}(E)) - c^\ast \\
\alpha_1 + \alpha_2 \phantp \phantom{\alpha_3} \quad & \leq \quad f_c(D_{\sim}(e_1, e_2; e_3)) - c^\ast \\
\alpha_1 \phantp \phantom{\alpha_2} + \alpha_3 \quad & \leq \quad f_c(D_{\sim}(e_1, e_3; e_2)) - c^\ast  \\
\alpha_2 + \alpha_3 \quad & \leq \quad f_c(D_{\sim}(e_2, e_3; e_1)) - c^\ast \\
\alpha_1 \phantp \phantom{\alpha_2 + \alpha_3} \quad & \leq \quad f_c(D_{\sim}(e_1; e_2, e_3)) - c^\ast \\
\alpha_2 \phantp \phantom{\alpha_3} \quad & \leq \quad f_c(D_{\sim}(e_2; e_1, e_3)) - c^\ast \\
\alpha_3 \quad & \leq \quad f_c(D_{\sim}(e_3; e_1, e_2)) - c^\ast \\
\nonumber \alpha_1, \phantom{+} \alpha_2, \phantom{+} \alpha_3 \quad &\geq\quad 0
\end{align}

We call the LP above LP1. Let $\vv{\alpha^\ast} = (\alpha_{1}^\ast, \alpha_{2}^\ast, \alpha_{3}^\ast)$ be an optimal
solution of LP1, and let $\alpha^\ast = \alpha_{1}^\ast + \alpha_{2}^\ast +
\alpha_{3}^\ast$. Then $\alpha^\ast = \uptol(E)$ holds. 

Consider the following LP: 
\begin{align}
\text{maximize} \quad \alpha_1 + \alpha_2 + \alpha_3 \nonumber \quad & \\
\text{subject to} \quad \alpha_1 + \alpha_2 + \alpha_3 \quad &\leq\quad f_c(D_{-}(E)) - c^\ast \label{U1} \\
\alpha_1 + \alpha_2 \phantp \phantom{\alpha_3} \quad &\leq\quad \uptol(e_1, e_2) \label{U2} \\
\alpha_1 \phantp \phantom{\alpha_2} + \alpha_3 \quad &\leq\quad \uptol(e_1, e_3) \label{U3} \\
\alpha_2 + \alpha_3 \quad &\leq\quad \uptol(e_2, e_3) \label{U4} \\
\alpha_1 \phantp \phantom{\alpha_2} \phantp \phantom{\alpha_3} \quad &\leq\quad \uptol(e_1)  \label{U5} \\
\alpha_2 \phantp \phantom{\alpha_3} \quad &\leq\quad \uptol(e_2) \label{U6} \\
\alpha_3 \quad &\leq\quad \uptol(e_3) \label{U7} \\
\alpha_1, \phantom{+} \alpha_2, \phantom{+} \alpha_3 \quad &\geq\quad 0 \nonumber
\end{align}

We call the LP above LP2. Let $\vv{\alpha'} = (\alpha_{1}', \alpha_{2}', \alpha_{3}')$ be an optimal
solution of LP2, and let $\alpha' = \alpha_{1}' + \alpha_{2}' + \alpha_{3}'$. 

We show that $\alpha^\ast = \alpha'$.

The following statements are equivalent:
\begin{itemize}
\item[$\bullet$] $\alpha^\ast = \uptol(E) = \infty$.
\item[$\bullet$] $\exists\; i \in [3] $ with $ \alpha^\ast_i = \infty$.
\item[$\bullet$] $\exists\; i \in [3]$ such that $\forall\, F \subseteq E, e_i \in F$ we have $f_c(D_{\sim}(F; E \setminus F)) = \infty$.
\item[$\bullet$] $\exists\; i \in [3] $ with $\uptol(e_i) = \infty$.
\item[$\bullet$] $f_c(D_{-}(E)) = \infty$ and $\exists\; i \in [3]$ such that $\forall\; F \subsetneq E, F \not = \emptyset, e_i \in F$ we have $\uptol(F) = \infty$. 
\item[$\bullet$] $\exists\; i \in [3] $ with $ \alpha'_i = \infty$.
\item[$\bullet$] $\alpha' = \infty$.
\end{itemize}

Thus, in the following we always consider the case that 
$0 \le \alpha^\ast, \alpha' < \infty$.

Now we show that
$\alpha^\ast = \alpha'$. We divide the proof into two parts:

\begin{itemize}
\item[$\bullet$] $\alpha' \leq \alpha^\ast$.

By Theorem~\ref{single_upp}, $\forall\, i \in [3]$ it holds that
\begin{align*}
\uptol(e_i) = f_c(D_{-}(e_i)) - c^\ast \leq f_c(D_{\sim}(\{e_i\}; E \setminus \{e_i\})) - c^\ast.
\end{align*}

By Theorem~\ref{two_upp}(a), $\forall\, i,j \in [3], i\not= j$ it holds that
\begin{align*}
\uptol(e_i, e_j) & = \min \{\uptol(e_i) + \uptol(e_j), f_c(D_{-}(e_i, e_j))-c^\ast\} \\
& \leq f_c(D_{-}(e_i, e_j))-c^\ast \\
& \leq f_c(D_{\sim}(\{e_i, e_j\}; E \setminus \{e_i, e_j\})) - c^\ast.
\end{align*}

Therefore, the optimal objective value of LP2 is a lower bound for the optimal objective value of the LP1 and thus $\alpha' \leq \alpha^\ast$.

\item[$\bullet$] $\alpha^\ast \leq \alpha'$.
We show this by considering two cases.
\begin{itemize}
\item[$\circ$]  
The equality $\sum_{i = 1}^{3} \alpha_{i}' = f_c(D_{-}(E)) - c^\ast$ holds. Then
it holds that
\begin{align*}
\alpha^\ast = \sum_{i = 1}^{3} \alpha_{i}^\ast \leq  f_c(D_{-}(E)) - c^\ast = \sum_{i = 1}^{3} \alpha_{i}' = \alpha'.
\end{align*}

Thus, $\alpha^\ast \leq \alpha'$ holds. 

\item[$\circ$] 
The inequality $\sum_{i = 1}^{3} \alpha_{i}' < f_c(D_{-}(E)) - c^\ast$ holds. 

Assume that
$\alpha^\ast > \alpha'$ holds. 
Then at least one constraint of LP2 is not fulfilled for 
$\vv{{\alpha^\ast}}$ with $ F \subsetneq E, F \not= \emptyset$. 

Let $F = \{e_{i_1}, e_{i_2}, \ldots, e_{i_s}\}$, $ s \in [2] $. It holds that
\begin{align}
\label{frel}
\sum_{j=1}^s \alpha_{i_j}^\ast > \uptol(F).
\end{align}
Let $\vv{\alpha^\ast_F} = (\alpha^\ast_{i_1}, \alpha^\ast_{i_2}, \ldots, \alpha^\ast_{i_s})$. 
Since~\eqref{frel},
there is a feasible 
solution~$S'$ which is optimal for 
$\prob$, but not optimal for 
$\prob_{c_{\vv{\alpha^\ast_F},F}}$.
As $ e_1, e_2, e_3 \in S^\ast$, the cost
increase from $\prob$ to
$\prob_{c_{\vv{\alpha^\ast_F},F}}$
for $S'$ is not larger than that for $S^\ast$.
Thus, $S^\ast$ is optimal for $\prob$,
but not optimal for 
$\prob_{c_{\vv{\alpha^\ast_F},F}}$.
Let $S''$ be an optimal solution of $\prob_{c_{\vv{\alpha^\ast_F},F}}$. Again, the cost increase from
$\prob_{c_{\vv{\alpha^\ast_F},F}}$
to $\prob_{c_{\vv{\alpha^\ast},E}}$
for $S''$ is not larger than that for 
$S^\ast$, therefore $f_{c_{\vv{\alpha^\ast},E}}(S'') < f_{c_{\vv{\alpha^\ast},E}}(S^\ast)$. 
Thus, $S^\ast$ is not optimal for 
$\prob_{c_{\vv{\alpha^\ast},E}}$.
This is a contradiction to
$\alpha^\ast = \uptol(E)$. It follows that $\alpha^\ast \leq \alpha'$.
\end{itemize}
\end{itemize}

Thus, $\alpha' = \uptol(E)$
holds.

Now let 
\begin{align*}
V_{1,2,3} = \min \left\{
\begin{aligned}
& f_c(D_{-}(E)) - c^\ast, \uptol(e_1) + \uptol(e_2, e_3), \\
& \uptol(e_2) + \uptol(e_1, e_3), \uptol(e_3) + \uptol(e_1, e_2), \\
& \tfrac{1}{2} \cdot (\uptol(e_1, e_2) + \uptol(e_1, e_3) + \uptol(e_2, e_3))
\end{aligned}
\right\}.
\end{align*}

We show that $V_{1,2,3} = \alpha'$.

First, let $ \alpha' = \infty$.
Then there exists an $ i \in [3] $ with 
$ \alpha_i' = \infty$. W.l.o.g., 
let $ \alpha_1' = \infty$. Then because of
\eqref{U1}, \eqref{U2}, \eqref{U3}, \eqref{U5} it holds that
\begin{align*}
f_c(D_{-}(E)) - c^\ast = 
\uptol(e_1, e_2) = 
\uptol(e_1, e_3) = \uptol(e_1) = \infty.  
\end{align*}
It follows that $V_{1,2,3} = \infty $.

Second, let $V_{1,2,3} = \infty $. 
If there exists $i \in [3] $ such that 
$\uptol(e_i) = \infty$,
then by Theorem~\ref{bounds_upp}, 
$ \alpha' = \uptol(E) 
\geq \uptol(e_i) = \infty$. 
If such $i \in [3] $ does not exist, then 
$ \uptol(e_1) $, $ \uptol(e_2) $,
$ \uptol(e_3) $ are finite, 
but by Theorem~\ref{two_upp}(a), $
V_{1,2,3} \leq \uptol(e_1, e_2) + \uptol(e_3) \leq \uptol(e_1) + \uptol(e_2) + \uptol(e_3) < \infty$, 
leading to a contradiction.
Thus, this case cannot occur.

So, in the following we can assume that both $V_{1,2,3} \not= \infty $ and $\alpha' \not= \infty $ hold. 

From constraints \eqref{U1}--\eqref{U7} we have $\alpha' \leq V_{1,2,3}$. Next we show that $V_{1,2,3} \leq \alpha'$.

Observe that for each $\alpha_i', i \in [3]$ there should be at
least one constraint among \eqref{U1}--\eqref{U7}, 
where $\alpha_i$ occurs 
and which is satisfied with
equality. 

\begin{itemize}
\item[\bf (1)] Constraint \eqref{U1} is satisfied with equality. 

Then $\alpha' = f_c(D_{-}(E))-c^\ast$.

\item[\bf (2)] Constraints \eqref{U2} and \eqref{U7} are satisfied with
equality. 

Then $\alpha' = \uptol(e_1, e_2) +
\uptol(e_3)$ holds.

\item[\bf (3)] Constraints \eqref{U3} and \eqref{U6} are satisfied with
equality. 

Then $\alpha' = \uptol(e_1, e_3) +
\uptol(e_2)$ holds.

\item[\bf (4)] Constraints \eqref{U4} and \eqref{U5} are satisfied with
equality. 

Then $\alpha' = \uptol(e_2, e_3) +
\uptol(e_1)$ holds.

\item[\bf (5)] Constraints \eqref{U2}, \eqref{U3} and \eqref{U4} are satisfied with equality. 

Then $\alpha' = \frac{1}{2}(\uptol(e_1, e_2) + \uptol(e_1, e_3) + \uptol(e_2, e_3))$ holds.

\item[\bf (6)] Exactly two of three constraints \eqref{U2}, \eqref{U3} and
\eqref{U4} are satisfied with equality and any of the previous cases does not
occur. 

W.l.o.g., assume that constraints \eqref{U2} and
\eqref{U3} are satisfied, and we have strict inequality for \eqref{U4}. Also, to
avoid previous cases, assume that we have strict inequalities for constraints
\eqref{U1}, \eqref{U6} and \eqref{U7}. 

Then we have the following:
\begin{align*}
\alpha_1' + \alpha_2' + \alpha_3' \quad & < \quad f_c(D_{-}(E))-c^\ast \\
\alpha_1' + \alpha_2' \phantp\phantom{\alpha_3} \quad &  = \quad \uptol(e_1, e_2) \\
\alpha_1' \phantp\phantom{\alpha_2} + \alpha_3' \quad & = \quad \uptol(e_1, e_3) \\
\alpha_2' + \alpha_3' \quad & < \quad \uptol(e_2, e_3) \\
\alpha_1' \phantp\phantom{\alpha_2'} \phantp\phantom{\alpha_3'} \quad & \leq  \quad \uptol(e_1) \\
\alpha_2' \phantp\phantom{\alpha_3'} \quad & < \quad \uptol(e_2) \\
\alpha_3' \quad & < \quad \uptol(e_3)
\end{align*}

Note that if $\alpha_1' = 0$
holds, then 
$\uptol(e_2) > \alpha_2' = \alpha_1' + \alpha_2' = \uptol(e_1, e_2)$, which is impossible by Theorem~\ref{bounds_upp}. Therefore, $\alpha_1' > 0$ holds.

Then there exists $\varepsilon > 0$ such that
\begin{alignat*}{3}
& \alpha_1' - \varepsilon + \alpha_2' + \varepsilon + \alpha_3' + \varepsilon 
&\quad \!\!=\!\!\quad& \alpha_1' + \alpha_2' + \alpha_3' + \phantom{2}\varepsilon 
&\quad \!\leq\!\quad& f_c(D_{-}(E)) \!-\! c^\ast \\
& \alpha_1' - \varepsilon + \alpha_2' + \varepsilon 
&\quad \!\!=\!\!\quad& \alpha_1' + \alpha_2' 
&\quad \!=\!\quad& \uptol(e_1, e_2) \\
& \alpha_1' - \varepsilon \phantp\phantom{\alpha_2' + \varepsilon} + \alpha_3' + \varepsilon 
&\quad \!\!=\!\!\quad& \alpha_1' \phantp\phantom{\alpha_2'} + \alpha_3' 
&\quad \!=\!\quad& \uptol(e_1, e_3) \\
& \phantom{\alpha_1' - \varepsilon} \phantp \alpha_2' + \varepsilon + \alpha_3' + \varepsilon 
&\quad \!\!=\!\!\quad& \phantom{\alpha_1'} \phantp \alpha_2' + \alpha_3' + 2\varepsilon 
&\quad \!\leq\!\quad& \uptol(e_2, e_3) \\
& &\quad \quad& \alpha_1' \phantp\phantom{\alpha_2' + \alpha_3'} - \phantom{2}\varepsilon 
&\quad \!<\!\quad& \uptol(e_1) \\
& &\quad \quad& \phantom{\alpha_1'}\phantp  \alpha_2' \phantp\phantom{\alpha_3'} + \phantom{2}\varepsilon 
&\quad \!\leq\!\quad& \uptol(e_2) \\
& &\quad \quad& \phantom{\alpha_1' + \alpha_2'} \phantp \alpha_3' + \phantom{2}\varepsilon 
&\quad \!\leq\!\quad& \uptol(e_3)
\end{alignat*}

So, $(\alpha_1' - \varepsilon$, $\alpha_2' + \varepsilon$, $\alpha_3' +
\varepsilon)$ satisfies all constraints \eqref{U1}--\eqref{U7}, making it a
feasible solution for LP2. 
Furthermore, $\alpha_1' - \varepsilon + \alpha_2' +
\varepsilon + \alpha_3' + \varepsilon = \alpha_1' + \alpha_2' + \alpha_3' +
\varepsilon > \alpha_1' + \alpha_2' + \alpha_3' = \alpha'$ holds. Therefore,
$\vv{\alpha'} = (\alpha_{1}', \alpha_{2}', \alpha_{3}')$ is not an optimal
solution of LP2, which is a contradiction. Thus, case \textbf{(6)} is
impossible.

\item[\bf (7)] Constraints \eqref{U5}, \eqref{U6} and \eqref{U7} are satisfied
with equality. 

Then $\alpha' = \uptol(e_1) + \uptol(e_2) +
\uptol(e_3)$ 
and $\uptol(e_1) + \uptol(e_2) = \alpha_{1}' + \alpha_{2}' \leq \uptol(e_1, e_2)$
holds. 
By Theorem~\ref{two_upp}(a), $
\uptol(e_1, e_2)
\leq 
\uptol(e_1) + \uptol(e_2) 
$ holds. Thus, $\uptol(e_1, e_2) = \uptol(e_1) + \uptol(e_2)$
follows. 
Therefore, $\alpha' = \uptol(e_1) + \uptol(e_2) + \uptol(e_3) = \uptol(e_1, e_2) + \uptol(e_3)$ holds.
\end{itemize}

Thus, $V_{1,2,3} \leq \alpha'$ and $V_{1,2,3} = \uptol(E)$
holds.

\item[\bf (b)]  W.l.o.g., let $e_1, e_2 \in S^\ast$, $e_3 \not\in S^\ast$.
In this case EUL looks like:
\begin{align}
\nonumber \text{maximize} \quad \alpha_1 + \alpha_2 + \alpha_3 \quad & \\
\text{subject to} \quad \phantom{\alpha_1} \phantp \alpha_1 + \alpha_2 \quad & = \quad \alpha_1 + \alpha_2 \\
\alpha_1 + \alpha_2 \quad & = \quad \alpha_1 + \alpha_3 \\
\alpha_1 + \alpha_2 \quad & = \quad \alpha_2 + \alpha_3 \\
c^\ast + \alpha_1 + \alpha_2 \quad & \leq \quad f_c(D_{\sim}(\emptyset; E)) + \alpha_1 + \alpha_2 + \alpha_3 \label{upp3b1} \\
c^\ast + \alpha_1 + \alpha_2 \quad & \leq \quad f_c(D_{\sim}(e_2, e_3; e_1)) + \alpha_1 \label{upp3b2} \\
c^\ast + \alpha_1 + \alpha_2 \quad & \leq \quad f_c(D_{\sim}(e_1, e_3; e_2)) + \alpha_2 \label{upp3b3} \\
c^\ast + \alpha_1 + \alpha_2 \quad & \leq \quad f_c(D_{\sim}(e_1, e_2; e_3)) + \alpha_3 \label{upp3b4} \\
c^\ast + \alpha_1 + \alpha_2 \quad & \leq \quad f_c(D_{\sim}(E; \emptyset)) \label{upp3b5} \\
\nonumber \alpha_1, \phantom{+} \alpha_2, \phantom{+} \alpha_3 \quad &\geq\quad 0
\end{align}

Note that since \textbf{(a)} does not hold, the inequality~\eqref{upp3b1} originates from the inequality constraints~\eqref{up_lp_ineq}. The inequalities~\eqref{upp3b2}--\eqref{upp3b5} can originate from the equality constraints~\eqref{up_lp_eq}
or from the inequality constraints~\eqref{up_lp_ineq}.

This LP can be rewritten as
\begin{align}
\nonumber \text{maximize} \quad 3 \alpha \quad & \\
\text{subject to} \quad \phantom{3} \alpha \quad & \geq \quad c^\ast - f_c(D_{+}(E)) \label{upp3b1_} \\
\alpha \quad & \leq \quad f_c(D_{\sim}(e_2, e_3; e_1)) - c^\ast \label{upp3b2_} \\ 
\alpha \quad & \leq \quad f_c(D_{\sim}(e_1, e_3; e_2)) - c^\ast \label{upp3b3_} \\ 
\alpha \quad & \leq \quad f_c(D_{\sim}(e_1, e_2; e_3)) - c^\ast \label{upp3b4_} \\ 
2 \alpha \quad & \leq \quad f_c(D_{-}(E)) - c^\ast \label{upp3b5_} \\ 
\nonumber \alpha \quad & \geq \quad 0
\end{align}

Since $f_c(D_{+}(E)) > c^\ast$ holds, we have $c^\ast - f_c(D_{+}(E)) < 0$ and thus inequality \eqref{upp3b1_} is redundant. 

The optimal objective value of this LP is
\begin{align*}
\min \left\{
\begin{aligned}
&3 \cdot (f_c(D_{\sim}(e_2, e_3; e_1)) - c^\ast),  3 \cdot (f_c(D_{\sim}(e_1, e_3; e_2)) - c^\ast), \\
&3 \cdot (f_c(D_{\sim}(e_1, e_2; e_3)) - c^\ast),  \tfrac{3}{2} \cdot (f_c(D_{-}(E)) - c^\ast)
\end{aligned} \right\}.
\end{align*}

\item[\bf (c)] 
W.l.o.g., let $e_1, e_2 \in S^\ast$, $e_3 \not\in S^\ast$.
In this case EUL looks like:
\begin{align}
\nonumber \text{maximize} \quad \alpha_1 + \alpha_2 + \alpha_3 \quad & \\
\text{subject to} \quad \alpha_1 + \alpha_2 \quad & = \quad \alpha_1 + \alpha_2 \\
\alpha_1 + \alpha_2 \quad & = \quad \alpha_1 + \alpha_3 \\
c^\ast + \alpha_1 + \alpha_2 \quad & \leq \quad f_c(D_{\sim}(\emptyset; E)) + \alpha_1 + \alpha_2 + \alpha_3 \label{upp3c1} \\
c^\ast + \alpha_1 + \alpha_2 \quad & \leq \quad f_c(D_{\sim}(e_1; e_2, e_3)) + \alpha_2 + \alpha_3 \label{upp3c2} \\
c^\ast + \alpha_1 + \alpha_2 \quad & \leq \quad f_c(D_{\sim}(e_2, e_3; e_1)) + \alpha_1 \label{upp3c3} \\
c^\ast + \alpha_1 + \alpha_2 \quad & \leq \quad f_c(D_{\sim}(e_1, e_3; e_2)) + \alpha_2 \label{upp3c4} \\
c^\ast + \alpha_1 + \alpha_2 \quad & \leq \quad f_c(D_{\sim}(e_1, e_2; e_3)) + \alpha_3 \label{upp3c5} \\
c^\ast + \alpha_1 + \alpha_2 \quad & \leq \quad f_c(D_{\sim}(E; \emptyset)) \label{upp3c6} \\
\nonumber \alpha_1, \phantom{+} \alpha_2, \phantom{+} \alpha_3 \quad &\geq\quad 0
\end{align}

Note that since \textbf{(a)} and \textbf{(b)} do not hold, the inequalities~\eqref{upp3c1}--\eqref{upp3c2} originate from the inequality constraints~\eqref{up_lp_ineq}. The inequalities~\eqref{upp3c3}--\eqref{upp3c6} can originate from the equality constraints~\eqref{up_lp_eq}
or from the inequality constraints~\eqref{up_lp_ineq}.

This LP can be rewritten as
\begin{align}
\nonumber \text{maximize} \quad \alpha_1 + 2\alpha_2 \quad & \\
\text{subject to} \quad \phantom{\alpha_1} \phantp \phantom{2} \alpha_2 \quad & \geq \quad c^\ast - f_c(D_{+}(E)) \label{upp3c1_} \\
\alpha_1 - \alpha_2 \quad & \leq \quad f_c(D_{\sim}(e_1; e_2, e_3)) - c^\ast \label{upp3c2_} \\
\alpha_2 \quad & \leq \quad f_c(D_{\sim}(e_2, e_3; e_1)) - c^\ast \label{upp3c3_} \\
\alpha_1 \phantp \phantom{\alpha_2} \quad & \leq \quad f_c(D_{\sim}(e_1, e_3; e_2)) - c^\ast \label{upp3c4_} \\
\alpha_1 \phantp \phantom{\alpha_2} \quad & \leq \quad f_c(D_{\sim}(e_1, e_2; e_3)) - c^\ast \label{upp3c5_} \\
\alpha_1 + \alpha_2 \quad & \leq \quad f_c(D_{-}(E)) - c^\ast \label{upp3c6_} \\
\nonumber \alpha_1, \phantom{+} \alpha_2 \quad &\geq\quad 0
\end{align}

Since $f_c(D_{+}(E)) > c^\ast$, we have $c^\ast - f_c(D_{+}(E)) < 0$  and thus inequality \eqref{upp3c1_} is redundant.

Let $\vv{\alpha^\ast} = (\alpha_{1}^\ast, \alpha_{2}^\ast)$ be the optimal solution of this LP. 

Observe that inequalities~\eqref{upp3c3_} and~\eqref{upp3c6_} are the only inequalities
that are an upper bound for $\alpha_{2}^\ast$ and further observe
that in the objective function, the coefficient of $\alpha_{2}$ is strictly larger than that of $\alpha_{1}$.
Because of these two observations, the optimal value $\alpha_{2}^\ast$ is exactly
the minimum of the right-hand sides of these 
inequalities~\eqref{upp3c3_} and~\eqref{upp3c6_},
i.e.,
$\alpha_{2}^\ast = \min \big\{ f_c(D_{\sim}(e_2, e_3; e_1)) - c^\ast, \;\; f_c(D_{-}(E)) - c^\ast \big\}$ holds.
Now consider two cases:
\begin{itemize}
\item[$\bullet$] $\alpha_{2}^\ast = f_c(D_{-}(E)) - c^\ast$. 

Then by inequality~\eqref{upp3c6_} $\alpha_{1}^\ast = 0$ holds and thus,
it holds that
$\alpha_{1}^\ast + 2\alpha_{2}^\ast = 2 \cdot (f_c(D_{-}(E))
- c^\ast)$.

\item[$\bullet$] $\alpha_{2}^\ast = f_c(D_{\sim}(e_2, e_3; e_1)) - c^\ast$. 

Then by inequalities~\eqref{upp3c2_},~\eqref{upp3c4_},~\eqref{upp3c5_} and~\eqref{upp3c6_} it holds that
\begin{align*}
\alpha_1^\ast & \leq f_c(D_{\sim}(e_1; e_2, e_3)) - c^\ast + f_c(D_{\sim}(e_2, e_3; e_1)) - c^\ast, \\
\alpha_1^\ast & \leq f_c(D_{\sim}(e_1, e_3; e_2)) - c^\ast, \\
\alpha_1^\ast & \leq f_c(D_{\sim}(e_1, e_2; e_3)) - c^\ast, \\
\alpha_1^\ast & \leq f_c(D_{-}(E)) - c^\ast - (f_c(D_{\sim}(e_2, e_3; e_1)) - c^\ast).
\end{align*}

Thus, it holds that
\begin{align*}
\alpha_{1}^\ast \!+\! 2\alpha_{2}^\ast \!=\! \min\! \left\{
\begin{aligned}
    &f_c(D_{\sim}(e_{1}; e_{2}, e_{3})) \!-\! c^\ast \!+\! 3 \!\cdot\! (f_c(D_{\sim}(e_{2}, e_{3}; e_{1})) \!-\! c^\ast), \\ 
    &f_c(D_{\sim}(e_{1}, e_{3}; e_{2})) \!-\! c^\ast \!+\! 2 \!\cdot\! (f_c(D_{\sim}(e_{2}, e_{3}; e_{1})) \!-\! c^\ast), \\
    &f_c(D_{\sim}(e_{1}, e_{2}; e_{3})) \!-\! c^\ast \!+\! 2 \!\cdot\! (f_c(D_{\sim}(e_{2}, e_{3}; e_{1})) \!-\! c^\ast), \\
    &f_c(D_{\sim}(e_{2}, e_{3}; e_{1})) \!-\! c^\ast \!+\! f_c(D_{-}(E)) \!-\! c^\ast
\end{aligned}
\!\right\} \!.
\end{align*}

\end{itemize}

\smallskip

Therefore, the optimal objective value of this LP is 

\begin{align*}
\min \left\{
\begin{aligned}
    &f_c(D_{\sim}(e_{1}; e_{2}, e_{3})) - c^\ast + 3 \cdot (f_c(D_{\sim}(e_{2}, e_{3}; e_{1})) - c^\ast), \\ 
    &f_c(D_{\sim}(e_{1}, e_{3}; e_{2})) - c^\ast + 2 \cdot (f_c(D_{\sim}(e_{2}, e_{3}; e_{1})) - c^\ast), \\
    &f_c(D_{\sim}(e_{1}, e_{2}; e_{3})) - c^\ast + 2 \cdot (f_c(D_{\sim}(e_{2}, e_{3}; e_{1})) - c^\ast), \\
    &f_c(D_{\sim}(e_{2}, e_{3}; e_{1})) - c^\ast + f_c(D_{-}(E)) - c^\ast, \\
    &2 \cdot (f_c(D_{-}(E)) - c^\ast)
\end{aligned}
\right\}.
\end{align*}

\item[\bf (d)] 
W.l.o.g., let $e_1, e_2 \in S^\ast$, $e_3 \not\in S^\ast$.
In this case EUL looks like:
\begin{align}
\nonumber \text{maximize} \quad \alpha_1 + \alpha_2 + \alpha_3 \quad & \\
\text{subject to} \quad \phantom{\alpha_1} \phantp \alpha_1 + \alpha_2 \quad & = \quad \alpha_1 + \alpha_2 \\
\alpha_1 + \alpha_2 \quad & = \quad \alpha_3 \\
c^\ast + \alpha_1 + \alpha_2 \quad & \leq \quad f_c(D_{\sim}(\emptyset; E)) + \alpha_1 + \alpha_2 + \alpha_3 \label{upp3d1} \\
c^\ast + \alpha_1 + \alpha_2 \quad & \leq \quad f_c(D_{\sim}(e_2; e_1, e_3)) + \alpha_1 + \alpha_3 \label{upp3d2} \\
c^\ast + \alpha_1 + \alpha_2 \quad & \leq \quad f_c(D_{\sim}(e_1; e_2, e_3)) + \alpha_2 + \alpha_3 \label{upp3d3} \\
c^\ast + \alpha_1 + \alpha_2 \quad & \leq \quad f_c(D_{\sim}(e_2, e_3; e_1)) + \alpha_1 \label{upp3d4} \\
c^\ast + \alpha_1 + \alpha_2 \quad & \leq \quad f_c(D_{\sim}(e_1, e_3; e_2)) + \alpha_2 \label{upp3d5} \\
c^\ast + \alpha_1 + \alpha_2 \quad & \leq \quad f_c(D_{\sim}(E; \emptyset)) \label{upp3d6} \\
\nonumber \alpha_1, \phantom{+} \alpha_2, \phantom{+} \alpha_3 \quad &\geq\quad 0
\end{align}

Note that since \textbf{(a)}, \textbf{(b)} and \textbf{(c)} do not hold, the inequalities~\eqref{upp3d1}--\eqref{upp3d3} originate from the inequality constraints~\eqref{up_lp_ineq}. The inequalities~\eqref{upp3d4}--\eqref{upp3d6} can originate from the equality constraints~\eqref{up_lp_eq}
or from the inequality constraints~\eqref{up_lp_ineq}.

This LP can be rewritten as
\begin{align}
\nonumber \text{maximize} \quad 2\alpha_1 + 2\alpha_2 \quad & \\
\text{subject to} \quad \phantom{2} \alpha_1 + \phantom{2} \alpha_2 \quad & \geq \quad c^\ast - f_c(D_{+}(E)) \label{upp3d1_} \\
\phantom{2} \alpha_1 \phantp \phantom{2} \phantom{\alpha_2} \quad & \geq \quad c^\ast - f_c(D_{\sim}(e_2; e_1, e_3)) \label{upp3d2_} \\
\alpha_2 \quad & \geq \quad c^\ast - f_c(D_{\sim}(e_1; e_2, e_3)) \label{upp3d3_} \\
\alpha_2 \quad & \leq \quad f_c(D_{\sim}(e_2, e_3; e_1)) - c^\ast \label{upp3d4_} \\
\phantom{2} \alpha_1 \phantp \phantom{2} \phantom{\alpha_2} \quad & \leq \quad f_c(D_{\sim}(e_1, e_3; e_2)) - c^\ast \label{upp3d5_} \\
\phantom{2} \alpha_1 + \phantom{2} \alpha_2 \quad & \leq \quad f_c(D_{-}(E)) - c^\ast \label{upp3d6_} \\
\nonumber \phantom{2} \alpha_1, \phantom{+} \phantom{2} \alpha_2 \quad &\geq\quad 0
\end{align}

Since $f_c(D_{+}(E)) > c^\ast$ holds, we have $c^\ast - f_c(D_{+}(E)) < 0$  and thus inequality \eqref{upp3d1_} is redundant. 

Since $f_c(D_{\sim}(e_2; e_1, e_3)) > c^\ast$ holds, we have $c^\ast - f_c(D_{\sim}(e_2; e_1, e_3)) < 0$  and thus inequality \eqref{upp3d2_} is redundant. 

Since $f_c(D_{\sim}(e_1; e_2, e_3)) > c^\ast$, we have $c^\ast - f_c(D_{\sim}(e_1; e_2, e_3)) < 0$  and thus inequality \eqref{upp3d3_} is redundant. 

The optimal objective value of this LP is
\begin{align*} 
\min \left\{
\begin{aligned}
    & 2 \cdot 
    f_c(D_{\sim}(e_{2}, e_{3}; e_{1})) 
    - c^\ast + 
    (f_c(D_{\sim}(e_{1}, e_{3}; e_{2})) 
    - c^\ast), \\ & 2 \cdot (f_c(D_{-}(E)) - c^\ast)
\end{aligned}
\right\}.
\end{align*}

\item[\bf (e)] W.l.o.g., assume that $e_1\in S^\ast$, $e_2, e_3 \not\in S^\ast$.
In this case EUL looks like:
\begin{align}
\nonumber \text{maximize} \quad \alpha_1 + \alpha_2 + \alpha_3 \quad & \\
\text{subject to} \quad \phantom{\alpha_1} \phantp \phantom{\alpha_1} \phantp \alpha_1 \quad & = \quad \alpha_1 \\
\alpha_1 \quad & = \quad \alpha_2 \\
\alpha_1 \quad & = \quad \alpha_3 \\
c^\ast + \alpha_1 \quad & \leq \quad f_c(D_{\sim}(\emptyset; E)) + \alpha_1 + \alpha_2 + \alpha_3 \label{upp3e1} \\
c^\ast + \alpha_1 \quad & \leq \quad f_c(D_{\sim}(e_3; e_1, e_2)) + \alpha_1 + \alpha_2 \label{upp3e2} \\
c^\ast + \alpha_1 \quad & \leq \quad f_c(D_{\sim}(e_2; e_1, e_3)) + \alpha_1 + \alpha_3 \label{upp3e3} \\
c^\ast + \alpha_1 \quad & \leq \quad f_c(D_{\sim}(e_1; e_2, e_3)) + \alpha_2 + \alpha_3 \label{upp3e4} \\
c^\ast + \alpha_1 \quad & \leq \quad f_c(D_{\sim}(E; \emptyset)) \label{upp3e5} \\
\nonumber \alpha_1, \phantom{+} \alpha_2, \phantom{+} \alpha_3 \quad &\geq\quad 0
\end{align}

Note that since \textbf{(a)}, \textbf{(b)}, \textbf{(c)} and \textbf{(d)} do not hold, the inequalities~\eqref{upp3e1}--\eqref{upp3e4} originate from the inequality constraints~\eqref{up_lp_ineq}. The inequality~\eqref{upp3e5} can originate from the equality constraints~\eqref{up_lp_eq}
or from the inequality constraints~\eqref{up_lp_ineq}.

This LP can be rewritten as
\begin{align}
\nonumber \text{maximize} \quad 3 \alpha \quad & \\
\text{subject to} \quad 2 \alpha \quad & \geq \quad c^\ast - f_c(D_{+}(E)) \label{upp3e1_} \\
\alpha \quad & \geq \quad c^\ast - f_c(D_{\sim}(e_3; e_1, e_2)) \label{upp3e2_} \\ 
\alpha \quad & \geq \quad c^\ast - f_c(D_{\sim}(e_2; e_1, e_3))  \label{upp3e3_} \\ 
\alpha \quad & \geq \quad c^\ast - f_c(D_{\sim}(e_1; e_2, e_3))  \label{upp3e4_} \\ 
\alpha \quad & \leq \quad f_c(D_{-}(E)) - c^\ast  \label{upp3e5_} \\ 
\nonumber \alpha \quad & \geq \quad 0
\end{align}

Since $f_c(D_{+}(E)) > c^\ast$, we have $c^\ast - f_c(D_{+}(E)) < 0$  and thus inequality \eqref{upp3e1_} is redundant. 

Since $f_c(D_{\sim}(e_3; e_1, e_2)) > c^\ast$, we have $c^\ast - f_c(D_{\sim}(e_3; e_1, e_2)) < 0$  and thus inequality \eqref{upp3e2_} is redundant. 

Since $f_c(D_{\sim}(e_2; e_1, e_3)) > c^\ast$, we have $c^\ast - f_c(D_{\sim}(e_2; e_1, e_3)) < 0$  and thus inequality \eqref{upp3e3_} is redundant. 

Since $f_c(D_{\sim}(e_1; e_2, e_3)) > c^\ast$, we have $c^\ast - f_c(D_{\sim}(e_1; e_2, e_3)) < 0$  and thus inequality \eqref{upp3e4_} is redundant. 

The optimal objective value of this LP is
$3 \cdot \big(f_c(D_{-}(E)) - c^\ast \big)$.    

\item[\bf (f)] In this case, there exists $i \in [3]$ so that $e_i$ does not
belong to any optimal solution.
The cost of this element $e_i$ 
can be increased with any $ \alpha > 0 $ without any optimal solution becoming non-optimal, so $\uptol(E) = \infty$. \qedhere
\end{itemize}
\end{proof}

\section{Computation of Set Lower Tolerances}\label{sec4}

\subsection{General Results}\label{sec4_1}

The following theorem gives a more formal proof of the correctness of ELL for computing set lower tolerance, than the proof given in~\cite{JT26} for the correctness of Algorithm 2.

\begin{theorem}\label{teo_ell}
Let $\prob = (\elems, D, c,f_c) $ be a CSP
and $E =\{e_1,e_2,\dots,e_k\} \subseteq \elems$. 
Then the optimal objective value of ELL is equal to the set lower
tolerance $\lowtol(E)$. 
\end{theorem}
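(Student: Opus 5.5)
The plan is to prove that the feasible region of ELL coincides exactly with the set of nonnegative decrease vectors admitted in the definition of $\lowtol(E)$. Maximizing $\sum_{i=1}^k \alpha_i$ over the same set then gives the same value. As in the proof of Theorem~\ref{teo_upp}, I would partition $D$ into the classes $D_{\sim}(E\setminus F;F)$, $F\subseteq E$. These classes are pairwise disjoint and cover $D$, since every feasible solution $S$ satisfies $S\cap E = F$ for exactly one $F$.

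Fix $\vv{\alpha}\ge 0$. The first observation is that the decrease changes the cost of any $S\in D_{\sim}(E\setminus F;F)$ by exactly $-\sum_{e_i\in F}\alpha_i$, which is the same amount for every solution in the class. Hence
\begin{align*}
f_{c_{-\vv{\alpha},E}}\big(D_{\sim}(E\setminus F;F)\big) = f_c\big(D_{\sim}(E\setminus F;F)\big) - \sum_{e_i\in F}\alpha_i ,
\end{align*}
with the convention $\infty - x = \infty$ for empty classes (Remark~\ref{LP_rem_upp}(b)). Consequently the optimal value of the modified problem is the minimum over $F\subseteq E$ of these quantities.

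Next I would show that $f_{c_{-\vv{\alpha},E}}(\prob_{c_{-\vv{\alpha},E}}) = c^\ast$ holds if and only if all constraints~\eqref{ELLc} hold. Costs only decrease, so the new optimum is always at most $c^\ast$: take an optimal $S^\ast$ of $\prob$, whose cost can only go down. Therefore the optimum equals $c^\ast$ exactly when every class minimum is still at least $c^\ast$, i.e. when $f_c(D_{\sim}(E\setminus F;F)) - \sum_{e_i\in F}\alpha_i \ge c^\ast$ for all $F$, which is precisely~\eqref{ELLc}.

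One point needs care here. If all constraints hold, the new optimum is at least $c^\ast$, and the class containing $S^\ast$ forces it to be at most $c^\ast$, so equality holds. Conversely, if some constraint fails, that class gives a value below $c^\ast$.

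Finally, the maximum in the definition of $\lowtol(E)$ must be attained or else be $\infty$. The feasible set is a polyhedron containing $\vec 0$, because $f_c(D_{\sim}(E\setminus F;F))\ge c^\ast$ for every $F$. By LP theory, the optimum is either unbounded or attained, and in both cases it matches the definition of $\lowtol(E)$ as a max/sup. The main obstacle is only bookkeeping: handling infinite right-hand sides and the case $F=\emptyset$, where the constraint reads $0\le f_c(D_-(E))-c^\ast$ and holds trivially.
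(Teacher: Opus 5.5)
Your proof is correct and follows essentially the same approach as the paper. Both rest on one fact. A nonnegative decrease vector keeps the optimal value at $c^\ast$ exactly when, for every $F \subseteq E$, the decreased cost of the best solution in $D_{\sim}(E\setminus F;F)$ (the solutions $S$ with $S\cap E=F$) is still at least $c^\ast$. That is the same as saying the vector satisfies every constraint~\eqref{ELLc}.

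The difference is only in organization. The paper treats the infinite case separately and then proves the two inequalities by contradiction, using this fact implicitly in each direction. You instead state it directly as an equality of feasible sets, so the infinite case and the attainment of the maximum follow uniformly from LP theory. Your version also makes explicit that decreases can never push the optimum above $c^\ast$, which the paper leaves tacit.
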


\begin{proof}
Let $\vv{\alpha^\ast} = (\alpha_{1}^\ast, \alpha_{2}^\ast, \ldots,
\alpha_{k}^\ast)$ be an optimal solution of the ELL, and let $ \alpha^\ast = \sum_{i=1}^k \alpha_i^\ast $. 

The following statements are equivalent:
\begin{itemize}
\item[$\bullet$] $\lowtol(E) = \infty$.
\item[$\bullet$] $\exists\; i \in [k] $ so that $ e_i$ does not
lie in any feasible solution.
\item[$\bullet$] $\exists\; i \in [k]$ such that $\forall\; F \subseteq E, e_i \in F$ we have $f_c(D_{\sim}(E \setminus F; F)) = \infty$. 
\item[$\bullet$] $\exists\; i \in [k] $ so that in each constraint~\eqref{ELLc},
where $ \alpha_i$ occurs, the right-hand side is infinity.
\item[$\bullet$] $\alpha^\ast = \infty$.
\end{itemize}

Note that the first and second statement are equivalent, as
it holds that if an element lies in a feasible
solution, its cost can always be decreased
with some amount such that
the objective values also decrease.

Thus, in the following we always consider the case that 
$0 \le  \lowtol(E), \alpha^\ast < \infty$.

Now we show that
$ \lowtol(E) =
\alpha^\ast $. We divide the proof into two parts:
\begin{itemize}
\item[$\bullet$] $\alpha^\ast \le \lowtol(E)$.

Assume the opposite, i.e.,
$ \alpha^\ast > \lowtol(E)  $. 
Then there is a feasible solution $S$ so
that for $ F := S \cap E $ it holds that
$ S \in D_{\sim}(E \setminus F; F) $ and 
\begin{align*}
f_c(D_{\sim}(E \setminus F; F))  
- \sum_{e_i \in F} \alpha^\ast_i 
\le f_c(S) 
- \sum_{e_i \in F} \alpha^\ast_i 
< c^\ast.
\end{align*}
Thus, for $\vv{\alpha^\ast} = (\alpha_{1}^\ast, \alpha_{2}^\ast, \ldots,
\alpha_{k}^\ast)$ at least one constraint of the ELL is not fulfilled, which is a contradiction 
to the choice of this vector.
It follows that $ \alpha^\ast \le \lowtol(E)  $. 

\item[$\bullet$] $ \lowtol(E) \le \alpha^\ast $. 

Assume the opposite, i.e.,
$  \lowtol(E)  >  \alpha^\ast  $. 
Let $\vv{\hat{\alpha}} = (\hat{\alpha_{1}}, \hat{\alpha_{2}}, \ldots,
\hat{\alpha_{k}})$, $ \sum_{i = 1}^{k} \hat{\alpha_i} =  \lowtol(E) $. 
Then at least one constraint of the ELL with $ F \subseteq E$ is not fulfilled for 
$\vv{\hat{\alpha}}$.
It holds that
$f_c(D_{\sim}(E \setminus F; F))  
- \sum_{e_i \in F} \hat{\alpha}_i  < c^\ast$.
Thus, there exists a feasible solution
$ S \in D_{\sim}(E \setminus F; F) $ 
such that $ f_c(S)  
- \sum_{e_i \in F} \hat{\alpha}_i < c^\ast $.
This is a contradiction to the definition
of $  \lowtol(E) $.  
It follows that $  \lowtol(E)  
\le  \alpha^\ast  $. \qedhere
\end{itemize}
\end{proof}

The following theorem improves the upper bound of Theorem~\ref{bounds_low}.

\begin{theorem}
\label{part_thm_low}
Let $\prob = (\elems, D, c,f_c) $ be a CSP, $E \subseteq \elems$ and
$E = \bigcup_{i=1}^m E_i$ an
arbitrary partition. Then it holds that $\displaystyle \lowtol(E) \leq \sum_{i=1}^m \lowtol(E_i)$.
\end{theorem}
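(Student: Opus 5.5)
The plan is to work directly with the ELL characterization from Theorem~\ref{teo_ell}. Let $\vv{\alpha^\ast} = (\alpha_1^\ast, \ldots, \alpha_k^\ast)$ be an optimal solution of ELL for $E$, so that $\lowtol(E) = \sum_{i=1}^k \alpha_i^\ast$. If some $\lowtol(E_j) = \infty$, the inequality is trivial. So we assume all $\lowtol(E_j)$ are finite, which also gives $\lowtol(E) < \infty$ by Theorem~\ref{bounds_low}. For each block $E_j$ we consider the restriction $\vv{\alpha^\ast_{E_j}}$ of $\vv{\alpha^\ast}$ to the coordinates of elements in $E_j$. The goal is to show that this restricted vector is feasible for the ELL of $E_j$. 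Then $\sum_{e_i \in E_j} \alpha_i^\ast \le \lowtol(E_j)$, and summing over $j$ (the $E_j$ partition $E$) gives $\lowtol(E) \le \sum_{j=1}^m \lowtol(E_j)$.

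For feasibility it is more transparent to argue via the definition of the lower tolerance than via the constraint lists. Decreasing the costs of all of $E$ by $\vv{\alpha^\ast}$ keeps the optimal value at $c^\ast$. Decreasing only the costs in $E_j$ by the corresponding coordinates gives a cost function that is pointwise at least the fully decreased one. Hence every feasible solution $S$ has cost at least its cost under the full decrease, which is at least $c^\ast$. On the other hand, the decreases are nonnegative, so the new optimal value is at most the original $c^\ast$. Therefore the optimal value stays exactly $c^\ast$, and $\vv{\alpha^\ast_{E_j}}$ is admissible in the definition of $\lowtol(E_j)$, giving $\sum_{e_i\in E_j}\alpha_i^\ast \le \lowtol(E_j)$.

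Alternatively, in ELL terms, each constraint for $E_j$ with $F' \subseteq E_j$ reads
\[
\sum_{e_i\in F'}\alpha_i \le f_c(D_{\sim}(E_j\setminus F';F')) - c^\ast .
\]
The right-hand side is a minimum over the sets $D_{\sim}(E\setminus F;F)$ with $F\cap E_j = F'$. For each such $F \supseteq F'$, the ELL constraint for $E$ and nonnegativity give $\sum_{e_i\in F'}\alpha_i^\ast \le \sum_{e_i\in F}\alpha_i^\ast \le f_c(D_{\sim}(E\setminus F;F)) - c^\ast$, and taking the minimum yields the constraint. The main point needing care is this decomposition $D_{\sim}(E_j\setminus F';F') = \bigcup_{F} D_{\sim}(E\setminus F;F)$, together with the convention $f_c(\emptyset)=\infty$. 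The definitional argument above avoids it, so I would present that one.
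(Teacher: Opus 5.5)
Your proof is correct and is essentially the paper's argument run directly rather than by contradiction. The paper assumes an admissible decrease vector for $E$ whose total exceeds $\sum_{i=1}^m \lowtol(E_i)$, picks a block $E_t$ whose share exceeds $\lowtol(E_t)$, and uses the same pointwise cost comparison $f_{c_{-\vv{\alpha},E}}(\prob_{c_{-\vv{\alpha},E}}) \le f_{c_{-\vv{\alpha_t},E_t}}(\prob_{c_{-\vv{\alpha_t},E_t}}) < c^\ast$ to reach a contradiction. Your detour through ELL and Theorem~\ref{bounds_low} to obtain a finite maximizer is harmless but unnecessary: the restriction argument applies to every admissible vector for $E$, so you can simply take the supremum of their sums.
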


\begin{proof}
Consider a set $E = \{e_1^1, e_1^2, \ldots, e_1^{k_1}, \ldots, e_m^1, e_m^2, \ldots, e_m^{k_m} \} \subseteq \elems$ and a partition $E = \bigcup_{i=1}^m E_i,$ $E_i = \{e_i^1, e_i^2, \ldots, e_i^{k_i}\}$ for $i \in [m]$. 

Trivially, the inequality is true if $\sum_{i=1}^m \lowtol(E_i) = \infty$, i.e., if $\exists\; i \in [m]$ such that $\lowtol(E_i) = \infty$. In the following let $\forall\; i \in [m]\; \lowtol(E_i) < \infty $.

Suppose that $\lowtol(E) > \sum_{i=1}^m \lowtol(E_i)$.
Then we can choose $\alpha \in \R$ with 
\begin{align*}
\sum_{i=1}^m \lowtol(E_i) < \alpha \leq \lowtol(E),\; \; \; \alpha = \sum_{i=1}^m \alpha_i,
\; \; \; \alpha_i = \sum_{j=1}^{k_i} \alpha_i^{j} \text{ for } i \in [m],
\end{align*}
where $\alpha_i^{j} \geq 0$ for $j \in [k_i]$ and $i \in [m]$, such that
$f_{c_{-\vv{\alpha},E}}(\prob_{c_{{-\vv{\alpha}}, E}}) = c^\ast$, where
$\vv{\alpha} = (\alpha_1^1, \alpha_1^2, \ldots, \alpha_1^{k_1}, \ldots,
\alpha_m^1, \alpha_m^2, \ldots, \alpha_m^{k_m})$ holds.

Since $\sum_{i=1}^m \lowtol(E_i) < \alpha$, there exists a $t \in [m]$ with $\alpha_t > \lowtol(E_t)$. Then by definition of $\lowtol(E_t)$, we have
$
f_{c_{-\vv{\alpha},E}}(\prob_{c_{{-\vv{\alpha}}, E}}) \leq f_{c_{-\vv{\alpha_t},E_t}}(\prob_{c_{{-\vv{\alpha_t}}, E_t}}) < c^\ast,
$
where $\vv{\alpha_t} = (\alpha_t^1, \alpha_t^2, \ldots, \alpha_t^{k_i})$ holds. 

We receive a contradiction to $f_{c_{-\vv{\alpha}, E}}(\prob_{c_{{-\vv{\alpha}}, E}}) = c^\ast$. Thus, it holds that $\lowtol(E) \leq \sum_{i=1}^m \lowtol(E_i)$. \qedhere
\end{proof}

We can also formulate another upper bound for the set lower tolerance.

\begin{theorem}
\label{mincost_thm_low}
Let $\prob = (\elems, D, c,f_c) $ be a CSP and $E \subseteq \elems$. 
Then it holds that $\lowtol(E) \leq f_c(D_{+}(E)) - c^\ast$.
\end{theorem}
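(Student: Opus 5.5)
The bound comes from the single constraint of ELL with $F = E$. By Theorem~\ref{teo_ell}, $\lowtol(E)$ equals the optimal objective value of ELL. Taking $F = E$ in the constraints~\eqref{ELLc}, we have $E \setminus F = \emptyset$, so $D_{\sim}(\emptyset; E) = D_{+}(E)$ and the constraint reads
\begin{align*}
\sum_{i=1}^k \alpha_i \leq f_c(D_{+}(E)) - c^\ast .
\end{align*}
Its left-hand side is exactly the objective function of ELL. Hence every feasible solution of ELL has objective value at most $f_c(D_{+}(E)) - c^\ast$, and so does the optimum.

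Two side cases need a word. If $D_{+}(E) = \emptyset$, the right-hand side is $\infty$ by convention and the inequality is trivial. If $f_c(D_{+}(E)) < \infty$, the bound is finite, so the optimum is finite. This is consistent with the equivalence chain in the proof of Theorem~\ref{teo_ell}: every element of $E$ then lies in a feasible solution, namely one in $D_{+}(E)$.

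The same bound can be checked directly from the definition, without the LP. Take any $\vv{\alpha} \geq 0$ with $f_{c_{-\vv{\alpha}, E}}(\prob_{c_{-\vv{\alpha}, E}}) = c^\ast$, and let $S$ be a cheapest solution in $D_{+}(E)$. Since $E \subseteq S$, its new cost is $f_c(S) - \sum_i \alpha_i$. This must be at least $c^\ast$, which gives $\sum_i \alpha_i \leq f_c(D_{+}(E)) - c^\ast$. Taking the maximum over all such $\vv{\alpha}$ yields the claim.

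No step is a real obstacle. The only care needed is the infinite case and noting that the chosen solution $S$ contains all of $E$, so it receives the full decrease $\sum_i \alpha_i$.
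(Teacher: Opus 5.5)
Your proof is correct. Your second (``direct check'') argument is essentially the paper's proof. The paper fixes a cheapest $\bar{S} \in D_{+}(E)$, assumes $\lowtol(E) > f_c(\bar{S}) - c^\ast$, and picks a valid $\vv{\alpha}$ with $\sum_i \alpha_i$ strictly between the two. It then observes that $\bar{S}$, which receives the full decrease, would cost less than $c^\ast$. Your version states the same bound directly for every valid $\vv{\alpha}$ and then takes the maximum, which is slightly cleaner.

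Your main argument takes a genuinely different route: you read the bound off the $F = E$ constraint of ELL and invoke Theorem~\ref{teo_ell}. This is not circular, since the proof of Theorem~\ref{teo_ell} does not use the present theorem. It also shows that the bound is literally one of the ELL constraints, which is why it reappears as constraint~\eqref{TLLc1} in TLL. The cost is that the statement then rests on the whole correctness proof of ELL, including its handling of the infinite cases. The direct argument uses nothing but the definition of $\lowtol(E)$. Since this theorem is itself an input to Lemma~\ref{lem_low}(b), and hence to the correctness of ILL and TLL, the paper's self-contained argument keeps it independent of the LP machinery.
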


\begin{proof}
Trivially, the inequality is true if $f_c(D_{+}(E)) = \infty$. In the following let $f_c(D_{+}(E)) < \infty$.

Consider a set $E = \{e_1, e_2, \ldots e_k\} \subseteq \elems$. Let $\bar{S} \in
D$ be a feasible solution with $E 
\subseteq \bar{S}$ of cost $f_c(D_{+}(E))$. We need to show that $\lowtol(E)
\leq f_c(\bar{S}) - c^\ast$.

Suppose that $\lowtol(E) > f_c(\bar{S}) - c^\ast$. Then we can choose $\alpha
\in \R$ with $f_c(\bar{S}) - c^\ast < \alpha \leq \lowtol(E), \alpha =
\sum_{i=1}^k \alpha_i$, where $\alpha_i \geq 0$ for $i \in [k]$, such that
$f_{c_{-\vv{\alpha}, E}}(\prob_{c_{{-\vv{\alpha}}, E}}) = c^\ast$, where
$\vv{\alpha} = (\alpha_1, \alpha_2, \ldots, \alpha_k)$. 

Because of $E \subseteq \bar{S}$ we have $f_{c_{-\vv{\alpha}, E}}(\bar{S}) =
f_c(\bar{S}) - \alpha$. Then it follows that $c^\ast > f_c(\bar{S}) - \alpha =
f_{c_{-\vv{\alpha}, E}}(\bar{S})$. That means that 
$c^\ast > f_{c_{-\vv{\alpha}, E}}(\prob_{c_{{-\vv{\alpha}}, E}})$, which is a
contradiction to
$f_{c_{-\vv{\alpha}, E}}(\prob_{c_{{-\vv{\alpha}}, E}}) = c^\ast$. 
Thus, $\lowtol(E) \leq f_c(\bar{S}) - c^\ast$ holds. \qedhere
\end{proof}

Consider a set $E = \{e_1, e_2, \ldots, e_k\} \subseteq \elems$. Let $c^\ast$
be the optimal objective value of $\prob$ 
and let $\alpha_{1}, \alpha_{2}, \ldots, \alpha_{k}$ be the decreases in the
costs of elements $e_1, e_2, \ldots, e_k$, respectively. We introduce
two following linear programs.

\emph{``Include Lower LP'' (ILL)}:
\begin{align}
\nonumber
\text{maximize} \quad \sum_{i = 1}^{k} \alpha_{i} \quad & & & \\
\text{subject to} \quad \sum_{e_i \in F} \alpha_i \quad 
& \leq \quad f_c(D_{+}(F)) - c^\ast 
& \quad 
& \forall\, F \subseteq E \label{ILLc}\\
\nonumber
\alpha_{i} \quad 
& \geq \quad 0 
& \quad 
& \forall\, i \in [k]
\end{align}

\emph{``Tolerance Lower LP'' (TLL)}:
\begin{align}
\nonumber
\text{maximize} \quad \sum_{i = 1}^{k} \alpha_{i} \quad 
& & & \\
\text{subject to} \quad \sum_{i = 1}^{k} \alpha_{i} \quad 
& \leq \quad f_c(D_{+}(E)) - c^\ast 
& & \label{TLLc1} \\
\sum_{e_i \in F} \alpha_i \quad 
& \leq \quad \lowtol(F) 
& \quad 
& \forall\, F \subsetneq E, F \not= \emptyset \label{TLLc2} \\
\nonumber
\alpha_{i} \quad 
& \geq \quad 0 
& \quad 
& \forall\, i \in [k]
\end{align}

\begin{remark}
Let $\prob = (\elems, D, c,f_c) $ be a CSP and $F \subseteq E \subseteq \elems$. The terms  $f_c(D_{+}(E))$,
$f_c(D_{+}(F)) $, $ \lowtol(F) $ can become infinite,
in which case the corresponding constraints become obsolete.
\end{remark}

We will need the following lemma to prove the correctness of the
two new LP based approaches for computing set lower tolerances.

\begin{lemma}
\label{lem_low}
Let $\prob = (\elems, D, c,f_c) $ be a CSP and $E \subseteq \elems$. 
The following holds:
\begin{itemize}
\item[\bf (a)] The optimal objective value of the ILL is a lower bound
for the optimal objective value of the ELL. 

\item[\bf (b)] The  optimal objective value of
the TLL is a lower bound for the 
optimal objective value of the ILL.

\item[\bf (c)] The  optimal objective value of
the TLL is a lower bound for the 
optimal objective value of the ELL.
\end{itemize}
\end{lemma}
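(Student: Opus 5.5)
All three LPs share the objective $\sum_{i=1}^k \alpha_i$ and the nonnegativity constraints $\alpha_i \ge 0$. The plan is therefore to prove each part by feasible-set inclusion: every feasible vector of the "smaller" LP is feasible for the "larger" one, so the smaller optimum cannot exceed the larger. This covers the infinite cases too: a constraint with infinite right-hand side is obsolete, and inclusion of feasible regions still gives the comparison of suprema.

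For \textbf{(a)}, take $\vv{\alpha}$ feasible for ILL and fix $F \subseteq E$. Since $D_{\sim}(E\setminus F; F) \subseteq D_{+}(F)$, we have $f_c(D_{+}(F)) \le f_c(D_{\sim}(E\setminus F;F))$, with the convention that the minimum over the empty set is $\infty$. Hence
\begin{align*}
\sum_{e_i\in F}\alpha_i \le f_c(D_{+}(F)) - c^\ast \le f_c(D_{\sim}(E\setminus F;F)) - c^\ast ,
\end{align*}
so constraint~\eqref{ELLc} holds for every $F$. Thus ILL-feasible implies ELL-feasible.

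For \textbf{(b)}, take $\vv{\alpha}$ feasible for TLL and fix $F \subseteq E$. If $F=\emptyset$, constraint~\eqref{ILLc} reads $0 \le f_c(D)-c^\ast = 0$ and holds trivially. If $F=E$, it is exactly \eqref{TLLc1}. If $\emptyset \ne F \subsetneq E$, then \eqref{TLLc2} and Theorem~\ref{mincost_thm_low} applied to $F$ give
\begin{align*}
\sum_{e_i\in F}\alpha_i \le \lowtol(F) \le f_c(D_{+}(F)) - c^\ast .
\end{align*}
So every TLL-feasible vector is ILL-feasible.

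Part \textbf{(c)} follows by chaining (b) and (a). Alternatively, it follows directly from Theorem~\ref{teo_ell} together with Theorem~\ref{part_thm_low}-style reasoning, but the chain is enough.

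The only potential obstacle is bookkeeping of the boundary cases. These are $F=\emptyset$ in ILL and ELL, where both right-hand sides equal $0$ because $f_c(D_{\sim}(E;\emptyset))\ge c^\ast$ and $f_c(D_{+}(\emptyset))=c^\ast$, and the infinite right-hand sides. I will address both explicitly, noting that nonnegativity of the right-hand sides guarantees that $\vv{0}$ is feasible for all three LPs, so all optima are well defined in $[0,\infty]$.
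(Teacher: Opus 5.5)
Your proposal is correct and follows the paper's proof essentially verbatim: feasible-region inclusion via $D_{\sim}(E\setminus F;F)\subseteq D_{+}(F)$ for (a), Theorem~\ref{mincost_thm_low} plus the trivial $F=\emptyset$ constraint for (b), and chaining for (c). One harmless slip: for $F=\emptyset$ the ELL right-hand side is $f_c(D_{-}(E))-c^\ast$, which is only guaranteed to be nonnegative (it may be positive or infinite) rather than equal to $0$, but nonnegativity is all your argument uses.
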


\begin{proof}
\phantom{}
\begin{itemize}
\item[\bf (a)] Elements from $E \setminus F$ must be excluded in solutions from
$D_{\sim}(E \setminus F; F)$, so $f_c(D_{+}(F)) \leq f_c(D_{\sim}(E \setminus F; F))$ and $f_c(D_{+}(F)) - c^\ast \leq f_c(D_{\sim}(E \setminus F; F))
- c^\ast$. 
Thus, the optimal objective value of the ILL gives a lower bound for the optimal objective value of the ELL.

\item[\bf (b)] By Theorem~\ref{mincost_thm_low} $\lowtol(F) \leq f_c(D_{+}(F)) -
c^\ast$. Furthermore, for $ F = \emptyset$
in the ILL, 
we get the redundant constraint $ 0 \le 0 $.
Thus, the optimal objective value of the TLL gives a lower bound for
the optimal objective value of the ILL.

\item[\bf (c)] This follows directly from \textbf{(a)} and \textbf{(b)}. \qedhere
\end{itemize}
\end{proof}

\begin{theorem}\label{teo_low}
Let $\prob = (\elems, D, c,f_c) $ be a CSP and $E = \{e_1, e_2,  \ldots, e_k\} \subseteq \elems$. 
The optimal objective values of the ILL 
and the TLL 
are equal to the set lower tolerance $\lowtol(E)$. 
\end{theorem}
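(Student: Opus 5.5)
By Lemma~\ref{lem_low} and Theorem~\ref{teo_ell}, we already know
\begin{align*}
\text{OPT(TLL)} \le \text{OPT(ILL)} \le \text{OPT(ELL)} = \lowtol(E).
\end{align*}
So it suffices to prove the reverse inequality $\lowtol(E) \le \text{OPT(TLL)}$, and the plan is to show this directly. Both claimed equalities then follow at once.

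First I will treat the infinite case. If $\lowtol(E) = \infty$, the proof of Theorem~\ref{teo_ell} shows that some $e_i$ lies in no feasible solution. Then every $F$ containing $e_i$ has $f_c(D_{+}(F)) = \infty$, and also $\lowtol(F) = \infty$ by Theorem~\ref{bounds_low}, since $\lowtol(F) \ge \lowtol(e_i) = f_c(D_{+}(e_i)) - c^\ast = \infty$. Hence every constraint of the TLL involving $\alpha_i$ is obsolete, so $\alpha_i$ is unbounded and $\text{OPT(TLL)} = \infty$. From now on I assume everything is finite.

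For the main step, take an optimal vector $\vv{\alpha^\ast}$ of the ELL. It is nonnegative, has sum $\lowtol(E)$, and satisfies $f_{c_{-\vv{\alpha^\ast},E}}(\prob_{c_{-\vv{\alpha^\ast},E}}) = c^\ast$. I will check that $\vv{\alpha^\ast}$ is feasible for the TLL, which gives the reverse inequality.

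Constraint~\eqref{TLLc1} is exactly Theorem~\ref{mincost_thm_low}, because $\sum_i \alpha_i^\ast = \lowtol(E) \le f_c(D_{+}(E)) - c^\ast$.

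For constraint~\eqref{TLLc2}, fix $\emptyset \ne F \subsetneq E$ and restrict $\vv{\alpha^\ast}$ to the coordinates in $F$, calling this $\vv{\alpha^\ast_F}$. Decreasing costs can only lower the optimum, so
\begin{align*}
c^\ast = f_{c_{-\vv{\alpha^\ast},E}}(\prob_{c_{-\vv{\alpha^\ast},E}}) \le f_{c_{-\vv{\alpha^\ast_F},F}}(\prob_{c_{-\vv{\alpha^\ast_F},F}}) \le c^\ast.
\end{align*}
The middle term is therefore equal to $c^\ast$. This means $\vv{\alpha^\ast_F}$ is admissible in the definition of $\lowtol(F)$, so $\sum_{e_i \in F} \alpha^\ast_i \le \lowtol(F)$.

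The step needing most care is this monotonicity comparison. The optimal value as a function of nonnegative decreases is nonincreasing in each coordinate, and it never exceeds $c^\ast$ (take any optimal $S^\ast$, whose cost can only go down). I expect no real obstacle beyond this bookkeeping and the infinite-value conventions.
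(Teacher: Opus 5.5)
Your proof is correct and follows essentially the same route as the paper. Both sandwich the values via Lemma~\ref{lem_low}, then show that the ELL-optimal vector satisfies every TLL constraint, using the fact that the restricted decrease $\vv{\alpha^\ast_F}$ can only lower the optimum. The paper phrases this as a contradiction with a case split on whether \eqref{TLLc1} is tight at the TLL optimum; your direct feasibility check, with \eqref{TLLc1} taken from Theorem~\ref{mincost_thm_low}, is a slightly cleaner version of the same argument.
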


\begin{proof}
By Lemma~\ref{lem_low}, it is sufficient to show that TLL and ELL have the same
optimal objective value.

Let $\vv{\alpha^\ast} = (\alpha_{1}^\ast, \alpha_{2}^\ast, \ldots,
\alpha_{k}^\ast)$ be an optimal solution of the ELL, and let $ \alpha^\ast = \sum_{i=1}^k \alpha_i^\ast$. Also let $\vv{\alpha'} = (\alpha_{1}', \alpha_{2}', \ldots,
\alpha_{k}')$ be an optimal solution of the TLL, and let $ \alpha' = \sum_{i=1}^k \alpha_i'$.
The following statements are equivalent:
\begin{itemize}
\item[$\bullet$]  $\alpha^\ast = \lowtol(E) = \infty$.
\item[$\bullet$] $ \exists\; i \in [k] $ so that $ e_i$ does not
lie in any feasible solution.
\item[$\bullet$] $\exists\; i \in [k] $ with $\lowtol(e_i) = \infty$.
\item[$\bullet$] $f_c(D_{+}(E)) = \infty$ and $\exists\; i \in [k]$ such that $\forall\; F \subsetneq E, F \not = \emptyset, e_i \in F$ we have $ \lowtol(F) = \infty$. 
\item[$\bullet$] $\exists\; i \in [k] $ with $ \alpha'_i = \infty$.
\item[$\bullet$]  $\alpha' = \infty$.
\end{itemize}

Thus, in the following we always consider the case that 
$0 \le \alpha^\ast, \alpha' < \infty$.

By Lemma~\ref{lem_low}(c), $\alpha' \leq \alpha^\ast$
holds. So, to show that $\alpha' = \alpha^\ast$, it is sufficient to show that $\alpha^\ast \leq \alpha'$.
We show this by considering two cases.
\begin{itemize}
\item[$\bullet$] 
The equality $\sum_{i = 1}^{k} \alpha_{i}' = f_c(D_{+}(E)) - c^\ast$ holds. 

Since $f_c(D_{+}(E)) = f_c(D_{\sim}(\emptyset; E))$, we have
\begin{align*}
\alpha^\ast = \sum_{i = 1}^{k} \alpha_{i}^\ast \leq f_c(D_{\sim}(\emptyset; E)) - c^\ast = f_c(D_{+}(E)) - c^\ast = \sum_{i = 1}^{k} \alpha_{i}' = \alpha'.
\end{align*}

So, $\alpha^\ast \leq \alpha'$ follows. 

\item[$\bullet$]  
The inequality $\sum_{i = 1}^{k} \alpha_{i}' < f_c(D_{+}(E)) - c^\ast$ holds. 

Assume that
$\alpha^\ast > \alpha'$. 
Then at least one constraint of the TLL is not fulfilled for 
$\vv{{\alpha^\ast}}$ with $ F \subsetneq E, F \not= \emptyset$.
Let $F = \{e_{i_1}, e_{i_2}, \ldots, e_{i_s}\}$ for $s \in [k-1]$. It holds that
$\sum_{e_{i_j} \in F} \alpha_{i_j}^\ast > \lowtol(F) $.
Let $\vv{\alpha^\ast_F} = (\alpha^\ast_{i_1}, \alpha^\ast_{i_2}, \ldots, \alpha^\ast_{i_2})$. 
Then it holds that
$f_{c_{-\vv{\alpha^\ast},E}}(\prob_{c_{{-\vv{\alpha^\ast}}, E}}) \leq f_{c_{-\vv{\alpha^\ast_F},F}}(\prob_{c_{-\vv{\alpha^\ast_F},F}}) < c^\ast$,
which is a contradiction to $\alpha^\ast = \lowtol(E)$. It follows that $\alpha^\ast \leq \alpha'$. \qedhere
\end{itemize} 
\end{proof}

The following remark illustrates the benefits of using ILL/TLL, rather than ELL, to compute lower tolerances for all subsets of the ground set.

\begin{remark}\label{rec_remark}
The ILL and TLL formulations allow the construction of a recursive algorithm
for computing the lower tolerances of all $E \subseteq \elems$ as follows.
\begin{itemize}
\item[$\bullet$] Compute tolerances for sets of increasing cardinality, starting from
$|E|\!=\!1$.
\item[$\bullet$] For each $E \subseteq \elems$, evaluate $f_c(D_{+}(E)) - c^\ast$ and solve
the corresponding ILL or TLL formulation.\\
This is sufficient since all values required for smaller subsets have already
been computed in previous iterations.
\end{itemize}
\end{remark} 

Let $m = |\elems|$. For any subset $E \subseteq \elems$ with $|E| = k$, the computation of set tolerances requires solving one LP with $2^k$ constraints. In the case of the ELL, constructing these constraints additionally requires solving $2^k$ CSPs for each set~$E$. In contrast, the recursive ILL/TLL approaches require solving only one new CSP per subset, since the remaining values are reused from previous iterations. Consequently, computing tolerances for all subsets requires solving $2^m$ LPs under any approach, however, ELL requires a total of
$\sum_{i=0}^{m} \binom{m}{i} 2^i = 3^m$
CSPs to generate all constraints, whereas the recursive ILL/TLL approaches require only $2^m$ CSPs in total.

\medskip
In the following we derive another upper bound for the set lower tolerance.

\begin{theorem}
\label{all_sets_thm_low}
Let $\prob = (\elems, D, c,f_c) $ be a CSP, $E = \{e_1, e_2,  \ldots, e_k\} \subseteq \elems$ and $s \in [k]$. 
Then it holds that
\begin{align*}
\lowtol(E) \leq \frac{1}{\binom{k - 1}{s - 1}} \sum_{F \subseteq E, |F| = s} \lowtol(F).    
\end{align*}
\end{theorem}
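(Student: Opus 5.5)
We prove the bound by a double counting argument combined with the definition of $\lowtol$, in the same spirit as the proof of Theorem~\ref{teo_low}. If some $\lowtol(F)$ with $|F|=s$ is infinite, the right-hand side is infinite and there is nothing to show. So we assume all $\lowtol(F)$ with $|F| = s$ are finite. In particular every $e_i$ then lies in some feasible solution. We may also assume $\lowtol(E) < \infty$; otherwise some $e_i$ lies in no feasible solution, and every $F \ni e_i$ would have infinite tolerance.

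Let $\vv{\alpha^\ast} = (\alpha_1^\ast,\ldots,\alpha_k^\ast)$ be an optimal solution of ELL. By Theorem~\ref{teo_ell}, $\sum_{i=1}^k \alpha_i^\ast = \lowtol(E)$ and $f_{c_{-\vv{\alpha^\ast},E}}(\prob_{c_{-\vv{\alpha^\ast},E}}) = c^\ast$.

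\textbf{Key step.} For every $F \subseteq E$, the restricted vector $\vv{\alpha^\ast_F}$ is admissible in the definition of $\lowtol(F)$, so
\[
\sum_{e_i \in F} \alpha_i^\ast \leq \lowtol(F).
\]
To see this, note that decreasing only the elements of $F$ by $\vv{\alpha^\ast_F}$ yields costs that are pointwise at least the costs after decreasing all of $E$ by $\vv{\alpha^\ast}$, since all $\alpha_i^\ast \ge 0$. Hence
\[
c^\ast \ge f_{c_{-\vv{\alpha^\ast_F},F}}(\prob_{c_{-\vv{\alpha^\ast_F},F}}) \ge f_{c_{-\vv{\alpha^\ast},E}}(\prob_{c_{-\vv{\alpha^\ast},E}}) = c^\ast .
\]
The first inequality holds because costs only decrease. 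So the objective value stays $c^\ast$, and $\sum_{e_i\in F}\alpha_i^\ast$ is feasible in the max defining $\lowtol(F)$. Alternatively, one can read this off from ELL constraints restricted to $F$ via Theorem~\ref{teo_ell}. This is the same monotonicity used in the proof of Theorem~\ref{part_thm_low}.

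\textbf{Summation.} Sum the inequality over all $F\subseteq E$ with $|F|=s$. Each index $i$ belongs to exactly $\binom{k-1}{s-1}$ such sets, so the left-hand side equals
\[
\binom{k-1}{s-1}\sum_{i=1}^k \alpha_i^\ast = \binom{k-1}{s-1}\,\lowtol(E).
\]
Dividing by the positive number $\binom{k-1}{s-1}$ gives the claim.

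The only delicate point is justifying the sandwich above, that is, that shrinking the perturbation to a subset keeps the optimal value at exactly $c^\ast$. This needs nonnegativity of the $\alpha_i^\ast$ and the convention $f_c(\emptyset)=\infty$. The rest is the routine counting identity. The case $s=k$ gives equality trivially, and $s=1$ recovers the upper bound of Theorem~\ref{bounds_low}, which serves as a sanity check.
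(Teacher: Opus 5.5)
Your proof is correct and is essentially the paper's argument: the paper takes an optimal TLL solution, for which $\sum_{e_i\in F}\alpha_i'\le \lowtol(F)$ holds directly as a constraint, then does the same double counting with $\binom{k-1}{s-1}$ and uses $\lowtol(E)=\sum_{i}\alpha_i'$ from Theorem~\ref{teo_low}. Your only variation is that you derive these subset inequalities yourself, via the monotonicity sandwich applied to a maximizing vector (the same reasoning that underlies Theorem~\ref{teo_low}); as a small bonus, this also covers $F=E$ explicitly when $s=k$.
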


\begin{proof}
Trivially, the inequality is true if $\frac{1}{\binom{k - 1}{s - 1}} \sum_{F \subseteq E, |F| = s} \lowtol(F) = \infty$, i.e., if $\exists\; F \subseteq E, |F| = s$ such that $\lowtol(F) = \infty$. In the following let $\forall\; F \subseteq E, |F| = s:$ $\lowtol(F) < \infty $.

Let $\vv{\alpha'} = (\alpha_{1}', \alpha_{2}', \ldots,
\alpha_{k}')$ be an optimal solution of the TLL, and let $ \alpha' = \sum_{i=1}^k \alpha_i'$. 
Then for any $F \subseteq E$ with $|F| = s$ we have 
$\sum_{e_i \in F} \alpha_i' \leq \lowtol(F)$.

If we add all such inequalities for all subsets $F \subseteq E$ with $|F| = s$, we get
\begin{align*}
\sum_{F \subseteq E, |F| = s} \sum_{e_i \in F} \alpha_i' \leq \sum_{F \subseteq E, |F| = s}\lowtol(F).
\end{align*}

Note that each $\alpha_i'$ appears in the left-hand side of this inequality exactly $\binom{k - 1}{s - 1}$ times. Therefore, it holds that 
$\binom{k - 1}{s - 1} \sum_{i = 1}^{k} \alpha_{i}' \leq \sum_{F \subseteq E, |F| = s}\lowtol(F)$,
or, equivalently, 
$\sum_{i = 1}^{k} \alpha_{i}' \leq \frac{1}{\binom{k - 1}{s - 1}} \sum_{F
\subseteq E, |F| = s}\lowtol(F)$.
Furthermore, since $\lowtol(E) = \sum_{i = 1}^{k}
\alpha_{i}'$, 
it holds that
$\lowtol(E) \leq \frac{1}{\binom{k - 1}{s - 1}} \sum_{F \subseteq E, |F| = s}
\lowtol(F)$. \qedhere
\end{proof}

\subsection{Formulas for Cardinality 2 and 3}\label{sec4_2}

\begin{theorem}
\label{two_low}
Let $\prob = (\elems, D, c,f_c) $ be a CSP and
$E = \{e_1, e_2\} \subseteq \elems$. Then it holds that
\begin{align*}
\lowtol(E) & = 
\min \left\{
f_c(D_{+}(e_1)) - c^\ast + f_c(D_{+}(e_2)) - c^\ast, 
f_c(D_{+}(E)) - c^\ast
\right\} \\
& = \min \big\{\lowtol(e_1) + \lowtol(e_2), f_c(D_{+}(E))-c^\ast \big\}. 
\end{align*}
\end{theorem}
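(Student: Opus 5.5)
We would prove the statement by applying Theorem~\ref{teo_low}, which says that the optimal objective value of the ILL equals $\lowtol(E)$. For $E=\{e_1,e_2\}$ the ILL has one constraint for each $F\subseteq E$. The constraint for $F=\emptyset$ is the trivial one $0\le 0$ (as noted in the proof of Lemma~\ref{lem_low}(b)). The remaining constraints give the LP
\begin{align*}
\text{maximize} \quad & \alpha_1+\alpha_2\\
\text{subject to} \quad & \alpha_1 \le f_c(D_{+}(e_1))-c^\ast,\\
& \alpha_2 \le f_c(D_{+}(e_2))-c^\ast,\\
& \alpha_1+\alpha_2 \le f_c(D_{+}(E))-c^\ast,\\
& \alpha_1,\alpha_2\ge 0.
\end{align*}
All right-hand sides are nonnegative because $c^\ast$ is the optimum. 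Infinite right-hand sides simply drop out, by the remark on obsolete constraints.

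Next we solve this LP explicitly. Write $a=f_c(D_{+}(e_1))-c^\ast$, $b=f_c(D_{+}(e_2))-c^\ast$ and $d=f_c(D_{+}(E))-c^\ast$. The objective value is clearly at most $\min\{a+b,d\}$. For attainment we take $\alpha_1=\min\{a,d\}$ and $\alpha_2=\min\{b,\,d-\alpha_1\}$, which is nonnegative. If $\alpha_2=b$, the objective equals $\alpha_1+b$, and $\alpha_1+b\ge\min\{a+b,d\}$ holds in either case of the first minimum: if $\alpha_1=a$ the value is $a+b$, and if $\alpha_1=d<a$ the value is $d+b\ge d$. If instead $\alpha_2=d-\alpha_1$, the objective equals $d$. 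So the optimum is exactly $\min\{a+b,d\}$. The infinite cases also fit this formula. If, say, $a=\infty$, then $\min\{a+b,d\}=d$, and the value $d$ is attained by putting all weight on $\alpha_1$ (if $d=\infty$ as well, the LP is unbounded). This gives the first equality.

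The second equality follows directly from Theorem~\ref{single_low}, which states $\lowtol(e_i)=f_c(D_{+}(e_i))-c^\ast$. The only slightly delicate point is the bookkeeping with infinite values. We will handle it by observing that $\min\{a+b,d\}$ with the convention $\infty+x=\infty$ matches the unbounded and partially unbounded cases of the LP.
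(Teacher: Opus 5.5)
Your proposal is correct and follows the same route as the paper: write out the ILL for $E=\{e_1,e_2\}$ (with the trivial $F=\emptyset$ constraint dropped), read off the optimum $\min\{a+b,d\}$, and convert via Theorem~\ref{single_low}. Your explicit feasible point $\alpha_1=\min\{a,d\}$, $\alpha_2=\min\{b,d-\alpha_1\}$ and your handling of infinite right-hand sides add detail that the paper leaves implicit.
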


\begin{proof}
ILL looks like:
\begin{align*}
\nonumber \text{maximize} \quad \alpha_1 + \alpha_2 \quad & \\
\text{subject to} \quad \alpha_1 + \alpha_2 \quad &\leq\quad f_c(D_{+}(E)) - c^\ast \\
\alpha_1 \phantp \phantom{\alpha_2} \quad &\leq\quad f_c(D_{+}(e_1))-c^\ast \\
\alpha_2 \quad &\leq\quad f_c(D_{+}(e_2))-c^\ast \\
\nonumber \alpha_1, \phantom{+} \alpha_2 \quad &\geq\quad 0
\end{align*}

The optimal objective value of this LP is
\begin{align*}
\min \big\{f_c(D_{+}(e_1)) - c^\ast + f_c(D_{+}(e_2)) - c^\ast, f_c(D_{+}(E)) - c^\ast \big\}
\end{align*} 
which, by Theorem~\ref{single_low}, is equal to 
$\min \big\{\lowtol(e_1) + \lowtol(e_2), f_c(D_{+}(E))-c^\ast \big\}$. \qedhere
\end{proof}

\begin{theorem}
\label{three_low}
Let $\prob = (\elems, D, c,f_c) $ be a CSP and $E = \{e_1, e_2, e_3\} \subseteq \elems$. Then it holds that
\begin{align*}
\lowtol(E) = \min \left\{
\begin{aligned}
    & f_c(D_{+}(E)) - c^\ast, \lowtol(e_1) + \lowtol(e_2, e_3), \\
    & \lowtol(e_2) + \lowtol(e_1, e_3), \lowtol(e_3) + \lowtol(e_1, e_2), \\
    & \tfrac{1}{2} \cdot (\lowtol(e_1, e_2) + \lowtol(e_1, e_3) + \lowtol(e_2, e_3))
\end{aligned}
\right\}.  
\end{align*}
\end{theorem}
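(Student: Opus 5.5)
By Theorem~\ref{teo_low}, $\lowtol(E)$ equals the optimal value of TLL for $k=3$. That LP reads: maximize $\alpha_1+\alpha_2+\alpha_3$ subject to
\begin{align*}
\alpha_1+\alpha_2+\alpha_3 &\le f_c(D_{+}(E))-c^\ast, \\
\alpha_i+\alpha_j &\le \lowtol(e_i,e_j) \quad (i<j), \\
\alpha_i &\le \lowtol(e_i),
\end{align*}
together with $\alpha\ge 0$. This is the same LP as LP2 in the proof of Theorem~\ref{three_upp}(a), with $f_c(D_{-}(E))$ replaced by $f_c(D_{+}(E))$ and upper tolerances replaced by lower ones. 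So I will reproduce that argument, replacing each appeal to an upper-tolerance result by the corresponding lower-tolerance result. Let $\alpha'$ be the optimal value and $V$ the claimed minimum.

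\textbf{Infinite case.} If $\alpha'=\infty$, then some $\alpha_i'$ is unbounded. Hence every constraint containing $\alpha_i$ has infinite right-hand side, so $f_c(D_{+}(E))=\infty$ and $\lowtol(e_i)=\lowtol(e_i,e_j)=\infty$ for both $j\neq i$. Then the terms $f_c(D_{+}(E))-c^\ast$, $\lowtol(e_i)+\lowtol(e_j,e_l)$ and the half-sum in $V$ are all infinite. The terms $\lowtol(e_j)+\lowtol(e_i,e_l)$ are infinite too, since they contain $\lowtol(e_i,e_l)$. Hence $V=\infty$.

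Conversely, suppose $V=\infty$. If all $\lowtol(e_i)$ were finite, Theorem~\ref{two_low} (or Theorem~\ref{bounds_low}) would give $\lowtol(e_1,e_2)\le\lowtol(e_1)+\lowtol(e_2)<\infty$, so $V\le \lowtol(e_3)+\lowtol(e_1,e_2)<\infty$, a contradiction. Hence some $\lowtol(e_i)=\infty$, and Theorem~\ref{bounds_low} gives $\lowtol(E)\ge\lowtol(e_i)=\infty$. From now on both $V$ and $\alpha'$ are finite.

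\textbf{$\alpha'\le V$.} Each expression in $V$ is obtained by summing constraints. The first is constraint \eqref{TLLc1} itself. Each $\lowtol(e_i)+\lowtol(e_j,e_l)$ is the sum of a singleton constraint and the complementary pair constraint. The half-sum comes from adding the three pair constraints and halving.

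\textbf{$V\le\alpha'$.} Take an optimal vertex $\vv{\alpha'}$. Each $\alpha_i'$ must occur in some tight constraint, since otherwise it could be increased. I will go through the tight patterns:
\begin{itemize}
\item[(1)] \eqref{TLLc1} tight gives $\alpha'=f_c(D_{+}(E))-c^\ast$.
\item[(2)] A pair constraint and the complementary singleton are both tight, which gives $\alpha'=\lowtol(e_i)+\lowtol(e_j,e_l)$.
\item[(3)] All three pair constraints are tight, which gives the half-sum.
\item[(4)] All three singletons are tight. Then $\lowtol(e_1)+\lowtol(e_2)\le\lowtol(e_1,e_2)\le\lowtol(e_1)+\lowtol(e_2)$, by feasibility and Theorem~\ref{two_low}, so this reduces to pattern (2).
\item[(5)] Exactly two pair constraints are tight, say $\{1,2\}$ and $\{1,3\}$, and none of the above occurs.
\end{itemize}

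The main obstacle is ruling out pattern (5). I would reuse the perturbation $(\alpha_1'-\varepsilon,\alpha_2'+\varepsilon,\alpha_3'+\varepsilon)$. It keeps both tight pair sums unchanged and stays feasible for small $\varepsilon$, because the $\{2,3\}$ constraint, \eqref{TLLc1}, and the singleton constraints for $e_2,e_3$ are slack. It strictly increases the objective, which contradicts optimality. For this I must check that $\alpha_1'>0$. If $\alpha_1'=0$, then $\alpha_2'=\lowtol(e_1,e_2)\ge\lowtol(e_2)$ by Theorem~\ref{bounds_low}, so the singleton constraint for $e_2$ would be tight, contradicting slackness. With pattern (5) excluded, every optimum attains one of the terms of $V$, and therefore $V\le\alpha'$.
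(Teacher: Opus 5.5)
Your proposal is correct and follows the paper's proof essentially step for step. It uses the same reduction to TLL via Theorem~\ref{teo_low} and the same treatment of the infinite cases. It derives $\alpha' \le V$ directly from the constraints and proves $V \le \alpha'$ by the same tight-constraint case analysis, excluding the two-tight-pairs case with the same perturbation $(\alpha_1'-\varepsilon,\alpha_2'+\varepsilon,\alpha_3'+\varepsilon)$ and the same check that $\alpha_1'>0$.
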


\begin{proof}
Let 
\begin{align*}
V_{1,2,3} = \min \left\{
\begin{aligned}
    & f_c(D_{+}(E)) - c^\ast, \lowtol(e_1) + \lowtol(e_2, e_3), \\
    & \lowtol(e_2) + \lowtol(e_1, e_3), \lowtol(e_3) + \lowtol(e_1, e_2), \\
    & \tfrac{1}{2} \cdot (\lowtol(e_1, e_2) + \lowtol(e_1, e_3) + \lowtol(e_2, e_3))
\end{aligned}
\right\}.  
\end{align*}

TLL looks like:
\begin{align}
\nonumber
\text{maximize} \quad \alpha_1 + \alpha_2 + \alpha_3 \quad & \\
\text{subject to} \quad \alpha_1 + \alpha_2 + \alpha_3 \quad &\leq\quad f_c(D_{+}(E)) - c^\ast \label{L1} \\
\alpha_1 + \alpha_2 \phantp \phantom{\alpha_3} \quad &\leq\quad \lowtol(e_1, e_2) \label{L2} \\
\alpha_1 \phantp \phantom{\alpha_2} + \alpha_3 \quad &\leq\quad \lowtol(e_1, e_3)  \label{L3} \\
\alpha_2 + \alpha_3 \quad &\leq\quad \lowtol(e_2, e_3)  \label{L4} \\
\alpha_1 \phantp \phantom{\alpha_2} \phantp \phantom{\alpha_3} \quad &\leq\quad \lowtol(e_1)  \label{L5} \\
\alpha_2 \phantp \phantom{\alpha_3} \quad &\leq\quad \lowtol(e_2) \label{L6} \\
\alpha_3 \quad &\leq\quad \lowtol(e_3)  \label{L7} \\
\alpha_1, \phantom{+} \alpha_2, \phantom{+} \alpha_3 \quad &\geq\quad 0 \nonumber
\end{align}

Let $\vv{\alpha'} = (\alpha_{1}', \alpha_{2}', \alpha_{3}')$ be an optimal
solution of this LP, and let $\alpha' = \alpha_{1}' + \alpha_{2}' +
\alpha_{3}'$. By Theorem~\ref{teo_low}, $\alpha' = \lowtol(E)$ holds. 

We show that $V_{1,2,3} = \alpha'$.

First, let $ \alpha' = \infty$.
Then there exists an $ i \in [3] $ with 
$ \alpha_i' = \infty$. W.l.o.g., 
let $ \alpha_1' = \infty$. Then because of
\eqref{L1}, \eqref{L2}, \eqref{L3}, \eqref{L5} it holds that
\begin{align*}
f_c(D_{+}(E)) - c^\ast = 
\lowtol(e_1, e_2) = 
\lowtol(e_1, e_3) = \lowtol(e_1) = \infty.  
\end{align*}
It follows that $V_{1,2,3} = \infty $.

Second, let $V_{1,2,3} = \infty $. 
If there exists $i \in [3] $ such that 
$\lowtol(e_i) = \infty$,
then by Theorem~\ref{bounds_low} 
$ \alpha' = \lowtol(E) 
\geq \lowtol(e_i) = \infty$. 
If such $i \in [3] $ does not exist, then 
$ \lowtol(e_1) $, $ \lowtol(e_2) $,
$ \lowtol(e_3) $ are finite, 
but by Theorem~\ref{bounds_low}, $
V_{1,2,3} \leq \lowtol(e_1, e_2) + \lowtol(e_3) \leq \lowtol(e_1) + \lowtol(e_2) + \lowtol(e_3) < \infty$, leading to a contradiction.
Thus, this case cannot occur.

So, in the following we can assume that both $V_{1,2,3} \not= \infty $ and
$\alpha' \not= \infty $ hold. 

From constraints \eqref{L1}--\eqref{L7} we have $\alpha' \leq V_{1,2,3}$. Next we show that $V_{1,2,3} \leq \alpha'$.

Observe that for each $\alpha_i', i \in [3]$ there should be at
least one constraint among \eqref{L1}--\eqref{L7}, 
where $ \alpha_i$ occurs 
and which is satisfied with
equality. 

\begin{itemize}
\item[\bf (1)] Constraint \eqref{L1} is satisfied with equality. 

Then $\alpha' = f_c(D_{+}(E))-c^\ast$ holds.

\item[\bf (2)] Constraints \eqref{L2} and \eqref{L7} are satisfied with
equality. 

Then $\alpha' = \lowtol(e_1, e_2) +
\lowtol(e_3)$ holds.

\item[\bf (3)] Constraints \eqref{L3} and \eqref{L6} are satisfied with
equality. 

Then $\alpha' = \lowtol(e_1, e_3) +
\lowtol(e_2)$ holds.

\item[\bf (4)] Constraints \eqref{L4} and \eqref{L5} are satisfied with
equality. 

Then $\alpha' = \lowtol(e_2, e_3) +
\lowtol(e_1)$ holds.

\item[\bf (5)] Constraints \eqref{L2}, \eqref{L3} and \eqref{L4} are satisfied with equality. 

Then $\alpha' = \frac{1}{2}(\lowtol(e_1, e_2) + \lowtol(e_1, e_3) + \lowtol(e_2, e_3))$ holds.

\item[\bf (6)] Exactly two of three constraints \eqref{L2}, \eqref{L3} and
\eqref{L4} are satisfied with equality and none of the previous cases 
occurs. 

W.l.o.g., assume that constraints \eqref{L2} and
\eqref{L3} are satisfied, and we have strict inequality for \eqref{L4}. Also, to
avoid previous cases, assume that we have strict inequalities for constraints
\eqref{L1}, \eqref{L6} and \eqref{L7}. 

Then we have the following:
\begin{align*}
\alpha_1' + \alpha_2' + \alpha_3' \quad & < \quad f_c(D_{+}(E))-c^\ast \\
\alpha_1' + \alpha_2' \phantp\phantom{\alpha_3} \quad &  = \quad \lowtol(e_1, e_2) \\
\alpha_1' \phantp\phantom{\alpha_2} + \alpha_3' \quad & = \quad \lowtol(e_1, e_3) \\
\alpha_2' + \alpha_3' \quad & < \quad \lowtol(e_2, e_3) \\
\alpha_1' \phantp\phantom{\alpha_2'} \phantp\phantom{\alpha_3'} \quad & \leq  \quad \lowtol(e_1) \\
\alpha_2' \phantp\phantom{\alpha_3'} \quad & < \quad \lowtol(e_2) \\
\alpha_3' \quad & < \quad \lowtol(e_3)
\end{align*}

Note that if $\alpha_1' = 0$, then 
$\lowtol(e_2) > \alpha_2' = \alpha_1' + \alpha_2' = \lowtol(e_1, e_2)$ holds, which is impossible by Theorem~\ref{bounds_low}. Therefore, $\alpha_1' > 0$ holds.

Then there exists $\varepsilon > 0$ such that
\begin{alignat*}{3}
& \alpha_1' - \varepsilon + \alpha_2' + \varepsilon + \alpha_3' + \varepsilon 
&\quad =\quad& \alpha_1' + \alpha_2' + \alpha_3' + \phantom{2}\varepsilon 
&\quad \leq\quad& f_c(D_{+}(E)) - c^\ast \\
& \alpha_1' - \varepsilon + \alpha_2' + \varepsilon 
&\quad =\quad& \alpha_1' + \alpha_2' 
&\quad =\quad& \lowtol(e_1, e_2) \\
& \alpha_1' - \varepsilon \phantp\phantom{\alpha_2' + \varepsilon} + \alpha_3' + \varepsilon 
&\quad =\quad& \alpha_1' \phantp\phantom{\alpha_2'} + \alpha_3' 
&\quad =\quad& \lowtol(e_1, e_3) \\
& \phantom{\alpha_1' - \varepsilon} \phantp \alpha_2' + \varepsilon + \alpha_3' + \varepsilon 
&\quad =\quad& \phantom{\alpha_1'} \phantp \alpha_2' + \alpha_3' + 2\varepsilon 
&\quad \leq\quad& \lowtol(e_2, e_3) \\
& &\quad \quad& \alpha_1' \phantp\phantom{\alpha_2' + \alpha_3'} - \phantom{2}\varepsilon 
&\quad <\quad& \lowtol(e_1) \\
& &\quad \quad& \phantom{\alpha_1'}\phantp  \alpha_2' \phantp\phantom{\alpha_3'} + \phantom{2}\varepsilon 
&\quad \leq\quad& \lowtol(e_2) \\
& &\quad \quad& \phantom{\alpha_1' + \alpha_2'} \phantp \alpha_3' + \phantom{2}\varepsilon 
&\quad \leq\quad& \lowtol(e_3)
\end{alignat*}

So, $(\alpha_1' - \varepsilon$, $\alpha_2' + \varepsilon$, $\alpha_3' +
\varepsilon)$ satisfies all constraints \eqref{L1}--\eqref{L7}, making it a
feasible solution for the LP. Furthermore, $\alpha_1' - \varepsilon + \alpha_2' +
\varepsilon + \alpha_3' + \varepsilon = \alpha_1' + \alpha_2' + \alpha_3' +
\varepsilon > \alpha_1' + \alpha_2' + \alpha_3' = \alpha'$ holds. Therefore,
$\vv{\alpha'} = (\alpha_{1}', \alpha_{2}', \alpha_{3}')$ is not an optimal
solution of this LP, which is a contradiction. Thus, case \textbf{(6)} is
impossible.

\item[\bf (7)] Constraints \eqref{L5}, \eqref{L6} and \eqref{L7} are satisfied
with equality. 

Then $\alpha' = \lowtol(e_1) + \lowtol(e_2) +
\lowtol(e_3)$ 
and $\lowtol(e_1) + \lowtol(e_2) = \alpha_{1}' + \alpha_{2}' \leq \lowtol(e_1,
e_2)$ holds. Furthermore, by Theorem~\ref{bounds_low}, 
$ \lowtol(e_1, e_2)
\leq
\lowtol(e_1) + \lowtol(e_2) $
follows. 
Thus, $\lowtol(e_1, e_2) = \lowtol(e_1) + \lowtol(e_2)$ holds. 
Therefore, $\alpha' = \lowtol(e_1) + \lowtol(e_2) + \lowtol(e_3) = \lowtol(e_1, e_2) + \lowtol(e_3)$ follows.
\end{itemize}

Thus, $\lowtol(E) = \alpha' \geq V_{1,2,3}$ , 
and $V_{1,2,3} = \lowtol(E)$ holds. \qedhere
\end{proof}

\section{Computational Comparison of the Lower Tolerance LPs}\label{sec_comput}

\subsection{Experimental Setup}\label{sec_comput_setup}
 
We implemented ELL and the recursive variants ILL$_{rec}$/TLL$_{rec}$ from
Remark~\ref{rec_remark} in Python, using \texttt{scipy.optimize.linprog} to
solve the resulting LPs. We evaluated the implementations on three CSPs, namely MSTP, 
APP and LAP.
Throughout, we
assumed all costs to be non-negative.

We implemented the CSP evaluations required to construct the LP constraints (referred to below
as \emph{support calls}) 
separately for each problem class. For
MSTP, we solved the support calls using Kruskal's algorithm~\cite{K56} on graphs with
prescribed included and excluded edges. For SPP, restricted to directed acyclic graphs, we reduced the constrained shortest-path computation to a
single Bellman-Ford~\cite{B58,F56} execution. Specifically, 
we assigned the cost $-(M+1)$ to
each required arc, where $M$ denotes the sum of all arc costs in the graph. This
forces the computed shortest path to contain as many required arcs as possible. The constrained instance is feasible if and only if the
resulting path contains all required arcs. For LAP, we used the Hungarian algorithm~\cite{K55}
(implemented in \texttt{scipy.optimize.linear\_sum\_assignment}) on the corresponding modified
cost matrix.

For MSTP and SPP, we fixed the number of vertices at $n=8$ and varied the number
of edges/arcs from $m=9$ to $m=16$. We regenerated the instances until they
satisfied the required feasibility conditions: connectivity for MSTP and
reachability of the target from the source for SPP, with the additional
requirement for SPP instances to be acyclic. For LAP we varied the matrix size
$n \in \{2,3,4\}$, giving row-column pair sets of size $m = n^2 \in \{4,9,16\}$.

Each edge, arc, or matrix entry cost was drawn independently and uniformly at random from one of the integer ranges $[0,1]$, $[0,10]$, or $[0,100]$.
 
For each combination of problem type, ground set size $m$, and cost range, we
generated 10 random instances. For each instance, we computed the lower
tolerance of every non-empty subset of the edge set using ELL, ILL$_{rec}$ and
TLL$_{rec}$. We recorded the total running time, the time spent inside
\texttt{linprog}, and the remaining time, i.e., the time spent on support calls.

All experiments were performed on an ASUS ZenBook 14 UM425QA laptop
(AMD Ryzen~5 5600H, 6 cores, 3.3 GHz base/4.2 GHz boost,
8 GB LPDDR4X RAM) running Windows~11, using Python~3.12.6
and SciPy~1.16.1.

\subsection{Results}\label{sec_comput_results}

Tables~\ref{tab:mstp-total}, \ref{tab:spp-total} and \ref{tab:lap-total}
report, for each problem type, the average total running time of ELL,
ILL$_{rec}$ and TLL$_{rec}$ over all subsets of the ground set, as a
function of the ground set size $m$ and the cost range.

{
\setlength{\intextsep}{5pt}
\setlength{\abovecaptionskip}{1pt}
\setlength{\belowcaptionskip}{1pt}
\begin{table}[H]
\centering
\caption{Average total running time (seconds) of ELL, recursive ILL and recursive TLL for computing lower tolerances of all subsets of the edge set, MSTP instances, $n=8$ vertices, averaged over 10 runs.}
\label{tab:mstp-total}
\resizebox{\textwidth}{!}{
\begin{tabular}{r rrc rrc rrr}
\toprule
 & \multicolumn{3}{c}{costs in $[0,1]$} & \multicolumn{3}{c}{costs in $[0,10]$} & \multicolumn{3}{c}{costs in $[0,100]$} \\
\cmidrule(lr){2-4}\cmidrule(lr){5-7}\cmidrule(lr){8-10}
$m$ & ELL & ILL$_{rec}$ & TLL$_{rec}$ & ELL & ILL$_{rec}$ & TLL$_{rec}$ & ELL & ILL$_{rec}$ & TLL$_{rec}$ \\
\midrule
9 & 0.731 & 0.550 & 0.543 & 0.733 & 0.547 & 0.545 & 0.735 & 0.552 & 0.544 \\
10 & 1.728 & 1.162 & 1.157 & 1.733 & 1.157 & 1.171 & 1.739 & 1.150 & 1.151 \\
11 & 4.313 & 2.488 & 2.528 & 4.417 & 2.496 & 2.537 & 4.375 & 2.497 & 2.532 \\
12 & 11.388 & 5.499 & 5.702 & 11.412 & 5.531 & 5.747 & 11.587 & 5.475 & 5.696 \\
13 & 33.367 & 14.415 & 14.156 & 31.601 & 12.779 & 13.566 & 31.283 & 12.657 & 13.451 \\
14 & 89.301 & 29.827 & 33.212 & 89.629 & 30.131 & 33.548 & 88.812 & 29.912 & 33.196 \\
15 & 261.623 & 72.701 & 87.026 & 261.031 & 73.858 & 87.662 & 263.468 & 73.419 & 87.632 \\
16 & 772.416 & 187.403 & 244.332 & 773.571 & 188.260 & 240.380 & 780.470 & 190.620 & 240.708 \\
\bottomrule
\end{tabular}
}
\end{table}
}

{
\setlength{\intextsep}{5pt}
\setlength{\abovecaptionskip}{1pt}
\setlength{\belowcaptionskip}{1pt}
\begin{table}[H]
\centering
\caption{Average total running time (seconds) of ELL, recursive ILL and recursive TLL for computing lower tolerances of all subsets of the arc set, SPP instances, $n=8$ vertices, averaged over 10 runs.}
\label{tab:spp-total}
\resizebox{\textwidth}{!}{
\begin{tabular}{r rrc rrr rrr}
\toprule
 & \multicolumn{3}{c}{costs in $[0,1]$} & \multicolumn{3}{c}{costs in $[0,10]$} & \multicolumn{3}{c}{costs in $[0,100]$} \\
\cmidrule(lr){2-4}\cmidrule(lr){5-7}\cmidrule(lr){8-10}
$m$ & ELL & ILL$_{rec}$ & TLL$_{rec}$ & ELL & ILL$_{rec}$ & TLL$_{rec}$ & ELL & ILL$_{rec}$ & TLL$_{rec}$ \\
\midrule
9 & 0.761 & 0.474 & 0.466 & 0.809 & 0.488 & 0.483 & 0.793 & 0.492 & 0.490 \\
10 & 1.828 & 0.969 & 0.963 & 1.872 & 0.987 & 1.001 & 2.006 & 1.008 & 1.009 \\
11 & 4.827 & 2.055 & 2.078 & 4.788 & 2.068 & 2.103 & 4.918 & 2.092 & 2.152 \\
12 & 13.019 & 4.331 & 4.402 & 12.778 & 4.280 & 4.429 & 13.200 & 4.358 & 4.470 \\
13 & 36.532 & 9.421 & 9.878 & 35.679 & 9.338 & 9.571 & 37.407 & 9.649 & 10.859 \\
14 & 104.819 & 20.988 & 22.168 & 106.567 & 21.216 & 22.488 & 105.317 & 21.295 & 23.479 \\
15 & 319.010 & 49.736 & 60.177 & 302.942 & 49.486 & 57.081 & 313.026 & 50.065 & 62.929 \\
16 & 885.758 & 119.227 & 136.521 & 914.742 & 122.041 & 130.205 & 950.586 & 125.823 & 145.985 \\
\bottomrule
\end{tabular}
}
\end{table}
}

{
\setlength{\intextsep}{5pt}
\setlength{\abovecaptionskip}{1pt}
\setlength{\belowcaptionskip}{1pt}
\begin{table}[H]
\centering
\caption{Average total running time (seconds) of ELL, recursive ILL and recursive TLL for computing lower tolerances of all subsets of the row-column pair set, LAP instances, averaged over 10 runs.}
\label{tab:lap-total}
\resizebox{\textwidth}{!}{
\begin{tabular}{r rrr rrr rrr}
\toprule
 & \multicolumn{3}{c}{costs in $[0,1]$} & \multicolumn{3}{c}{costs in $[0,10]$} & \multicolumn{3}{c}{costs in $[0,100]$} \\
\cmidrule(lr){2-4}\cmidrule(lr){5-7}\cmidrule(lr){8-10}
$m$ & ELL & ILL$_{rec}$ & TLL$_{rec}$ & ELL & ILL$_{rec}$ & TLL$_{rec}$ & ELL & ILL$_{rec}$ & TLL$_{rec}$ \\
\midrule
4 & 0.019 & 0.015 & 0.012 & 0.021 & 0.015 & 0.012 & 0.017 & 0.016 & 0.012 \\
9 & 0.645 & 0.527 & 0.548 & 0.648 & 0.533 & 0.559 & 0.651 & 0.534 & 0.561 \\
16 & 283.186 & 130.860 & 261.203 & 292.316 & 132.642 & 270.296 & 286.781 & 133.167 & 278.990 \\
\bottomrule
\end{tabular}
}
\end{table}
}

Tables~\ref{tab:mstp-breakdown}, \ref{tab:spp-breakdown} and
\ref{tab:lap-breakdown} break the same total running time down into time
spent inside \texttt{linprog} (``LP'') and time spent evaluating the
underlying CSP to build each LP's constraints (``supp.''), for costs in
$[0,10]$; as discussed below, the other two cost ranges give essentially
the same times and are omitted for space
constraints.

{
\setlength{\intextsep}{5pt}
\setlength{\abovecaptionskip}{1pt}
\setlength{\belowcaptionskip}{1pt}
\begin{table}[H]
\centering
\caption{Average total, LP-solve and support time (seconds) of ELL, recursive ILL and recursive TLL for computing lower tolerances of all subsets of the edge set, MSTP instances, costs in $[0,10]$, $n=8$ vertices, averaged over 10 runs.}
\label{tab:mstp-breakdown}
\resizebox{\textwidth}{!}{
\begin{tabular}{r rrr rrr rrr}
\toprule
 & \multicolumn{3}{c}{ELL} & \multicolumn{3}{c}{ILL$_{rec}$} & \multicolumn{3}{c}{TLL$_{rec}$} \\
\cmidrule(lr){2-4}\cmidrule(lr){5-7}\cmidrule(lr){8-10}
$m$ & total & LP & supp. & total & LP & supp. & total & LP & supp. \\
\midrule
9 & 0.733 & 0.488 & 0.246 & 0.547 & 0.507 & 0.040 & 0.545 & 0.511 & 0.035 \\
10 & 1.733 & 0.970 & 0.762 & 1.157 & 1.047 & 0.110 & 1.171 & 1.055 & 0.116 \\
11 & 4.417 & 2.018 & 2.399 & 2.496 & 2.196 & 0.300 & 2.537 & 2.210 & 0.327 \\
12 & 11.412 & 4.186 & 7.226 & 5.531 & 4.653 & 0.879 & 5.747 & 4.759 & 0.987 \\
13 & 31.601 & 8.529 & 23.072 & 12.779 & 10.096 & 2.683 & 13.566 & 10.719 & 2.846 \\
14 & 89.629 & 17.759 & 71.870 & 30.131 & 22.207 & 7.925 & 33.548 & 24.849 & 8.699 \\
15 & 261.031 & 37.537 & 223.494 & 73.858 & 49.439 & 24.419 & 87.662 & 60.489 & 27.173 \\
16 & 773.571 & 80.925 & 692.646 & 188.260 & 112.769 & 75.491 & 240.380 & 154.706 & 85.674 \\
\bottomrule
\end{tabular}
}
\end{table}
}

{
\setlength{\intextsep}{5pt}
\setlength{\abovecaptionskip}{1pt}
\setlength{\belowcaptionskip}{1pt}
\begin{table}[H]
\centering
\caption{Average total, LP-solve and support time (seconds) of ELL, recursive ILL and recursive TLL for computing lower tolerances of all subsets of the arc set, SPP instances, costs in $[0,10]$, $n=8$ vertices, averaged over 10 runs.}
\label{tab:spp-breakdown}
\resizebox{\textwidth}{!}{
\begin{tabular}{r rrr rrr rrr}
\toprule
 & \multicolumn{3}{c}{ELL} & \multicolumn{3}{c}{ILL$_{rec}$} & \multicolumn{3}{c}{TLL$_{rec}$} \\
\cmidrule(lr){2-4}\cmidrule(lr){5-7}\cmidrule(lr){8-10}
$m$ & total & LP & supp. & total & LP & supp. & total & LP & supp. \\
\midrule
9 & 0.809 & 0.483 & 0.326 & 0.488 & 0.445 & 0.044 & 0.483 & 0.448 & 0.036 \\
10 & 1.872 & 0.908 & 0.963 & 0.987 & 0.906 & 0.081 & 1.001 & 0.895 & 0.105 \\
11 & 4.788 & 1.846 & 2.942 & 2.068 & 1.802 & 0.266 & 2.103 & 1.859 & 0.244 \\
12 & 12.778 & 3.627 & 9.151 & 4.280 & 3.574 & 0.706 & 4.429 & 3.673 & 0.757 \\
13 & 35.679 & 7.371 & 28.308 & 9.338 & 7.242 & 2.096 & 9.571 & 7.432 & 2.139 \\
14 & 106.567 & 15.018 & 91.549 & 21.216 & 14.843 & 6.373 & 22.488 & 15.774 & 6.714 \\
15 & 302.942 & 30.317 & 272.626 & 49.486 & 30.084 & 19.402 & 57.081 & 36.276 & 20.804 \\
16 & 914.742 & 61.500 & 853.242 & 122.041 & 61.219 & 60.822 & 130.205 & 67.585 & 62.619 \\
\bottomrule
\end{tabular}
}
\end{table}
}

{
\setlength{\intextsep}{5pt}
\setlength{\abovecaptionskip}{1pt}
\setlength{\belowcaptionskip}{1pt}
\begin{table}[H]
\centering
\caption{Average total, LP-solve and support time (seconds) of ELL, recursive ILL and recursive TLL for computing lower tolerances of all subsets of the row-column pair set, LAP instances, costs in $[0,10]$, averaged over 10 runs.}
\label{tab:lap-breakdown}
\resizebox{\textwidth}{!}{
\begin{tabular}{r rrr rrr rrr}
\toprule
 & \multicolumn{3}{c}{ELL} & \multicolumn{3}{c}{ILL$_{rec}$} & \multicolumn{3}{c}{TLL$_{rec}$} \\
\cmidrule(lr){2-4}\cmidrule(lr){5-7}\cmidrule(lr){8-10}
$m$ & total & LP & supp. & total & LP & supp. & total & LP & supp. \\
\midrule
4 & 0.021 & 0.019 & 0.003 & 0.015 & 0.015 & 0.000 & 0.012 & 0.010 & 0.001 \\
9 & 0.648 & 0.501 & 0.147 & 0.533 & 0.496 & 0.037 & 0.559 & 0.521 & 0.038 \\
16 & 292.316 & 72.255 & 220.062 & 132.642 & 71.950 & 60.692 & 270.296 & 183.678 & 86.619 \\
\bottomrule
\end{tabular}
}
\end{table}
}

Figures~\ref{fig:mstp-breakdown}, \ref{fig:spp-breakdown} and \ref{fig:lap-breakdown} show the total, LP-solve and support time side by side for costs in $[0,10]$, on a logarithmic scale, making the growth rate of each component directly visible.

{
\setlength{\intextsep}{5pt}
\setlength{\abovecaptionskip}{1pt}
\setlength{\belowcaptionskip}{1pt}
\begin{figure}[H]
\centering
\begin{tikzpicture}
\begin{semilogyaxis}[
  name=mstp left,
  width=0.37\textwidth, height=0.37\textwidth,
  title={\small total time}, ylabel={time (s)}, ylabel style={font=\small},
  xlabel={edge set size}, xlabel style={font=\small},
  xmin=8.5, xmax=16.5, xtick={9,...,16},
  ymin=0.3, ymax=2000,
  grid=both, grid style={dotted,gray!50},
  tick align=outside, mark size=1.8pt,
  tick label style={font=\scriptsize},
]
\addplot[ellcolor, mark=*, thick, line width=1pt] coordinates{(9,.733)(10,1.733)(11,4.417)(12,11.412)(13,31.601)(14,89.629)(15,261.031)(16,773.571)};
\addplot[illcolor, mark=square*, thick, line width=1pt] coordinates{(9,.547)(10,1.157)(11,2.496)(12,5.531)(13,12.779)(14,30.131)(15,73.858)(16,188.260)};
\addplot[tllcolor, mark=triangle*, thick, line width=1pt] coordinates{(9,.545)(10,1.171)(11,2.537)(12,5.747)(13,13.566)(14,33.548)(15,87.662)(16,240.380)};
\end{semilogyaxis}

\begin{semilogyaxis}[
  name=mstp mid,
  at={($(mstp left.east)+(0.07\textwidth,0)$)}, anchor=west,
  width=0.37\textwidth, height=0.37\textwidth,
  title={\small LP time},
  xlabel={edge set size}, xlabel style={font=\small},
  xmin=8.5, xmax=16.5, xtick={9,...,16},
  ymin=0.3, ymax=500,
  grid=both, grid style={dotted,gray!50},
  tick align=outside, mark size=1.8pt,
  tick label style={font=\scriptsize},
  legend to name=mstp legend,
  legend columns=3,
  legend style={draw=none, column sep=1em, font=\small},
]
\addplot[ellcolor, mark=*, thick, line width=1pt, mark size=2.7pt] coordinates{(9,.488)(10,.970)(11,2.018)(12,4.186)(13,8.529)(14,17.759)(15,37.537)(16,80.925)};
\addlegendentry{ELL}
\addplot[illcolor, mark=square*, thick, line width=1pt] coordinates{(9,.507)(10,1.047)(11,2.196)(12,4.653)(13,10.096)(14,22.207)(15,49.439)(16,112.769)};
\addlegendentry{ILL$_{rec}$}
\addplot[tllcolor, mark=triangle*, thick, line width=1pt] coordinates{(9,.511)(10,1.055)(11,2.210)(12,4.759)(13,10.719)(14,24.849)(15,60.489)(16,154.706)};
\addlegendentry{TLL$_{rec}$}
\end{semilogyaxis}
 
\begin{semilogyaxis}[
  name=mstp right,
  at={($(mstp mid.east)+(0.085\textwidth,0)$)}, anchor=west,
  width=0.37\textwidth, height=0.37\textwidth,
  title={\small support time},
  xlabel={edge set size}, xlabel style={font=\small},
  xmin=8.5, xmax=16.5, xtick={9,...,16},
  ymin=0.01, ymax=2000,
  grid=both, grid style={dotted,gray!50},
  tick align=outside, mark size=1.8pt,
  tick label style={font=\scriptsize},
]
\addplot[ellcolor, mark=*, thick, line width=1pt] coordinates{(9,.246)(10,.762)(11,2.399)(12,7.226)(13,23.072)(14,71.870)(15,223.494)(16,692.646)};
\addplot[illcolor, mark=square*, thick, line width=1pt] coordinates{(9,.040)(10,.110)(11,.300)(12,.879)(13,2.683)(14,7.925)(15,24.419)(16,75.491)};
\addplot[tllcolor, mark=triangle*, thick, line width=1pt] coordinates{(9,.035)(10,.116)(11,.327)(12,.987)(13,2.846)(14,8.699)(15,27.173)(16,85.674)};
 
\end{semilogyaxis}
\node[above=3pt] at (current bounding box.north) {\ref*{mstp legend}};
\end{tikzpicture}
\caption{Average total, LP-solve and support time of ELL, ILL$_{rec}$ and
TLL$_{rec}$ for MSTP, costs in $[0,10]$, as a function of the edge
set size (logarithmic scale).}
\label{fig:mstp-breakdown}
\end{figure}
}

{
\setlength{\intextsep}{5pt}
\setlength{\abovecaptionskip}{1pt}
\setlength{\belowcaptionskip}{1pt}
\begin{figure}[H]
\centering
\begin{tikzpicture}
\begin{semilogyaxis}[
  name=spp left,
  width=0.37\textwidth, height=0.37\textwidth,
  title={\small total time}, ylabel={time (s)}, ylabel style={font=\small},
  xlabel={arc set size}, xlabel style={font=\small},
  xmin=8.5, xmax=16.5, xtick={9,...,16},
  ymin=0.3, ymax=2000,
  grid=both, grid style={dotted,gray!50},
  tick align=outside, mark size=1.8pt,
  tick label style={font=\scriptsize},
]
\addplot[ellcolor, mark=*, thick, line width=1pt] coordinates{(9,.809)(10,1.872)(11,4.788)(12,12.778)(13,35.679)(14,106.567)(15,302.942)(16,914.742)};
\addplot[illcolor, mark=square*, thick, line width=1pt] coordinates{(9,.488)(10,.987)(11,2.068)(12,4.280)(13,9.338)(14,21.216)(15,49.486)(16,122.041)};
\addplot[tllcolor, mark=triangle*, thick, line width=1pt] coordinates{(9,.483)(10,1.001)(11,2.103)(12,4.429)(13,9.571)(14,22.488)(15,57.081)(16,130.205)};
\end{semilogyaxis}
 
\begin{semilogyaxis}[
  name=spp mid,
  at={($(spp left.east)+(0.07\textwidth,0)$)}, anchor=west,
  width=0.37\textwidth, height=0.37\textwidth,
  title={\small LP time},
  xlabel={arc set size}, xlabel style={font=\small},
  xmin=8.5, xmax=16.5, xtick={9,...,16},
  ymin=0.3, ymax=200,
  grid=both, grid style={dotted,gray!50},
  tick align=outside, mark size=1.8pt,
  tick label style={font=\scriptsize},
  legend to name=spp legend,
  legend columns=3,
  legend style={draw=none, column sep=1em, font=\small},
]
\addplot[ellcolor, mark=*, thick, line width=1pt, mark size=2.7pt] coordinates{(9,.483)(10,.908)(11,1.846)(12,3.627)(13,7.371)(14,15.018)(15,30.317)(16,61.500)};
\addlegendentry{ELL}
\addplot[illcolor, mark=square*, thick, line width=1pt] coordinates{(9,.445)(10,.906)(11,1.802)(12,3.574)(13,7.242)(14,14.843)(15,30.084)(16,61.219)};
\addlegendentry{ILL$_{rec}$}
\addplot[tllcolor, mark=triangle*, thick, line width=1pt] coordinates{(9,.448)(10,.895)(11,1.859)(12,3.673)(13,7.432)(14,15.774)(15,36.276)(16,67.585)};
\addlegendentry{TLL$_{rec}$}
\end{semilogyaxis}
 
\begin{semilogyaxis}[
  name=spp right,
  at={($(spp mid.east)+(0.085\textwidth,0)$)}, anchor=west,
  width=0.37\textwidth, height=0.37\textwidth,
  title={\small support time},
  xlabel={arc set size}, xlabel style={font=\small},
  xmin=8.5, xmax=16.5, xtick={9,...,16},
  ymin=0.01, ymax=2000,
  grid=both, grid style={dotted,gray!50},
  tick align=outside, mark size=1.8pt,
  tick label style={font=\scriptsize},
]
\addplot[ellcolor, mark=*, thick, line width=1pt] coordinates{(9,.326)(10,.963)(11,2.942)(12,9.151)(13,28.308)(14,91.549)(15,272.626)(16,853.242)};
\addplot[illcolor, mark=square*, thick, line width=1pt] coordinates{(9,.044)(10,.081)(11,.266)(12,.706)(13,2.096)(14,6.373)(15,19.402)(16,60.822)};
\addplot[tllcolor, mark=triangle*, thick, line width=1pt] coordinates{(9,.036)(10,.105)(11,.244)(12,.757)(13,2.139)(14,6.714)(15,20.804)(16,62.619)};
 
\end{semilogyaxis}
\node[above=3pt] at (current bounding box.north) {\ref*{spp legend}};
\end{tikzpicture}
\caption{Average total, LP-solve and support time of ELL, ILL$_{rec}$ and
TLL$_{rec}$ for the SPP, costs in $[0,10]$, as a function of the arc
set size (logarithmic scale).}
\label{fig:spp-breakdown}
\end{figure}
}

{
\setlength{\intextsep}{5pt}
\setlength{\abovecaptionskip}{1pt}
\setlength{\belowcaptionskip}{1pt}
\begin{figure}[H]
\centering
\begin{tikzpicture}
\begin{semilogyaxis}[
  name=lap left,
  width=0.36\textwidth, height=0.36\textwidth,
  title={\small total time}, ylabel={time (s)}, ylabel style={font=\small},
  xlabel={ground set size}, xlabel style={font=\small},
  xmin=2, xmax=18, xtick={4,9,16},
  ymin=0.005, ymax=600,
  grid=both, grid style={dotted,gray!50},
  tick align=outside, mark size=1.8pt,
  tick label style={font=\scriptsize},
]
\addplot[ellcolor, mark=*, thick, line width=1pt] coordinates{(4,.0215)(9,.6480)(16,292.316)};
\addplot[illcolor, mark=square*, thick, line width=1pt] coordinates{(4,.0149)(9,.5330)(16,132.642)};
\addplot[tllcolor, mark=triangle*, thick, line width=1pt] coordinates{(4,.0119)(9,.5590)(16,270.296)};
\end{semilogyaxis}
 
\begin{semilogyaxis}[
  name=lap mid,
  at={($(lap left.east)+(0.085\textwidth,0)$)}, anchor=west,
  width=0.36\textwidth, height=0.36\textwidth,
  title={\small LP time},
  xlabel={ground set size}, xlabel style={font=\small},
  xmin=2, xmax=18, xtick={4,9,16},
  ymin=0.005, ymax=600,
  grid=both, grid style={dotted,gray!50},
  tick align=outside, mark size=1.8pt,
  tick label style={font=\scriptsize},
  legend to name=lap legend,
  legend columns=3,
  legend style={draw=none, column sep=1em, font=\small},
]
\addplot[ellcolor, mark=*, thick, line width=1pt, mark size=2.7pt] coordinates{(4,.0188)(9,.5010)(16,72.255)};
\addlegendentry{ELL}
\addplot[illcolor, mark=square*, thick, line width=1pt] coordinates{(4,.0147)(9,.4960)(16,71.950)};
\addlegendentry{ILL$_{rec}$}
\addplot[tllcolor, mark=triangle*, thick, line width=1pt] coordinates{(4,.0105)(9,.5210)(16,183.678)};
\addlegendentry{TLL$_{rec}$}
\end{semilogyaxis}
 
\begin{semilogyaxis}[
  name=lap right,
  at={($(lap mid.east)+(0.085\textwidth,0)$)}, anchor=west,
  width=0.36\textwidth, height=0.36\textwidth,
  title={\small support time},
  xlabel={ground set size}, xlabel style={font=\small},
  xmin=2, xmax=18, xtick={4,9,16},
  ymin=0.00005, ymax=600,
  grid=both, grid style={dotted,gray!50},
  tick align=outside, mark size=1.8pt,
  tick label style={font=\scriptsize},
]
\addplot[ellcolor, mark=*, thick, line width=1pt] coordinates{(4,.0027)(9,.1470)(16,220.062)};
\addplot[illcolor, mark=square*, thick, line width=1pt] coordinates{(4,.0002)(9,.0370)(16,60.692)};
\addplot[tllcolor, mark=triangle*, thick, line width=1pt] coordinates{(4,.0014)(9,.0380)(16,86.619)};
 
\end{semilogyaxis}
\node[above=3pt] at (current bounding box.north) {\ref*{lap legend}};
\end{tikzpicture}
\caption{Average total, LP-solve and support time of ELL, ILL$_{rec}$ and
TLL$_{rec}$ for the LAP, costs in $[0,10]$, as a function of the row-column pair
set size (logarithmic scale).}
\label{fig:lap-breakdown}
\end{figure}
}

\subsection{Discussion}\label{sec_comput_discussion}

The observed running-time growth is broadly consistent with the CSP-count
analysis after Remark~\ref{rec_remark}. For ELL, the ratio of running times at
successive values of $m$ approaches $2.96$ for MSTP and $3.02$ for SPP, close to
the factor of $3$ predicted by its $3^m$ CSP count. The corresponding ratio for
ILL$_{rec}$ is $2.54$ (MSTP) / $2.47$ (SPP), and for TLL$_{rec}$ is $2.74$ /
$2.28$, respectively, clearly above the factor of $2$ implied by their $2^m$ CSP
counts because the LPs themselves become marginally more expensive as $m$
increases. 

The CSP-count analysis is nevertheless a poor predictor of the magnitude of the
practical speedup. At $m=16$, the theoretical ratio $3^m/2^m = 1.5^m \approx
657$, whereas the observed reduction in support time is only about $9$ times for
MSTP and $14$ times for SPP. The reason is that ELL's additional CSP solves are
not representative for the average time consumption: for each subset $E$, ELL also solves
CSPs for every $F \subseteq E$, most of which are comparatively small and
therefore fast to solve. By contrast, the recursive formulations solve one CSP per
subset, but always for the current full set $E$, making each solve time-consuming. 

For most tested instances, ELL is dominated by the support time (75--93\% of total running time), while ILL$_{rec}$ and TLL$_{rec}$ remain more evenly split between LP solving and support time. Interestingly, for MSTP, ELL requires the least time inside the LP solver. For the largest MSTP instances, ELL spends only about 81 seconds in \texttt{linprog}, compared with 113 seconds for ILL$_{rec}$ and 155 seconds for TLL$_{rec}$. Thus, at least for some CSPs, ELL can yield LP formulations that are easier to solve than those generated by ILL or TLL. 

The impact of LP-solving time is also evident in the comparison between ILL$_{rec}$ and TLL$_{rec}$, where ILL$_{rec}$ was consistently faster in our experiments. The difference is modest for MSTP and SPP (about 30\% and 10\%, respectively, at $m=16$), but substantial for LAP, where TLL$_{rec}$ requires almost the same total running time as ELL and roughly twice that of ILL$_{rec}$. In each case, the gap arises primarily from the time spent inside \texttt{linprog}, suggesting that the LPs generated by TLL are generally harder for the solver. While we do not presently have an explanation for such behavior, the results indicate that ILL$_{rec}$ is the preferred recursive formulation from a running-time perspective.

TLL$_{rec}$ nevertheless offers one practical advantage. ILL$_{rec}$ caches the values $f_c(D_{+}(F)) - c^\ast$, whereas TLL$_{rec}$ directly caches the lower tolerance values. Consequently, the TLL$_{rec}$ cache is self-contained and can answer future tolerance queries without requiring additional LP solving, making it attractive in applications where all tolerances of subsets are to be retained for later use. 

Finally, the magnitude of the costs had no consistent effect on running time. Across all three problem classes, the differences between the cost ranges $[0,1]$, $[0,10]$, and $[0,100]$ were small.
This is perhaps surprising for the $[0,1]$ range: such
instances typically admit multiple optimal solutions, which could in principle increase the number of active constraints in the LPs
and thus the solving time.
Instances drawn from $[0,10]$ or $[0,100]$ are more likely to
have unique optimal solutions.
That no consistent timing difference is visible suggests that the LP
solver handles the two cases similarly at the sizes we tested, though
the effect may become more pronounced for larger edge sets.

The source code used to perform the experiments is available on GitHub at \url{https://github.com/dmpanasenko/SetTolerance}.

\section{Applications to the Minimum Spanning Tree Problem}\label{sec5}

\subsection{Single Tolerances}\label{sec5_1}

In the following we present exact formulas for single tolerances
for MSTP.

\begin{theorem}
\label{MST_single1}
Let $\prob = (\elems, D, c, f_c)$ be an MSTP, $e \in \elems$ and $T^\ast$ be
an MST of $\prob$.
Then the following statements hold:
\begin{itemize}
\item[\bf {(a)}]
$\uptol(e)\! = \!
\begin{cases}
\displaystyle 
\!\min_{e' \in \elems \, : \,
e \in P_{T^\ast}(e')}\! \{c(e')\} \!-\! c(e), 
& \!\!\!\text{if } e\! \in\! E(T^\ast) \\
\hfill 0, \hfill
& \!\!\!\text{if } e\! \not\in\! E(T^\ast) 
  \! \wedge \!
  \displaystyle \!\!\!\max_{e' \in P_{T^\ast}(e)} \!\{c(e')\} \!=\! c(e) \\
\hfill \infty, \hfill
& \!\!\!\text{if } e\! \not\in\! E(T^\ast) 
  \! \wedge \!
  \displaystyle \!\!\!\max_{e' \in P_{T^\ast}(e)} \!\{c(e')\} \!<\! c(e)
\end{cases}$

\item[\bf {(b)}]
$\lowtol(e) =
\begin{cases}
\hfill 0, \hfill & \text{if } e \in E(T^\ast) \\
\displaystyle c(e) - \max_{e' \in P_{T^\ast}(e)} \{c(e')\}, & \text{otherwise}
\end{cases}$
\end{itemize}
\end{theorem}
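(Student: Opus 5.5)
The plan is to reduce both parts to the general formulas of Theorem~\ref{single_upp} and Theorem~\ref{single_low}, and then evaluate the quantities $f_c(D_{+}(e))$ and $f_c(D_{-}(e))$ for MSTP using the cycle/cut structure of $T^\ast$. The tool for this is Theorem~\ref{MST_opt}, together with the standard exchange argument behind Theorem~\ref{MST_lemma}. Throughout, $c^\ast = f_c(T^\ast)$.

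\textbf{Part (b).} If $e \in E(T^\ast)$, then $f_c(D_{+}(e)) = c^\ast$, so $\lowtol(e) = 0$ by Theorem~\ref{single_low}. If $e \notin E(T^\ast)$, I will show that
\[
f_c(D_{+}(e)) = c^\ast + c(e) - \max_{e' \in P_{T^\ast}(e)} \{c(e')\}.
\]
The upper bound comes from the exchange tree $T^\ast - e'' + e$, where $e''$ is a maximum-cost edge on $P_{T^\ast}(e)$. For the lower bound I use Remark~\ref{low_tol_old_def} together with Theorem~\ref{MST_lemma}(b). Decreasing $c(e)$ by $\alpha \le c(e) - \max c(e')$ keeps $T^\ast$ optimal, so the optimal value stays $c^\ast$. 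Decreasing by more makes $T^\ast$ non-optimal, and since $T^\ast$ is not optimal the optimum drops strictly below $c^\ast$. Hence the lower tolerance equals $c(e) - \max_{e' \in P_{T^\ast}(e)} c(e')$, and this is nonnegative by Theorem~\ref{MST_opt}.

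\textbf{Part (a), case $e \in E(T^\ast)$.} Here $f_c(D_{+}(e)) = c^\ast$, so Theorem~\ref{single_upp} gives $\uptol(e) = f_c(D_{-}(e)) - c^\ast$. I must show that the best spanning tree avoiding $e$ costs $c^\ast + \min\{c(e') : e \in P_{T^\ast}(e')\} - c(e)$.

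Removing $e$ splits $T^\ast$ into two components. The non-tree edges $e'$ crossing this cut are exactly those with $e \in P_{T^\ast}(e')$. Any tree avoiding $e$ must contain such a crossing edge. The tree $T^\ast - e + e'_{\min}$ attains the claimed value.

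The lower bound $f_c(D_{-}(e)) \ge$ this value is the main obstacle. I would get it from Theorem~\ref{MST_lemma}(a) via Remark~\ref{upp_tol_old_def}. Raising $c(e)$ by $\alpha$ up to $c(e'_{\min}) - c(e)$ keeps $T^\ast$ optimal, so every tree avoiding $e$, whose cost is unchanged by the raise, costs at least $c^\ast + \alpha$. Taking $\alpha$ equal to the bound yields the inequality.

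If no crossing edge exists, then $e$ is a bridge. In that case $D_{-}(e) = \emptyset$, and both sides are $\infty$ with the convention $\min \emptyset = \infty$.

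\textbf{Part (a), case $e \notin E(T^\ast)$.} By Theorem~\ref{MST_opt}, $\max_{e' \in P_{T^\ast}(e)} c(e') \le c(e)$, so exactly the two listed subcases occur.

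If equality holds, the exchange $T^\ast - e'' + e$, with $e''$ a maximum-cost path edge, is an optimal tree containing $e$. Thus $f_c(D_{+}(e)) = c^\ast$, and Theorem~\ref{single_upp} gives $\uptol(e) = f_c(D_{-}(e)) - c^\ast = 0$, because $T^\ast$ itself avoids $e$.

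If the inequality is strict, I claim that no MST contains $e$, so $f_c(D_{+}(e)) > c^\ast$ and $\uptol(e) = \infty$. By the part (b) computation, $f_c(D_{+}(e)) - c^\ast = c(e) - \max c(e') > 0$.
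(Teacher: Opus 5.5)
Your proposal is correct and follows essentially the paper's route. The tree-edge case of (a) and the non-tree case of (b) rest on Theorem~\ref{MST_lemma} together with Remarks~\ref{upp_tol_old_def} and~\ref{low_tol_old_def}, and the equality subcase of (a) uses the same exchange $T^\ast - e'' + e$. Beyond that, you supply exchange-based upper bounds and handle the bridge case explicitly, and in the strict subcase of (a) you reuse (b) to get $f_c(D_{+}(e)) - c^\ast = \lowtol(e) > 0$; the paper instead gives a direct cut argument, via Theorem~\ref{MST_opt}, that no spanning tree containing $e$ is minimal.
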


\begin{proof}
\phantom{}
\begin{itemize}
\item[\bf (a)] Consider the first case, i.e., 
let $e \in E(T^\ast)$. Let $e' \not\in  E(T^\ast)$ be the edge where
the minimum is attained. By Remark~\ref{upp_tol_old_def}, $\uptol(e)$ is the maximum increase in $c(e)$ such that $T^\ast$ remains an MST. By Theorem~\ref{MST_lemma}(a), such an increase equals $c(e') - c(e)$.

Now consider the second case, i.e., $e \not\in E(T^\ast)$ and $\exists \; e' \in
P_{T^\ast}(e) : c(e') = c(e)$. Then replacing $e'$ with $e$ in $T^\ast$ gives a
spanning tree $T'$ with $f_c(T') = f_c(T^\ast)$. So it holds that $f_c(D_{+}(e))
= f_c(D_{-}(e)) = c^\ast$. By Theorem~\ref{single_upp}, it follows that
$\uptol(e) = 0$.

Now consider the third case, i.e., $e \not\in E(T^\ast)$ and $\forall \; e' \in
P_{T^\ast}(e) : c(e') < c(e)$. Let $T'$ be an arbitrary spanning tree with $e
\in
E(T')$. Removing $e$ from $T'$ disconnects the tree into two parts.
Then there is an edge $ e' \in P_{T^\ast}(e)$ that connects these two parts again.
For such $e'$ we have $e' \not\in E(T')$, $e \in P_{T'}(e')$ and $c(e') < c(e)$.
Then, by Theorem~\ref{MST_opt}, $T'$ is not an MST of $\prob$. Therefore,
$f_c(D_{+}(e)) > f_c(D_{-}(e)) = c^\ast$ holds. By Theorem~\ref{single_upp}, it
holds that $\uptol(e) = \infty$.

By Theorem~\ref{MST_opt}, the case $\displaystyle e \not\in E(T^\ast) \wedge \max_{e' \in P_{T^\ast}(e)} \{c(e')\} > c(e)$ is impossible.

\item[\bf (b)] Consider the first case, i.e., 
let $e \in E(T^\ast)$. Then $f_c(D_{+}(e)) = c^\ast$. By Theorem~\ref{single_low}, it follows that $\lowtol(e) = 0$.

Now consider the second case, i.e., let $e \not\in E(T^\ast)$. Let $e' \in
E(T^\ast)$ be the edge, where the maximum is attained. By
Remark~\ref{low_tol_old_def}, $\lowtol(e)$ is the maximum decrease in $c(e)$
such that $T^\ast$ remains an MST. By Theorem~\ref{MST_lemma}(b) such a decrease
equals $c(e) - c(e')$. \qedhere
\end{itemize}
\end{proof}

\subsection{Set Upper Tolerances}\label{sec5_2}

We will need the following lemma.

\begin{lemma}
\label{mst_upper_sum_of_singles}
Let $\prob = (\elems, D, c,f_c) $ be an MSTP, $E = \{e_1, e_2,  \ldots, e_k\} \subseteq \elems$ and $T^\ast$ be an
MST of $\prob$. Then $T^\ast$ is also an MST of $\prob_{c_{\vv{\alpha}, E}}$, where $\vv{\alpha} =
(\alpha_1, \alpha_2, \ldots, \alpha_k)$ and $\alpha_i = \uptol(e_i) \; \forall
\; i \in [k]$.
\end{lemma}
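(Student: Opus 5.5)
The plan is to verify the optimality criterion of Theorem~\ref{MST_opt} directly for $T^\ast$ under the modified cost function $c' := c_{\vv{\alpha}, E}$. Here $c'(e_i) = c(e_i) + \uptol(e_i)$ for $e_i \in E$ and $c'(e) = c(e)$ otherwise. The values $\uptol(e_i)$ will be taken from Theorem~\ref{MST_single1}(a), computed with respect to this fixed MST $T^\ast$. If some $\uptol(e_i) = \infty$, I interpret the statement as holding for every finite increase of that element (equivalently, for an arbitrarily large cost). I expect this convention to be the only genuinely delicate point, and it is handled together with the case analysis below.

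Since spanning trees are unchanged and only costs move, it suffices to show the following: for every non-tree edge $g \notin E(T^\ast)$ and every $e \in P_{T^\ast}(g)$ we have $c'(e) \le c'(g)$. First observe that all $\alpha_i \ge 0$, hence $c'(g) \ge c(g)$ for every non-tree edge $g$ (for $g \in E$ the increase is $0$ or $\infty$ by the second and third cases of Theorem~\ref{MST_single1}(a); otherwise it is $0$). So it remains to bound $c'(e)$ for tree edges $e$ on $P_{T^\ast}(g)$. I split into two cases.

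If $e \notin E$, then $c'(e) = c(e) \le c(g) \le c'(g)$, using Theorem~\ref{MST_opt} for the original optimality of $T^\ast$. If $e = e_i \in E \cap E(T^\ast)$, then by the first case of Theorem~\ref{MST_single1}(a),
\[
\uptol(e_i) = \min_{e' \,:\, e_i \in P_{T^\ast}(e')} \{c(e')\} - c(e_i) \le c(g) - c(e_i),
\]
because $g$ is one of the edges over which the minimum is taken ($e_i$ lies on $P_{T^\ast}(g)$). Hence $c'(e_i) \le c(g) \le c'(g)$. In particular, in this case $\uptol(e_i)$ is finite.

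Regarding infinite tolerances: a tree edge $e_i$ with $\uptol(e_i) = \infty$ lies on no path $P_{T^\ast}(g)$, so it never appears in any inequality to be checked. A non-tree edge with infinite tolerance only makes the right-hand side $c'(g)$ larger. Thus all inequalities of Theorem~\ref{MST_opt} hold for $c'$, and $T^\ast$ is an MST of $\prob_{c_{\vv{\alpha}, E}}$.
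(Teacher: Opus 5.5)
Your proposal is correct and follows essentially the paper's argument: you verify the cycle criterion of Theorem~\ref{MST_opt} for $T^\ast$ under the raised costs, using $c(e)+\uptol(e)=\min\{c(\bar g)\}\le c(g)$ for tree edges $e\in E$ on $P_{T^\ast}(g)$ and $c'(g)\ge c(g)$ for non-tree edges $g$. Your two cases merge the paper's four, and your use of $\le$ (rather than the paper's strict $<$ in its last case, which fails when $\uptol(g)=0$) and your explicit handling of infinite tolerances are slightly more careful than the original.
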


\begin{proof}
Consider a non-tree edge $g \not\in E(T^\ast)$ and a tree edge $e \in P_{T^\ast}(g)$. By Theorem~\ref{MST_opt} $c(e) \leq c(g)$. The following cases are possible:
\begin{itemize}
\item[$\bullet$] $e, g \not\in E$. Then $c_{\vv{\alpha}, E}(e) = c(e) \leq c(g) = c_{\vv{\alpha}, E}(g)$.

\item[$\bullet$] $e \not\in E, g \in E$. Then $c_{\vv{\alpha}, E}(e) = c(e) \leq c(g) \leq c_{\vv{\alpha}, E}(g)$.    

\item[$\bullet$] $e \in E, g \not\in E$. Then $c_{\vv{\alpha}, E}(e) = c(e) + \uptol(e) = \displaystyle \min_{\bar{g} \not\in E(T^\ast)} \big\{c(\bar{g}) \,\big|\, e \in P_{T^\ast}(\bar{g}) \big\} \leq c(g) = c_{\vv{\alpha}, E}(g)$. 

\item[$\bullet$] $e,g \in E$. Then $c_{\vv{\alpha}, E}(e) = c(e) + \uptol(e) = \displaystyle \min_{\bar{g} \not\in E(T^\ast)} \big\{c(\bar{g}) \,\big|\, e \in P_{T^\ast}(\bar{g}) \big\} \leq c(g) < c_{\vv{\alpha}, E}(g)$. 
\end{itemize}

So, $\forall\; e \in P_{T^\ast}(g) : c_{\vv{\alpha}, E}(e) \leq c_{\vv{\alpha}, E}(g)$. By Theorem~\ref{MST_opt}, $T^\ast$ is an MST of~$\prob_{c_{\vv{\alpha}, E}}$. \qedhere
\end{proof}

The following theorem gives a lower bound for the set upper tolerance
for MSTP
and follows easily from 
Lemma~\ref{mst_upper_sum_of_singles}.

\begin{theorem}
\label{mst_upper_sum_of_singles_thm}
Let $\prob = (\elems, D, c,f_c) $ be an MSTP 
and $E = \{e_1, e_2,  \ldots, e_k\} \subseteq \elems$. Then it holds that $
\sum_{i=1}^{k} \uptol(e_i)
\leq  \uptol(E) $.
\end{theorem}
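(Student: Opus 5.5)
The plan is to apply the definition of $\uptol(E)$ directly, with the single vector $\vv{\alpha}=(\uptol(e_1),\ldots,\uptol(e_k))$ supplied by Lemma~\ref{mst_upper_sum_of_singles}. The key observation is that, under the generalized definitions from~\cite{JT26}, the single tolerances $\uptol(e_i)$ do not depend on any particular optimal solution. So the same vector can serve as the witness for every optimal solution simultaneously.

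First I dispose of the infinite case. If $\uptol(e_i)=\infty$ for some $i\in[k]$, then the left-hand side is $\infty$. Theorem~\ref{bounds_upp} then gives $\uptol(E)\ge \max_{j}\uptol(e_j)=\infty$, so the inequality holds trivially. I handle this case separately rather than through Lemma~\ref{mst_upper_sum_of_singles}, because adding an infinite amount to a cost is not a legitimate choice of $\vv{\alpha}$ in $\R^k$.

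Next, assume all $\uptol(e_i)$ are finite, and set $\alpha_i:=\uptol(e_i)\ge 0$ and $\alpha:=\sum_{i=1}^k\alpha_i$. Let $T^\ast$ be an arbitrary optimal solution, i.e.\ an arbitrary MST of $\prob$. Lemma~\ref{mst_upper_sum_of_singles} holds for every MST $T^\ast$, so $T^\ast$ is also an MST of $\prob_{c_{\vv{\alpha},E}}$. The vector $\vv{\alpha}$ is the same for every $T^\ast$, so the condition in the definition of $\uptol(E)$ holds for each optimal solution with this one $\vv{\alpha}$. Hence $\alpha$ belongs to the set whose maximum defines $\uptol(E)$, and $\sum_{i=1}^k\uptol(e_i)=\alpha\le\uptol(E)$.

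The main point to check is that Lemma~\ref{mst_upper_sum_of_singles} is genuinely valid for every MST, and not only for one fixed MST used to compute the single tolerances. In the lemma's proof, the value $\uptol(e)$ for a tree edge $e$ is expressed via Theorem~\ref{MST_single1}(a) relative to the same $T^\ast$. This is consistent because $\uptol(e)$ is well defined independently of $T^\ast$ by Theorem~\ref{single_upp}, so the formula gives the same value for any MST. Once this is noted, the rest is immediate.
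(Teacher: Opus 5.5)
Your proof is correct and is essentially the paper's argument: the paper says the theorem follows directly from Lemma~\ref{mst_upper_sum_of_singles}. That lemma shows that the vector $\alpha_i=\uptol(e_i)$ keeps every MST optimal, so its sum is admissible in the definition of $\uptol(E)$; your separate treatment of $\uptol(e_i)=\infty$ via Theorem~\ref{bounds_upp} and your remark that this witness vector does not depend on the chosen MST make explicit two points the paper leaves implicit.
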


MSTP instance $\prob = (\elems, D, c,f_c)$ from
Fig.~\ref{MST_examp_fig1}
shows that Theorem~\ref{mst_upper_sum_of_singles_thm} provides only a lower bound.
The bold edges $\{\textup{v}_1, \textup{v}_2\},
\{\textup{v}_2, \textup{v}_3\}$ and $\{\textup{v}_3, \textup{v}_4\}$ form an MST
$T^\ast$ with $f_c(T^\ast) = 4$.
Consider the set of red edges $E = \big\{ \{\textup{v}_1, \textup{v}_3\},$
$\{\textup{v}_2, \textup{v}_3\}, \{\textup{v}_2, \textup{v}_4\} \big\}$. There
are three distinct MSTs of~$\prob$, each containing the two bold black edges
$\{\textup{v}_1, \textup{v}_2\}$ and $\{\textup{v}_3, \textup{v}_4\}$ together
with one red edge. Hence, the single upper tolerance of each edge from $E$ is
$0$. 

We now compute the set upper tolerance of $E$. 
Increasing the cost of each red edge by 2 preserves 
all three MSTs of $\prob$ as optimal solutions.
Any further increase would force the thin black edge 
$\{\textup{v}_1, \textup{v}_4\}$ to appear in an MST.
Therefore, $\uptol(E) = 3 \cdot 2 = 6$ holds. 
This value is also confirmed by solving EUL. 

{
\setlength{\intextsep}{5pt}
\setlength{\abovecaptionskip}{1pt}
\setlength{\belowcaptionskip}{1pt}
\begin{figure}[H]
\centering
\begin{tikzpicture}[
    xscale=0.7, yscale=0.7,
    vertex/.style={circle, draw=black, very thick, fill=white,
                   minimum size=15pt, inner sep=0pt},
    edgethin/.style={line width=0.9pt, draw=black},
    edgebold/.style={line width=3.8pt},
    redthin/.style={edgethin, draw=red!80!black},
    redbold/.style={edgebold, draw=red!80!black},
    w/.style={inner sep=1.5pt, font=\sffamily\normalsize},
    v/.style={inner sep=1.5pt, font=\large}
]

\coordinate (A) at (0.0,0.0);  
\coordinate (B) at (2.0,2.5);  
\coordinate (C) at (5.0,2.5); 
\coordinate (D) at (7.0,0.0);  

\draw[edgebold] (A) -- node[w, above left] {1} (B);
\draw[edgebold] (C) -- node[w, above left, xshift=10pt] {1} (D);
\draw[redbold] (B) -- node[w, above ] {2} (C);
\draw[redthin] (A) -- node[w, above left] {2} (C);
\draw[redthin] (B) -- node[w, above left, yshift=1pt, xshift=5pt] {2} (D);
\draw[edgethin] (A) -- node[w, above left, yshift=1pt] {4} (D);

\node[vertex, v] (NA) at (A) {$\textup{v}_1$};
\node[vertex, v] (NB) at (B) {$\textup{v}_2$};
\node[vertex, v] (NC) at (C) {$\textup{v}_3$};
\node[vertex, v] (ND) at (D) {$\textup{v}_4$};

\end{tikzpicture}

\caption{Undirected weighted graph with $4$ vertices,
$6$ edges and one of MSTs in bold face; edges with single upper tolerance equal to $0$ are given in red.}
\label{MST_examp_fig1} 
\end{figure}
}

\subsection{Set Lower Tolerances}\label{sec5_3}

We will need the following lemma.

\begin{lemma}
\label{mst_lower_sum_of_singles}
Let $\prob = (\elems, D, c,f_c) $ be an MSTP, $E = \{e_1, e_2, \ldots, e_k\} \subseteq \elems$ and $T^\ast$ be an
MST of $\prob$. 
Then  it holds that $f_{c_{-\vv{\alpha}, E}}(\prob_{c_{-\vv{\alpha}, E}}) = f_c(T^\ast)$, where $\vv{\alpha} =
(\alpha_1, \alpha_2, \ldots, \alpha_k)$ and $\alpha_i = \lowtol(e_i) \; \forall
\; i \in [k]$.
\end{lemma}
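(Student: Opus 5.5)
Mirroring the proof of Lemma~\ref{mst_upper_sum_of_singles}, I will show that $T^\ast$ remains an MST of $\prob_{c_{-\vv{\alpha}, E}}$ by verifying the criterion of Theorem~\ref{MST_opt}. Since tree edges have single lower tolerance $0$ by Theorem~\ref{MST_single1}(b), their costs do not change. Hence $f_{c_{-\vv{\alpha}, E}}(T^\ast) = f_c(T^\ast)$, and optimality of $T^\ast$ for the modified instance gives exactly the claimed equality of objective values.

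By Theorem~\ref{MST_single1}(b), $\lowtol(e)=0$ for every $e\in E\cap E(T^\ast)$. For $g\in E\setminus E(T^\ast)$,
\begin{align*}
c_{-\vv{\alpha},E}(g) = c(g)-\lowtol(g)=\max_{e'\in P_{T^\ast}(g)}\{c(e')\}.
\end{align*}
In particular, all tree edges keep their original costs in $\prob_{c_{-\vv{\alpha}, E}}$, and the paths $P_{T^\ast}(g)$ are unchanged because $T^\ast$ is the same tree.

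Fix a non-tree edge $g$ and an edge $e\in P_{T^\ast}(g)$. I will distinguish two cases. If $g\notin E$, then $c_{-\vv{\alpha},E}(g)=c(g)\ge c(e)=c_{-\vv{\alpha},E}(e)$ by Theorem~\ref{MST_opt} applied to $T^\ast$ in $\prob$. If $g\in E$, then $c_{-\vv{\alpha},E}(g)=\max_{e'\in P_{T^\ast}(g)}c(e')\ge c(e)=c_{-\vv{\alpha},E}(e)$. In both cases the condition of Theorem~\ref{MST_opt} holds for the modified costs, so $T^\ast$ is an MST of $\prob_{c_{-\vv{\alpha}, E}}$, and its cost is $f_c(T^\ast)$.

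The only point needing care is that decreases on different non-tree edges might interact. They do not, because the criterion of Theorem~\ref{MST_opt} compares each non-tree edge only with tree edges, and tree costs are untouched. Thus the single decreases can be applied simultaneously. I do not expect any real obstacle beyond stating this observation explicitly.
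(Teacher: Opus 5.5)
Your proposal is correct and follows essentially the same route as the paper. Like the paper, you use Theorem~\ref{MST_single1}(b) to see that tree edges keep their costs while non-tree edges $g\in E$ drop exactly to $\max_{e'\in P_{T^\ast}(g)} c(e')$, and then you verify the criterion of Theorem~\ref{MST_opt} with the same two cases $g\notin E$ and $g\in E$; your explicit remark that $f_{c_{-\vv{\alpha},E}}(T^\ast)=f_c(T^\ast)$ spells out a step the paper leaves implicit.
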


\begin{proof}
Consider a non-tree edge $g \not\in E(T^\ast)$ and a tree edge $e \in P_{T^\ast}(g)$. By Theorem~\ref{MST_opt} $c(e) \leq c(g)$. Note that by Theorem~\ref{MST_single1}(b) $\lowtol(e) = 0$, so $c_{-\vv{\alpha}, E}(e) = c(e)$. The following cases are possible:
\begin{itemize}
\item[$\bullet$] $g \not\in E$. Then $c_{-\vv{\alpha}, E}(e) = c(e) \leq c(g) = c_{-\vv{\alpha}, E}(g)$.

\item[$\bullet$] $g \in E$. Then $c_{-\vv{\alpha}, E}(e) = c(e) \leq \displaystyle\max_{\bar{e} \in P_{T^\ast}(g)} \{c(\bar{e})\} = c(g) - \lowtol(g) = c_{-\vv{\alpha}, E}(g)$.
\end{itemize}

So, $\forall\; e \in P_{T^\ast}(g) : c_{-\vv{\alpha}, E}(e) \leq c_{-\vv{\alpha}, E}(g)$. Then by Theorem~\ref{MST_opt}, $T^\ast$ is an MST of
$\prob_{c_{-\vv{\alpha}, G}}$. Thus, $f_{c_{-\vv{\alpha},
G}}(\prob_{c_{-\vv{\alpha}, G}}) = f_c(T^\ast)$ follows. \qedhere
\end{proof}

In the following we present an exact formula for the set lower tolerance.

\begin{theorem}
\label{MST_lower}
Let $\prob = (\elems, D, c,f_c) $ be an MSTP 
and $E = \{e_1, e_2,  \ldots, e_k\} \subseteq \elems$. Then it holds that $
\sum_{i=1}^{k} \lowtol(e_i)
=  \lowtol(E) $.
\end{theorem}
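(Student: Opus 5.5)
The proof is a sandwich between two inequalities, each supplied by an earlier result.

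For the upper bound we use Theorem~\ref{bounds_low}, which holds for every CSP and in particular for MSTP:
\begin{align*}
\lowtol(E) \leq \sum_{i=1}^k \lowtol(e_i).
\end{align*}
Equivalently, this is Theorem~\ref{part_thm_low} applied to the partition of $E$ into singletons. So only the reverse inequality needs an argument.

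For the lower bound we fix an MST $T^\ast$ of $\prob$ and set $\vv{\alpha} = (\alpha_1, \ldots, \alpha_k)$ with $\alpha_i = \lowtol(e_i)$. We first dispose of the degenerate case in which some $\lowtol(e_i) = \infty$. Then both sides are infinite, since $\lowtol(E) \geq \max_i \lowtol(e_i)$ by Theorem~\ref{bounds_low}, and there is nothing to prove.

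Assume now that all $\alpha_i$ are finite. Each $\alpha_i$ is nonnegative. Lemma~\ref{mst_lower_sum_of_singles} shows that decreasing the cost of each $e_i$ by $\alpha_i$ leaves the optimal objective value unchanged, i.e.,
\begin{align*}
f_{c_{-\vv{\alpha}, E}}(\prob_{c_{-\vv{\alpha}, E}}) = f_c(T^\ast) = c^\ast.
\end{align*}
Hence $\vv{\alpha}$ is admissible in the maximum defining $\lowtol(E)$, and this maximum is at least $\sum_{i=1}^k \alpha_i = \sum_{i=1}^k \lowtol(e_i)$. Combining this with the upper bound gives equality.

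The only point needing care is the exact form of the conclusion of Lemma~\ref{mst_lower_sum_of_singles}: it must give the optimal value exactly $c^\ast$, and not merely that $T^\ast$ stays optimal. Here this is immediate. The tree edges in $E$ have $\lowtol = 0$ by Theorem~\ref{MST_single1}(b), so $f_{c_{-\vv{\alpha},E}}(T^\ast) = f_c(T^\ast)$, and $T^\ast$ remains optimal by the lemma. No further obstacle is expected.
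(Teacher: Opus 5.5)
Your proposal is correct and matches the paper's proof: both sandwich $\lowtol(E)$, with $\sum_{i=1}^k \lowtol(e_i)$ as a lower bound via Lemma~\ref{mst_lower_sum_of_singles} and as an upper bound via Theorem~\ref{bounds_low}. Your explicit treatment of the case where some $\lowtol(e_i) = \infty$, and your check that the lemma yields optimal value exactly $c^\ast$, are small refinements the paper leaves implicit.
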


\begin{proof}
By Lemma~\ref{mst_lower_sum_of_singles}, $
\sum_{i=1}^{k} \lowtol(e_i)
\leq 
\lowtol(E) $. On the other hand, by Theorem~\ref{bounds_low}, it holds that 
$\lowtol(E) \leq \sum_{i=1}^{k} \lowtol(e_i)$. 
Therefore, $\lowtol(E) =
\sum_{i=1}^{k} \lowtol(e_i)$ holds.\qedhere
\end{proof}

\section{Conclusions and Future Work}
\label{sec6}

This work introduces LP formulations
for the computation of set tolerances,
and proves bounds, exact formulas for small sets
of cardinality $2$ and  $3$, and recursive approaches. 
The practical computability of set tolerances depends strongly on the underlying CSP and its
structural properties.

We suggest future work in this 
direction as follows. 

\begin{itemize}
\item[$\bullet$]
Compute exactly set upper tolerances
for MSTP (the example of  
Fig.~\ref{MST_examp_fig1} suggests that
this is much more advanced than
for set lower tolerances).

\item[$\bullet$] 
Compute bounds and exact values of set tolerances
for SPP, LAP
and further 
polynomially time solvable
CSPs.
\item[$\bullet$] 
Use the computation of set tolerances
in heuristics to $\mathcal{NP}$-hard CSPs such as the Traveling Salesman Problem, as similar was
done for single tolerances~\cite{GGT12,Hel00,JDGMR14}.
\end{itemize}

\bibliography{references_full.bib}

@article{JT26,
    Author = {J\"ager, G. and Turkensteen, M.},
    Title = {Extending the definition of single and set tolerances},
    Journal = {Operations Research Letters},
    Volume = {65},
    Pages = {107407},
    Year = {2026}
}

@article{JT24,
    Author = {J\"ager, G. and Turkensteen, M.},
    Title = {Assessing the effect of multiple cost changes using reverse set tolerances},
    Journal = {Discrete Applied Mathematics},
    Volume = {354},
    Pages = {279--300},
    Year = {2024}
}

@article{T82,
    Author = {Tarjan, R. E.},
    Title = {Sensitivity analysis of minimum spanning trees and shortest path trees},
    Journal = {Information Processing Letters},
    Volume = {14},
    Number = {1},
    Pages = {30--33},
    Year = {1982}
}

@article{GJM06,
    Author = {Goldengorin, B. and J\"ager, G. and Molitor, P.},
    Title = {Tolerances applied in combinatorial optimization},
    Journal = {Journal of Computer Science},
    Volume = {2},
    Number = {9},
    Pages = {716--734},
    Year = {2006}
}

@article{JT18,
    Author = {J\"ager, G. and Turkensteen, M.},
    Title = {Extending single tolerances to set tolerances},
    Journal = {Discrete Applied Mathematics},
    Volume = {247},
    Pages = {197--215},
    Year = {2018}
}

@article{Hel00,
    Author = {Helsgaun, K.},
    Title = {An effective implementation of the {L}in-{K}ernighan {T}raveling {S}alesman heuristic},
    Journal = {European Journal of Operations Research},
    Volume = {126},
    Number = {1},
    Pages = {106--130},
    Year = {2000}
}

@article{CH78,
    Author = {Chin, F. and Houck, D.},
    Title = {Algorithms for updating minimal spanning trees},
    Journal = {Journal of Computer and System Sciences},
    Volume = {16},
    Pages = {334--344},
    Year = {1978}
}

@article{TMGP17,
    Author = {Turkensteen, M. and Malyshev, D. and Goldengorin, B. and Pardalos, P. M.},
    Title = {The reduction of computation times of upper and lower tolerances for selected combinatorial optimization problems},
    Journal = {Journal of Global Optimization},
    Volume = {68},
    Pages = {601--622},
    Year = {2017}
}

@Book{Cac03,
    Author  = {Cacuci, D. G.},
    Title   = {Sensitivity and uncertainty analysis},
    Year    = {2003},
    Volume  = {I},
    Publisher  = {Chapman \& Hall/CRC}
}

@InProceedings{DJRM09,
  author    = {Dong, C. and J\"ager, G. and Richter, D. and Molitor, P.},
  title     = {Effective Tour Searching for {TSP} by Contraction of Pseudo Backbone Edges},
  booktitle = {Proc.~5th International Conference on Algorithmic Aspects in
  Information and Management, AAIM 2009, Lecture Notes in Computer Science, vol.~5564},
  year      = {2009},
  editor    = {Goldberg, A. V. and Zhou, Y.},
  pages     = {175-187.},
  publisher = {Springer},
}

@article{JT25,
    Author = {J\"ager, G. and Turkensteen, M.},
    Title = {Computation of lower tolerances of combinatorial bottleneck problems}, 
    Journal = {Discrete Optimization}, 
    Volume = {56},
    Pages = {100887},
    Year = {2025}
}

@Article{GGT12,
    Author = {Germs, R. and Goldengorin, B. and Turkensteen, M.},
    Title = {Lower Tolerance-Based Branch and Bound Algorithms for the {ATSP}},
    Journal = {Computers and Operations Research},
    Volume = {39},
    Number = {2},
    Pages = {291--298},
    Year = {2012}
}

@InCollection{GGGJ08,
    Author = {Ghosh, D. and Goldengorin, B. and Gutin, G. and J\"ager, G},
    Title = {Tolerance-based Algorithms for the {T}raveling {S}alesman {P}roblem},
    Booktitle = {Mathematical Programming and Game Theory for Decision Making},
    Publisher = {World Scientific, New Jersey},
    Year = {2008},
    Editor = {Neogy, S. K. and Bapat, R. B. and Das, A. K. and Parthasarathy, T.},
    Pages = {47--59}
}

@Article{JDGMR14,
    Author = {J\"ager, G. and Dong, C. and Goldengorin, B. and Molitor, P. and Richter, D.},
    Title = {A Backbone Based {TSP} Heuristic for Large Instances},
    Journal = {Journal of Heuristics},
    Year = {2014},
    Volume = {20},
    Number = {1},
    Pages = {107--124}
}

@Book{CCPS97,
    Author = {Cook, W. J. and Cunningham, W. H. and Pulleyblank, W. R. and Schrijver, A.},
    Title = {Combinatorial optimization},
    Publisher = {John Wiley \& Sons, Ltd},
    Year = {1997}  
}

@Article{K56,
    Author = {Kruskal, J. B.},
    Title = {On the shortest spanning subtree of a graph and the {T}raveling {S}alesman {P}roblem},
    Journal = {Proceedings of the American Mathematical Society},
    Volume = {7},
    Number = {1},
    Pages = {48--50},
    Year = {1956}
}

@Article{K55,
    Author = {Kuhn, H. W.},
    Title = {The {H}ungarian method for the assignment problem},
    Journal = {Naval Research Logistics Quarterly},
    Volume = {2},
    Pages = {83--97},
    Year = {1955}
}

@Article{B58,
    Author = {Bellman, R.},
    Title = {On a routing problem},
    Journal = {Quarterly of Applied Mathematics},
    Volume = {16},
    Pages = {87--90},
    Year = {1958}
}

@book{F56,
  author    = {Ford, L. R.},
  title     = {Network flow theory},
  publisher = {The RAND Corporation},
  year      = {1956},
  pages     = {P-923},
  address   = {Santa Monica, CA}
}

\end{document}